\newcommand{\be}{\begin{equation}}
\newcommand{\ee}{\end{equation}}
\newcommand{\beq}{\begin{eqnarray}}
\newcommand{\eeq}{\end{eqnarray}}
\newtheorem{thm}{Theorem}[section]
\newtheorem{lma}{Lemma}[section]
\newtheorem{cor}{Corollary}[section]
\newtheorem{defn}{Definition}[section]
\theoremstyle{remark}
\newtheorem{rem}{Remark}[section]
\numberwithin{equation}{section}
\def\dps{\displaystyle}
\def\be{\begin{equation}}
\def\ee{\end{equation}}
\def\bee{\begin{equation*}}
\def\eee{\end{equation*}}
\def\ol{\overline}
\def\lf{\left}
\def\ri{\right}
\def\K{K\"ahler }
\def\KR{K\"ahler-Ricci }
\def\Ric{\text{\rm Ric}}
\def\Rm{\text{\rm Rm}}
\def\wt{\widetilde}
\def\p{\partial}
\def\ddbar{\partial\bar\partial}
\def\ol{\overline}
\def\heat{\lf(\frac{\p}{\p t}-\Delta\ri)}
\def\tr{\operatorname{tr}}
\def\e{\epsilon}
\def\a{{\alpha}}
\def\ijb{{i\bar{j}}}
\def\ii{\sqrt{-1}}
\def\R{\mathbb{R}}
\def\C{\mathbb{C}}
\begin{document}
\title[]
{Chern-Ricci flows on noncompact complex manifolds}

 \author{Man-Chun Lee}
\address[Man-Chun Lee]{Department of
 Mathematics, The Chinese University of Hong Kong, Shatin, Hong Kong, China.}
\email{mclee@math.cuhk.edu.hk}

\author{Luen-Fai Tam$^1$}
\address[Luen-Fai Tam]{The Institute of Mathematical Sciences and Department of
 Mathematics, The Chinese University of Hong Kong, Shatin, Hong Kong, China.}
 \email{lftam@math.cuhk.edu.hk}
\thanks{$^1$Research partially supported by Hong Kong RGC General Research Fund \#CUHK 14305114}

\renewcommand{\subjclassname}{
  \textup{2010} Mathematics Subject Classification}
\subjclass[2010]{Primary 32Q15; Secondary 53C44
}

\date{August, 2017}

\begin{abstract} In this work, we obtain existence criteria for Chern-Ricci flows on noncompact manifolds. We generalize a result by Tossati-Wienkove \cite{TosattiWeinkove2015} on Chern-Ricci flows to  noncompact manifolds and a result for \KR flows by Lott-Zhang \cite{LottZhang2011} to Chern-Ricci flows. Using the existence results, we prove that any complete noncollapsed \K metric with nonnegative bisectional curvature on a  noncompact complex manifold can be deformed to a complete \K metric with nonnegative and {\it bounded} bisectional curvature which will have maximal volume growth if the initial metric has maximal volume. Combining this result with \cite{ChauTam2006}, we give another proof that a complete noncompact \K manifold with nonnegative bisectional curvature (not necessarily bounded) and maximal volume growth is biholomorphic to $\C^n$. This last result has already been proved by Liu \cite{Liu2017} recently using other methods. This last result is partial confirmation of a uniformization conjecture of Yau \cite{Yau1991}.

\end{abstract}

\keywords{Chern-Ricci flow, K\"ahler manifold, holomorphic bisectional curvature, uniformization}

\maketitle

\markboth{Man-Chun Lee and Luen-Fai Tam}{Chern-Ricci flows on noncompact manifolds}
\section{introduction}\label{s-intro}

Let $(M^n,g_0)$ be a Hermitian manifold with complex dimension $n$. Let $\theta_0$ be the   associated real $(1,1)$ form. In holomorphic coordinates $(z^1,\dots,z^n)$, $g_0$ is given by $(g_0)_\ijb$ and
$$
\theta_0=\ii(g_0)_\ijb dz^i\wedge d\bar z_j.
$$
Even though $g_0$ is not \K in general, we still call $\theta_0$ to be the \K form of $g_0$.

 The following flow, which was called the Chern-Ricci flow by Tosatti-Weinkove \cite{TosattiWeinkove2015}, was first studied by Gill \cite{Gill2011}: starting at $\theta_0$ is given by
\begin{equation}\label{e-Gill}
\left\{
  \begin{array}{ll}
  \dps{ \frac{\partial\theta}{\partial t}}=&-\Ric(\theta)   \\
     \theta(0)=&\theta_0,
  \end{array}
\right.
\end{equation}
where $\Ric(\theta)$ is the Chern-Ricci form of $\theta$.  For a Hermitian metric with \K form $\theta$,
 $\Ric(\theta)=-\sqrt{-1}\partial\bar\partial \log (\det g)$ in   holomorphic coordinates. We also denote $\Ric(\theta)$ to be $\Ric(g)$.  For the definitions and basic properties on Chern connection, torsion of the connection and curvature of the connection, see \cite{TosattiWeinkove2015} or Appendix \ref{chern connection appendix}.

If initially $g_0$ is K\"ahler, then the Chern-Ricci flow coincides with the Ricci flow in which the K\"ahlerity is preserved. In fact, if $g_0$ is \K on an open set $U$ of $M$, then $g(t)$, which is the Hermitian metric corresponding to $\theta(t)$ of the solution to the Chern-Ricci flow, is also \K on $U$.
Unlike Ricci flow, Chern-Ricci flow will   preserve the Hermitian condition. Thus it is expected that the Chern-Ricci flow reveals information about the structure of $M$ as a complex manifold.

Existence, longtime existence and behaviors on compact Hermitian manifolds have been investigated by Gill \cite{Gill2011},   Tosatti, Weinkove, Fang, Yang and other people \cite{FangTosattiWeinkoveZheng2016,TosattiWeinkove2013,TosattiWeinkove2015,TosattiWeinkoveYang2015}. In this paper, we will discuss existence of Chern-Ricci flow on noncompact complex manifolds.

 In this work, connection always means the Chern connection, and curvature is the curvature with respect to the Chern connection unless specified otherwise. Also we use $[0,S]$ for example instead of $[0,T]$ to denote a  time interval because we want to reserve $T$ to denote the torsion.

In \cite{TosattiWeinkove2015}, Tosatti-Weinkove obtained the following criteria of existence time.
\begin{thm}\label{t-TosattiWeinkove} Let $(M^n,g_0)$ be a compact Hermitian manifold. Let $S_{A}$ be the supremum of $S>0$ so that the Chern-Ricci flow \eqref{e-Gill} has a solution $g(t)$ with initial data $g_0$ on $M\times[0,S]$. Let $S_{B}$ be defined by $$S_B=\sup\{ t\geq 0:\, \exists u\in C^\infty(M),\; \theta_0-tRic(\theta_0)+\sqrt{-1}\partial\bar\partial u>0\}.$$
Then $S_A=S_B$.
\end{thm}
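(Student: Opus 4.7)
The plan is to reduce the tensor-valued flow \eqref{e-Gill} to a scalar parabolic complex Monge--Amp\`ere equation and then invoke the standard a priori estimates program. Set $\hat{\theta}_t:=\theta_0-t\,\Ric(\theta_0)$ and look for a solution of the form $\theta(t)=\hat{\theta}_t+\sqrt{-1}\partial\bar\partial\varphi(t)$. Using $\Ric(\theta)=-\sqrt{-1}\partial\bar\partial\log\det g$, one checks that \eqref{e-Gill} is equivalent to
\begin{equation*}
\frac{\partial\varphi}{\partial t}=\log\frac{(\hat{\theta}_t+\sqrt{-1}\partial\bar\partial\varphi)^n}{\theta_0^n},\qquad \varphi(0)=0,
\end{equation*}
subject to the ellipticity condition $\hat{\theta}_t+\sqrt{-1}\partial\bar\partial\varphi>0$, by the standard identity $\Ric(\theta)-\Ric(\theta_0)=-\sqrt{-1}\partial\bar\partial\log(\det g/\det g_0)$.

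For the inclusion $S_A\le S_B$, I would take any $S<S_A$ and integrate the flow in the form $\partial_t\theta=-\Ric(\theta_0)+\sqrt{-1}\partial\bar\partial\log(\det g(t)/\det g_0)$ in $t$. This gives $\theta(S)=\hat{\theta}_S+\sqrt{-1}\partial\bar\partial\varphi(S)$ with $\varphi(S)=\int_0^S\log(\det g(s)/\det g_0)\,ds\in C^\infty(M)$, and since $\theta(S)>0$ the function $u:=\varphi(S)$ exhibits $S\in\{t\geq 0:\exists v\in C^\infty(M),\ \hat{\theta}_t+\sqrt{-1}\partial\bar\partial v>0\}$. Taking the supremum over $S<S_A$ yields $S_A\le S_B$.

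The reverse inclusion $S_B\le S_A$ is the substantive direction. Fix $0<S<S_B$ and choose $S'\in(S,S_B)$ with an admissible $u\in C^\infty(M)$ satisfying $\hat{\theta}_{S'}+\sqrt{-1}\partial\bar\partial u>0$. Note that $\hat{\theta}_t = (1-t/S')\theta_0 + (t/S')\hat{\theta}_{S'}$, so the interpolated form $\hat{\theta}_t + (t/S')\sqrt{-1}\partial\bar\partial u$ is strictly positive on $[0,S']$ and provides a useful lower barrier. Short-time existence (Gill \cite{Gill2011}) gives a smooth solution on a maximal interval $[0,T_*)$ with $T_*>0$; it suffices to show that if $T_*\le S$ then $\varphi$ enjoys uniform $C^\infty$ bounds up to $t=T_*$, contradicting maximality. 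The chain of estimates I would pursue is: (i) a $C^0$ bound on $\varphi$ via the maximum principle applied to $\varphi-(t/S')u$, using concavity of $\log\det$ and the AM--GM inequality; (ii) bounds on $\dot\varphi$ from its evolution equation together with the barrier $\varphi-(t/S')u$; (iii) a second-order estimate on $\tr_{\hat{g}}g$ obtained from a maximum principle computation on a quantity of the form $\log\tr_{\hat{g}}g-A\varphi$ of Aubin--Yau--Cherrier type; and (iv) higher-order estimates via the parabolic Evans--Krylov theorem and Schauder bootstrapping.

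The principal difficulty is step (iii): in the Hermitian (non-K\"ahler) setting the torsion of the Chern connection of the reference metric $\hat{g}_t$ contributes bad first- and third-order error terms in the evolution of $\tr_{\hat{g}}g$ that have no analogue in the K\"ahler--Ricci flow. Handling them requires a refined auxiliary function (following the strategy of Cherrier and Tosatti--Weinkove for the elliptic Hermitian Monge--Amp\`ere equation), together with absorption of the bad terms using the uniform $C^0$ bound on $\varphi$ and the strict positivity buffer provided by $u$ on $[0,S']\supsetneq[0,T_*]$. Once (iii) is established, the remaining arguments are routine and the proof concludes.
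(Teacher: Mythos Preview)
This theorem is quoted from Tosatti--Weinkove \cite{TosattiWeinkove2015} and is not re-proved in the present paper; it serves as motivation for the noncompact analogues. Your outline is an accurate summary of their argument, and the a priori estimates the paper develops in Section~\ref{s-estimates} for the noncompact case (Lemmas \ref{l-psi}, \ref{l-trace}, \ref{l-psi-est}, \ref{l-trace-est-1}) are modeled directly on it.

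One point is worth sharpening. In step (iii) you propose the quantity $\log\tr_{\hat g}g - A\varphi$ and then defer the handling of torsion to an unspecified ``refined auxiliary function''. In the actual Tosatti--Weinkove argument (visible in the paper's Lemma \ref{l-trace-est-1}) the reference metric is the \emph{fixed} $g_0$ rather than the moving $\hat g_t$, and the test function is $\log\tr_{g_0}g + A\phi^{-1}$, where $\phi$ is a positive shift of the combination $(S'-t)\dot\varphi+\varphi+nt$ minus a multiple of $u$. The point of $A\phi^{-1}$ is that under $(\partial_t-\Delta)$ it contributes a good term $-2A\phi^{-3}|\partial\phi|^2_g$; at a spatial maximum one has $\partial(\log\tr_{g_0}g)=A\phi^{-2}\partial\phi$, and substituting this into the bad first-order torsion term (the term $\mathrm{I}$ in Lemma \ref{l-trace}(i)) allows it to be absorbed by the good gradient term plus a lower-order remainder. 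This is the parabolic form of Cherrier's trick and is exactly the missing specificity in your sketch; the naive $\log\tr - A\varphi$ does not close when the reference metric has torsion.
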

This generalizes a result for \KR flows on compact  manifolds by  Tian-Zhang \cite{TianZhang2006} to the Chern-Ricci flows. The result by Tian-Zhang was also generalized in another direction: Namely to \KR flow on complete noncompact \K manifolds. First note that if $M$ is compact then for $u$ be as in the definition of $S_B$, we actually have:
$$
\beta'\theta_0\ge\theta_0-tRic(\theta_0)+\sqrt{-1}\partial\bar\partial u\ge \beta\theta_0
$$
for some $\beta, \beta'>0$. Moreover, all the derivatives of $u$ with respect to $g_0$ are bounded. In \cite{LottZhang2011}, Lott-Zhang proved the following:

\begin{thm}\label{t-LottZhang}
 Let $(M^n,g_0)$ be a complete noncompact \K manifold with bounded curvature and with \K form $\theta_0$. Let $S_A$  be the supremum   of the numbers $S$ such that the \KR flow has a solution $g(t)$ with initial data $g_0$ and with uniformly bounded curvature on $M\times[0,S]$.
Let $S_B$  be the supremum   of the numbers $S$  for which there is a
function $u\in C^\infty(M)$ such that

(i)
$$
 \theta_0-tRic(\theta_0)+\sqrt{-1}\partial\bar\partial u\ge \beta\theta_0;
$$
for some $\beta>0$; and

(ii) for each $k$, the $k$-th covariant derivatives of $u$ with respect to $g_0$ are uniformly bounded.

Then
$$S_A=S_B.$$

\end{thm}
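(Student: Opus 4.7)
My plan is to establish both inequalities $S_A\le S_B$ and $S_A\ge S_B$, paralleling the proof of Theorem~\ref{t-TosattiWeinkove} and its compact K\"ahler predecessor of Tian-Zhang, but replacing compactness by the bounded-geometry conditions built into the definitions of $S_A$ and $S_B$.

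For the direction $S_A\le S_B$, suppose the \KR flow $g(t)$ exists on $M\times[0,S]$ with uniformly bounded curvature. Since $g_0$ is K\"ahler and the flow preserves K\"ahlerity, I would write
\[
\theta(t)=\theta_0-t\Ric(\theta_0)+\ii\ddbar\varphi(t),\qquad \varphi(0)=0,
\]
with $\varphi$ solving $\p_t\varphi=\log\bigl((\theta_0-t\Ric(\theta_0)+\ii\ddbar\varphi)^n/\theta_0^n\bigr)$. Setting $u:=\varphi(S)$, condition (i) follows because uniformly bounded curvature on the finite interval $[0,S]$ forces $\theta(S)\ge\beta\theta_0$ for some $\beta>0$, and the identity $\ii\ddbar u=\theta(S)-\theta_0+S\Ric(\theta_0)$ recasts this as the claim. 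The higher covariant derivatives of $u$ then reduce to higher derivatives of $g(t)-g_0$, which are controlled by Shi-type derivative estimates for the \KR flow on a bounded-geometry background; the first-order bound on $\nabla u$ is handled by integrating the evolution equation and using elliptic regularity for $g_0$.

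The substantive direction is $S_A\ge S_B$. Fix $S<S_B$, pick $S_0\in(S,S_B)$ and a function $u$ at time $S_0$ satisfying (i) and (ii). The convex-combination identity
\[
\hat\theta(t):=\theta_0-t\Ric(\theta_0)+\ii\ddbar\lf(\frac{t}{S_0}u\ri)=\lf(1-\frac{t}{S_0}\ri)\theta_0+\frac{t}{S_0}\bigl[\theta_0-S_0\Ric(\theta_0)+\ii\ddbar u\bigr]
\]
shows that $\hat\theta(t)$ is uniformly equivalent to $\theta_0$ on $[0,S]$, with uniformly bounded covariant derivatives of all orders with respect to $g_0$. Writing $\theta(t)=\hat\theta(t)+\ii\ddbar\varphi$ reduces \eqref{e-Gill} to the scalar parabolic complex Monge-Amp\`ere equation
\[
\p_t\varphi=\log\frac{(\hat\theta(t)+\ii\ddbar\varphi)^n}{\theta_0^n}-\frac{u}{S_0},\qquad \varphi(0)=0.
\]
I would solve this equation on an exhaustion of $M$ by smooth relatively compact domains with Dirichlet data agreeing with the background potential, obtain a priori estimates independent of the domain, and extract a smooth solution on $M\times[0,S]$ by a diagonal argument; the resulting \KR flow $\theta(t)=\hat\theta(t)+\ii\ddbar\varphi$ will then be shown to have uniformly bounded curvature via the $C^2$-estimate on $\varphi$ combined with Shi's local derivative estimates.

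The principal obstacle is the package of uniform a priori estimates on the exhausting domains. Without compactness one cannot rely on global integration, so the $C^0$ and $C^2$ bounds must be extracted from maximum principle arguments with barriers built from $u$ itself: the uniform positivity of $\hat\theta(t)$ gives both upper and lower barriers, and a Yau-Aubin style second-order estimate applied to $\tr_{g_0}(\hat\theta(t)+\ii\ddbar\varphi)$, together with the $C^0$ bound on $\p_t\varphi$ from the equation, should yield a uniform $C^2$ bound on $\varphi$ depending only on $S$, the $g_0$-bounded-geometry constants, and the derivative bounds on $u$. Once these are in hand, parabolic Schauder theory on a bounded-geometry background produces all higher-order bounds, and Shi-type local derivative estimates on the limiting metric close the argument by furnishing the uniform curvature bound required for membership in the set defining $S_A$.
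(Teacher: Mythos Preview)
This theorem is quoted from Lott--Zhang \cite{LottZhang2011} as background and is not proved in the paper; there is therefore no proof here to compare against directly. The paper's own contribution is the analogous criterion for Chern--Ricci flow under weaker hypotheses (Theorems~\ref{t-existence-intro-1} and~\ref{t-existence-1}), whose proof in Section~\ref{s-existence} does supply a template one could specialize to the setting of Theorem~\ref{t-LottZhang}.

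Your outline for the easy direction $S_A\le S_B$ is fine and matches what the paper dismisses in one line. For the substantive direction $S_A\ge S_B$, however, your route---solving Dirichlet problems for the parabolic Monge--Amp\`ere equation on an exhaustion by bounded domains---diverges from the paper's method and introduces boundary-regularity and compatibility-of-data issues that are nowhere addressed. The paper instead avoids boundaries altogether: it conformally modifies $g_0$ on exhausting sublevel sets $U_{\rho_0}$ to \emph{complete} Hermitian metrics $h_0=e^{2F}g_0$ of bounded geometry of infinite order (Lemmas~\ref{l-exhaustion-1}--\ref{l-boundedgeom}), invokes global short-time existence on $(U_{\rho_0},h_0)$ via Lemma~\ref{l-shortime-1}, obtains uniform $C^0$ and trace bounds from Lemmas~\ref{l-psi-est}--\ref{l-trace-est-1} by maximum-principle arguments in which $u$ enters only as a barrier (so only $\sup|u|$ is used, not its higher derivatives), and then lets $\rho_0\to\infty$ using the local higher-order estimates of \cite{ShermanWeinkove2013}. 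In the specific bounded-curvature K\"ahler setting of Theorem~\ref{t-LottZhang} one could bypass even the conformal trick, since Shi's theorem already starts the flow on all of $M$; the a priori estimates you sketch (your convex-combination reference form $\hat\theta(t)$ and the second-order trace estimate) are then exactly the right ingredients to extend it. Packaging them through a Dirichlet problem is a detour that neither the paper nor Lott--Zhang take.
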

Observe that by (ii) in the above and the fact that $g_0$ has bounded curvature, we also have
$$
\theta_0-tRic(\theta_0)+\sqrt{-1}\partial\bar\partial u\le \beta'\theta_0
$$
for some $\beta'>0$.

In this work, we want to generalize Theorem \ref{t-TosattiWeinkove} by Tosatti-Weinkove to noncompact manifolds and Theorem \ref{t-LottZhang} by Lott-Zhang   to Chern-Ricci flow. Before we state our result, we make the following definition.
\begin{defn}\label{d-bisectional}
Let $(M^n,g_0)$ be a Hermitian manifold. Let $R$ be the curvature tensor with respect to the Chern connection. $g_0$ is said to have bisectional curvature bounded below by $-K$, and will be denoted by $\mathrm{BK}(g_0)\ge -K$ if at any point and for any unitary frame, $R_{i\bar ij\bar j}\ge -K$. Here $i$ may be equal to $j$. The bisectional curvature of $g_0$ is bounded above by $K$ is defined similarly.
\end{defn}

Let $(M^n,g_0)$ be a complete noncompact Hermitian manifold. Let $S_{A}$ be the supremum of $S>0$ so that the Chern-Ricci flow \eqref{e-Gill}  has a solution $g(t)$ with initial data $g_0$ and $g(t)$ is uniformly equivalent to $g_0$ on $M\times[0,S]$. Let $S_{B}$ be the supremum of $S>0$ such that there is a smooth bounded function $u$ and $\beta>0$ such that
\bee
\theta_0-S\Ric(\theta_0)+\ii\ddbar u\ge \beta\theta_0
\eee
where $\theta_0$ is the \K form of $g_0$. We obtain the following:

\begin{thm}\label{t-existence-intro-1}
Let $(M^n,g_0)$ be a complete noncompact Hermitian manifold with torsion $T$ (which is zero if $g_0$ is K\"ahler). Assume the followings:
\begin{enumerate}
  \item [(i)]  $|T|_{g_0}$ and $ |\bar\p T|_{g_0}$ are uniformly bounded;
  \item [(ii)] the bisectional curvature of $g_0$ is uniformly bounded from below: $\mathrm{BK}(g_0)\ge -K$; and
  \item [(iii)] there exists a smooth real function $\rho$ which is uniformly equivalent to the distance function from a fixed point such that $|\p\rho|_{g_0}, |\ddbar \rho|_{g_0}$ are uniformly bounded.
\end{enumerate}
 Then $S_{A}=S_{B}$.
\end{thm}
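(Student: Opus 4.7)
The plan is to establish the two inequalities $S_A \le S_B$ and $S_B \le S_A$ separately, following the strategy of Tosatti--Weinkove and Lott--Zhang but with new ingredients to handle simultaneously the noncompactness and the nontrivial torsion. The reduction in both directions is to the parabolic complex Monge--Amp\`ere equation obtained by writing $\theta(t) = \hat\theta_t + \ii\ddbar\varphi(t)$ with $\hat\theta_t := \theta_0 - t\,\Ric(\theta_0)$; then the Chern--Ricci flow is equivalent to
\begin{equation*}
\frac{\p \varphi}{\p t} = \log\frac{(\hat\theta_t + \ii\ddbar\varphi)^n}{\theta_0^n}, \qquad \varphi(0) = 0.
\end{equation*}

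For $S_A \le S_B$, suppose the flow exists on $[0,S]$ with $g(t)$ uniformly equivalent to $g_0$. Then the potential $\varphi(t)$ above is well defined and $\p_t\varphi$ is uniformly bounded on $M\times[0,S]$; since $\varphi(0)=0$, $\varphi(t)$ itself is uniformly bounded on $M\times[0,S']$ for every $S'<S$. Because $\hat\theta_{S'} + \ii\ddbar\varphi(S') = \theta(S') \ge \beta\theta_0$ for some $\beta>0$ by uniform equivalence, the function $u := \varphi(S')$ witnesses $S'\le S_B$, and letting $S'\to S$ gives $S_A\le S_B$.

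The direction $S_B \le S_A$ is where the substance lies. Fix $S<S_B$ with a bounded $u$ and $\beta>0$ satisfying $\theta_0 - S\,\Ric(\theta_0) + \ii\ddbar u \ge \beta\theta_0$. I would exhaust $M$ by relatively compact open sets $\Omega_k$ with smooth boundary and solve on each $\Omega_k\times[0,S]$ the corresponding Dirichlet problem for the parabolic Monge--Amp\`ere equation above, with initial and boundary data chosen compatibly with $u$; existence and short-time regularity of the resulting $\varphi_k$ follow from standard parabolic theory on bounded domains. The argument then rests on obtaining a priori estimates for $\varphi_k$ that are uniform in $k$, so that Arzel\`a--Ascoli recovers a solution on all of $M\times[0,S]$ in the limit, and this solution is uniformly equivalent to $g_0$ by the same estimates, giving $S\le S_A$.

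The needed estimates, in increasing difficulty, are a $C^0$ bound on $\varphi_k$, a bound on $\p_t\varphi_k$, a two-sided bound on $\tr_{g_0} g_k(t)$ yielding uniform equivalence $g_k(t)\sim g_0$, and higher regularity via parabolic Evans--Krylov and Schauder theory. Each bound is produced by applying the maximum principle (or an Omori--Yau-type argument) to an auxiliary quantity built from $\varphi_k$ and the distance-like function $\rho$ from hypothesis~(iii), with $\rho$ playing the role usually served by compactness. The hardest step is the $C^2$-estimate: one considers a quantity of the form $\log\tr_{g_0}g_k - A\varphi_k - B\rho$, whose evolution under the Chern Laplacian introduces commutator terms involving the torsion $T$ and $\bar\p T$, together with curvature terms that must be compared with the bisectional curvature of $g_0$. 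Hypotheses~(i) and~(ii) are precisely what is needed to absorb these error terms into the negative leading term coming from the Monge--Amp\`ere structure, while hypothesis~(iii) keeps the barrier-related terms $\ddbar\rho$ and $|\p\rho|_{g_0}$ controlled. This torsion-corrected maximum-principle computation on a noncompact domain is the main obstacle; once it is carried out, the remaining higher-order estimates and passage to the limit are largely standard.
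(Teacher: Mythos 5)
Your handling of $S_A\le S_B$ and your list of required estimates match the paper, but the core of your construction for $S_B\le S_A$ --- solving Dirichlet problems for the parabolic complex Monge--Amp\`ere equation on a compact exhaustion $\Omega_k$ and passing to the limit --- has a genuine gap. Every a priori bound you invoke ($C^0$, $\p_t\varphi_k$, the trace bound) is a maximum-principle argument, and on $\overline{\Omega}_k$ the maximum may be attained on $\p\Omega_k$, where nothing is controlled: the term $-B\rho$ is bounded on $\Omega_k$ and cannot push the maximum into the interior (its role in the noncompact setting is only to guarantee that a supremum is attained, which is automatic on a compact closure). To run your scheme you would need boundary estimates, uniform in $k$, for the Dirichlet problem --- and even its solvability with the required regularity is not ``standard parabolic theory'': second-order boundary estimates for complex Monge--Amp\`ere type equations require barriers/boundary-convexity hypotheses and uniform geometric control near $\p\Omega_k$, which is unavailable here because $g_0$ is not assumed to have bounded curvature. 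The paper avoids the boundary altogether by Hochard's conformal trick: on the sublevel set $U_{\rho_0}$ of $\rho$ it replaces $g_0$ by $h_0=e^{2F}g_0$, which is complete with bounded geometry of infinite order (Lemma \ref{l-boundedgeom}) and still satisfies (i)--(iii) and condition {\bf(a2)} with constants independent of $\rho_0$ (Lemma \ref{l-conformal-1}); the flow then exists on the complete manifold $(U_{\rho_0},h_0)$, the noncompact maximum principle (Lemma \ref{l-max}, which is where $\rho$ is genuinely needed) gives estimates independent of $\rho_0$, an extension argument pushes the solution to time $S-\e$, and the local estimates of Sherman--Weinkove give convergence as $\rho_0\to\infty$, $\e\to 0$.

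A smaller but real defect is your $C^2$ test quantity $\log\tr_{g_0}g_k-A\varphi_k-B\rho$: since only $\mathrm{BK}(g_0)\ge -K$ is assumed, there is no positive lower bound on $\theta_0-t\Ric(\theta_0)$, so the term $-A\,\heat\varphi_k$ does not generate the negative multiple of $\tr_{g}g_0$ needed to absorb the curvature and torsion errors. The paper instead feeds $u$ into the test function through $\phi=(S_2-t)\dot\psi+\psi+nt-\frac{S_2}{S}u+\mathfrak{m}+1$ and works with $\log\tr_{g_0}g+A\phi^{-1}-\e e^{Ct}\rho$ (Lemma \ref{l-trace-est-1}), where condition {\bf(a2)} gives $\heat\phi\ge\bigl(1-\frac{S_2}{S}\bigr)\tr_g g_0$; some such use of $u$ inside the second-order estimate is essential, not optional, under hypothesis (ii) alone.
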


Here we do not assume that $g_0$ has bounded Riemannian curvature. We only assume that the bisectional curvature of $g_0$ is bounded from below.

It is well-known that if $g_0$ is \K and  has bounded curvature,  then (iii) is always true, see \cite{Tam2010} for example. If $g_0$ is \K with nonnegative bisectional curvature then (iii) is also true, see \cite{NiTam2013}.

For the case that the bisectional curvature is bounded from above, we have:

\begin{thm}\label{t-existence-intro-2}
Let $(M^n,g_0)$ be a complete noncompact Hermitian manifold and let $T$ be the torsion of $g_0$. Suppose $g_0$ satisfies conditions (i) and (iii) in  Theorem \ref{t-existence-intro-1} and (ii) is replaced by the condition that $\mathrm{BK}(g_0)\le K$. Suppose in addition there is a smooth bounded function $u$ and positive constants $\beta, \beta'>0$ so that
$$
\beta'\theta_0\ge \theta_0-S\Ric(\theta_0)+\ii\ddbar u\ge \beta\theta_0.
$$
Then the Chern-Ricci flow has a solution $g(t)$ with initial data $g_0$ on $M\times[0,S_1]$ for some $S_1>0$. Moreover, $g(t)$ is uniformly equivalent to $g_0$.
\end{thm}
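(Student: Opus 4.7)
Follow the parabolic complex Monge-Amp\`ere reformulation of \cite{TosattiWeinkove2015,LottZhang2011}: set $\hat\theta_t:=\theta_0-t\Ric(\theta_0)$, look for $\theta(t)=\hat\theta_t+\ii\ddbar\varphi$, and reduce the flow \eqref{e-Gill} to
\bee
\frac{\p\varphi}{\p t}=\log\frac{(\hat\theta_t+\ii\ddbar\varphi)^n}{\theta_0^n},\qquad \varphi(\cdot,0)=0.
\eee
After replacing $\beta$ by $\min(\beta,1)$ and $\beta'$ by $\max(\beta',1)$, the linear interpolation $\psi:=\tfrac{t}{S}u$ converts the two-sided hypothesis into the admissible reference estimate
\bee
c_1\theta_0\le\hat\theta_t+\ii\ddbar\psi\le c_2\theta_0,\qquad t\in[0,S].
\eee
Thus $\hat\theta_t$ is uniformly comparable to $\theta_0$ modulo a bounded smooth correction, and $\psi$ is the natural function against which the potential $\varphi$ will be compared.

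Using $\rho$ from (iii), exhaust $M$ by smooth relatively compact domains $\Omega_j\Subset M$ with $\bigcup_j\Omega_j=M$, and for each $j$ produce a short-time Chern-Ricci flow $g_j(t)$ through a compact approximation (either by solving the Dirichlet problem on $\Omega_j$ for the above parabolic Monge-Amp\`ere equation with boundary value $\psi$, or by extending $g_0|_{\Omega_j}$ to a compact Hermitian manifold and invoking Theorem~\ref{t-TosattiWeinkove}). The core of the proof is to establish a priori bounds on $\varphi_j$ uniformly in $j$ on a common time interval $[0,S_1]\subset[0,S]$. A $C^0$-bound on $\varphi_j-\psi$ follows from the maximum principle together with the reference estimate. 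The pivotal trace bound $\mathrm{tr}_{g_0}g_j(t)\le C$ comes from the Tosatti-Weinkove evolution inequality for $F:=\log\mathrm{tr}_{g_0}g_j(t)-A\varphi_j$: hypothesis (i) controls the torsion and $\bar\p T$ terms that appear, while the bisectional curvature upper bound $\mathrm{BK}(g_0)\le K$ supplies the favourable sign for the curvature term (playing, in the upper-bound direction, the role that $\mathrm{BK}(g_0)\ge -K$ plays for the lower bound in Theorem~\ref{t-existence-intro-1}); the exhaustion function $\rho$ together with the bounds $|\p\rho|,|\ddbar\rho|\le C$ from (iii) localises the maximum principle via an auxiliary term $-\e\rho$ whose contribution is removed by letting $\e\to 0$. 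Combined with the lower bound $g_j(t)\ge c_1g_0$ inherited from the reference estimate, one obtains $c\, g_0\le g_j(t)\le C\, g_0$ uniformly on compact subsets, and Calabi third-order plus Evans-Krylov arguments then yield uniform interior $C^k$ bounds.

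A diagonal sub-sequential limit produces a smooth $\varphi_\infty$ on $M\times[0,S_1]$, and $\theta(t):=\hat\theta_t+\ii\ddbar\varphi_\infty$ provides a solution of \eqref{e-Gill} on $M\times[0,S_1]$ which, by construction, is uniformly equivalent to $g_0$. The principal obstacle is the uniform trace upper bound on a noncompact manifold: without a curvature lower bound on $g_0$ the Omori-Yau maximum principle is unavailable, so the barrier built from $\rho$ via (iii), combined with the good sign of the curvature term furnished by $\mathrm{BK}(g_0)\le K$ and the torsion control from (i), are exactly what makes the Tosatti-Weinkove-type computation close in the noncompact setting; once this step is carried out, the remaining arguments are standard parabolic complex Monge-Amp\`ere theory.
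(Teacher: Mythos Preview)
Your proposal has the direction of the key trace estimate backwards, and this is a genuine gap. In the evolution inequality for $\log\tr_{g_0}g$ (Lemma~\ref{l-trace}(i), which is the Tosatti--Weinkove computation you invoke), the curvature of the background metric $g_0$ enters with the sign that requires a \emph{lower} bound on $\mathrm{BK}(g_0)$ in order to bound $\tr_{g_0}g$ from above---this is precisely how Theorem~\ref{t-existence-intro-1} proceeds. An \emph{upper} bound $\mathrm{BK}(g_0)\le K$ gives no control whatsoever on $\tr_{g_0}g$; what it controls is the \emph{opposite} trace $\Theta=\tr_g g_0$ (Lemma~\ref{l-trace}(ii)), and a bound $\Theta\le C$ yields the \emph{lower} bound $g(t)\ge C^{-1}g_0$, not the upper bound you are after. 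So the roles of curvature lower/upper bounds vis-\`a-vis metric upper/lower bounds are exactly opposite to what you assert. Relatedly, your claim that $g_j(t)\ge c_1g_0$ is ``inherited from the reference estimate'' is unfounded: the reference estimate bounds $\hat\theta_t+\ii\ddbar\psi$, not $\hat\theta_t+\ii\ddbar\varphi_j$.

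The paper's argument (Theorem~\ref{t-existence-2}(i)) runs as follows. Exhaust $M$ by $U_{\rho_0}$ and conformally complete $g_0$ to $h_0=e^{2F}g_0$ as in the proof of Theorem~\ref{t-existence-1}; since $h_0$ has bounded geometry of infinite order, Theorem~\ref{t-existence-1} furnishes a solution $h(t)$ on $U_{\rho_0}\times[0,\tfrac12 S_2]$, but with equivalence constants a priori depending on $\rho_0$. To remove this dependence one bounds $\Theta=\tr_h h_0$ via Lemma~\ref{l-trace}(ii)---this is where $\mathrm{BK}(h_0)\le K'$ (inherited from $\mathrm{BK}(g_0)\le K$ under conformal change) is used---together with a two-sided bound on $\dot\psi$. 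The lower bound on $\dot\psi$ follows from {\bf(a2)} as in Lemma~\ref{l-psi-est}(iii). The upper bound on $\psi$ (hence on $\dot\psi$ via $t\dot\psi\le\psi+nt$) cannot come from Lemma~\ref{l-psi-est}(i), which needs $\mathrm{BK}\ge -K$; instead one uses the \emph{upper} reference inequality {\bf(a3)}, i.e.\ the hypothesis $\theta_0-S\Ric(\theta_0)+\ii\ddbar u\le\beta'\theta_0$, in a separate barrier argument. Once $|\dot\psi|\le C$, the determinant bound gives $\Upsilon\le c\Theta^{n-1}$, and Lemma~\ref{l-trace}(ii) becomes a closed differential inequality for $\Theta$ alone, yielding $\Theta\le C$ on a short interval $[0,S_3]$ by ODE comparison. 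Combining $\Theta\le C$ with $|\dot\psi|\le C$ gives full uniform equivalence independent of $\rho_0$, and one lets $\rho_0\to\infty$. Note also that the paper produces the approximating flows via conformal completion and Lemma~\ref{l-shortime-1}/Theorem~\ref{t-existence-1}, not via Dirichlet problems or compactification as you suggest.
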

 In case $g_0$ has bounded curvature with $|T|_{g_0}$ and $ |\bar\p T|_{g_0}$ uniformly bounded, then one can obtain more explicit estimates. See Theorem \ref{t-existence-2} for more details. The estimates will be crucial in the proof of our next result on the short time existence of \KR flow.

Recall that in \cite{Cabezas-RivasWilking2011}, without assuming that the initial metric has bounded curvature, Cabezas-Rivas and   Wilking are able to construct a short time solution to the Ricci flow on complete noncompact manifolds starting from a metric with nonnegative complex sectional curvature and noncollapsing in the sense that the volume of every  geodesic ball of radius 1 is bounded below by a fixed positive constant $v_0$. Moreover, the curvature of the solution $g(t), t>0$ is bounded by $a/t$ for some $a$. The result is recently generalized to the case that the complex sectional curvature is bounded from below by Bamler, Cabezas-Rivas and   Wilking \cite{BamlerCabezasWilking2017}. On the other hand, Simon and Topping \cite{SimonTopping2017} use some ideas of Hochard \cite{Hochard2016} and together with their results in \cite{SimonTopping2016}  to prove that  similar results are true for three dimensional Riemannian manifolds under the   weaker condition that the Ricci curvature is bounded from below.
In this work, we prove the following:
\begin{thm}\label{existenceKRF}
Suppose $(M^n,g_0)$ is a complete noncompact \K manifold with complex dimension n with $BK\geq 0$ and  $V_0(x,1)\geq v_0>0$ for some $v_0>0$ for all $x\in M$. Then the following are true:

 \begin{enumerate}
   \item [(i)] There exist $ S=S(n,v_0)>0 , a(n,v_0)>0$ depending only on $n, v_0$ such that the \K Ricci flow has a complete  solution $g(t)$  on $M\times[0,S]$ and satisfies
$$  |Rm|(x,t)\leq \frac{a(n,v_0)}{t}$$
on $M\times (0,S].$
   \item [(ii)]   $g(t)$ has nonnegative bisectional curvature. If $g_0$ has maximal volume growth, then $g(t)$ also has maximal volume growth.
       \item[(iii)] $V_t(x,1)\ge \frac 12v_0$ on $M\times(0,S]$, where $V_t(x,1)$ is the volume of ball of radius 1 with center at $x$ with respect to $g(t)$.
 \end{enumerate}
\end{thm}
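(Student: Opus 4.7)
The plan is to (i) approximate $g_0$ by complete Kähler metrics of bounded curvature, (ii) run the Kähler-Ricci flow on each approximation using the short-time existence theory (Shi, or Theorem \ref{t-existence-2}), (iii) establish a uniform curvature estimate $|\mathrm{Rm}|\le a/t$ with $a,S$ depending only on $n$ and $v_0$, and (iv) extract a $C^\infty_{\mathrm{loc}}$-limit via Hamilton's compactness theorem.

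For the approximation step, the metric $g_0$ satisfies hypotheses (i)-(iii) of Theorem \ref{t-existence-intro-1}: torsion vanishes (Kähler), $\mathrm{BK}(g_0)\ge 0$, and by Ni-Tam there exists a smooth $\rho$ uniformly equivalent to the distance with $|\p\rho|_{g_0},|\ddbar\rho|_{g_0}$ bounded. Using Theorem \ref{t-existence-intro-1} followed by the interior estimates of Theorem \ref{t-existence-2}, one produces a short-time Kähler-Ricci flow on $M$, then freezes it at times $\tau_k\searrow 0$ to obtain bounded-curvature Kähler approximations $g_{0,k}$ with $\mathrm{BK}\ge 0$ and $V_{0,k}(x,1)\ge v_0/2$, converging to $g_0$ in $C^\infty_{\mathrm{loc}}$. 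Shi's theorem then produces KRF solutions $g_k(t)$ from each $g_{0,k}$ on some short interval.

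The crux is the uniform curvature estimate
\[
|\mathrm{Rm}|_{g_k(t)}(x) \le \frac{a(n,v_0)}{t}\quad \text{on } M\times(0, S(n,v_0)].
\]
I would prove this by a Perelman-style point-picking/contradiction argument: if the estimate fails along a subsequence, rescale about points $(x_k, t_k)$ where $t\,|\mathrm{Rm}|$ is nearly maximal so that $|\mathrm{Rm}|(\tilde x_k, 0)=1$ in the rescaled flow. Hamilton's compactness, combined with preservation of $\mathrm{BK}\ge 0$ under KRF (Bando-Mok) and noncollapsing transferred from $v_0$ via a localized version of Perelman's reduced-volume monotonicity on large geodesic balls, extracts a smooth limit Kähler-Ricci flow $(M_\infty, g_\infty(t))$ on a long time interval, uniformly noncollapsed at unit scale, with $\mathrm{BK}_\infty\ge 0$ and $|\mathrm{Rm}|_\infty=1$ at an interior point. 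A rigidity or gap theorem for noncollapsed ancient Kähler solutions with nonnegative bisectional curvature (in the spirit of the structural results used by Liu and Ni) then forces flatness of $g_\infty$, a contradiction. The main obstacle lies precisely here: localizing Perelman's noncollapsing on a noncompact manifold without global curvature bounds, and invoking the ancient-solution rigidity, both require the most care.

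With the uniform estimate in hand, Hamilton's compactness yields a subsequential smooth limit $g(t)$ on $M\times(0,S]$ with $g(t)\to g_0$ in $C^\infty_{\mathrm{loc}}$ as $t\to 0^+$ and $|\mathrm{Rm}|_{g(t)}\le a/t$, establishing (i). Nonnegativity of BK passes from each $g_k(t)$ to $g(t)$, proving the first half of (ii). For (iii), the bound $|\mathrm{Rm}|\le a/t$ gives a Hamilton-type distance distortion $|d_{g(t)}-d_{g_0}|\le C\sqrt{at}$, while $\p_t\log\det g_{\ijb}=-R\le 0$ (from $\mathrm{Ric}\ge 0$) controls the volume form from above, yielding $V_t(x,1)\ge v_0 - o(1)\ge \tfrac12 v_0$ after further shrinking $S=S(n,v_0)$. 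Preservation of maximal volume growth in (ii) follows from the same distance distortion together with Bishop-Gromov applied to the nonnegative-Ricci metric $g(t)$, transferring $V_0(x_0, r)\ge \nu r^{2n}$ for all $r$ to $V_t(x_0,r)\ge \nu' r^{2n}$ with $\nu'=\nu'(\nu, n, v_0)$.
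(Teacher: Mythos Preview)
Your approach has two genuine gaps. First, the approximation step is circular: Theorem~\ref{t-existence-intro-1} only asserts $S_A=S_B$, and yields a flow from $g_0$ only once {\bf(a2)} is verified for some $S>0$. With $\mathrm{BK}(g_0)\ge 0$ but no upper curvature bound, $\Ric(g_0)$ may be unbounded above, and you produce no bounded $u$ with $\theta_0-S\Ric(\theta_0)+\ii\ddbar u\ge\beta\theta_0$. Even granting a short-time flow, freezing at $\tau_k$ gives bounded curvature and $\mathrm{BK}\ge 0$ only after one already knows $|\Rm|\le a/t$ and that nonnegative BK is preserved---both of which are parts of the conclusion you are trying to prove. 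Second, your uniform $a/t$ estimate rests on a blow-up to an ancient noncollapsed K\"ahler solution with $\mathrm{BK}\ge 0$ and an unspecified ``rigidity or gap theorem'' forcing flatness; no such classification is available at the generality required here, and the localized Perelman noncollapsing you invoke (on incomplete pieces, without global curvature bounds) is likewise not established. You correctly identify this as the main obstacle, but it is not overcome.

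The paper avoids both issues by a completely different route. It never approximates $g_0$ globally and never argues by contradiction. Instead it works on large balls $B_0(p,R)$ and runs an iterative ``pyramid'' construction after Simon--Topping \cite{SimonTopping2017} and Hochard \cite{Hochard2016}: at step $k$ the flow is extended from time $t_k$ to $t_{k+1}=(1+\mu)^2t_k$ on a ball shrunk by $\e t_k^{1/2}$. The extension step is Lemma~\ref{l-extension-1}, which conformally completes the current metric near the boundary of the ball and applies the Chern-Ricci flow existence of Theorem~\ref{t-existence-2}(ii) to restart with uniform metric equivalence; the $|\Rm|\le a/t$ bound at each stage comes directly from the local pseudolocality-type estimate Lemma~\ref{l-curvestimate} (from \cite{LeeTam2017}), together with Lemmas~\ref{l-curv1}, \ref{l-lowerbound-BK-1}, \ref{l-balls}, with no ancient-solution analysis. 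The iteration reaches a time $S(n,v_0)$ determined by explicit constants, after which one lets $R\to\infty$. Parts (ii) and (iii) are then deduced from (i) using \cite{LeeTam2017,HuangTam2015} and Simon's volume lemma \cite{Simon2012}; note in particular that Bando--Mok does not apply when curvature is only bounded by $a/t$.
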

Combining this with the  a result of Chau and the second author \cite{ChauTam2006}, we have:
\begin{cor}\label{c-Yau}
Suppose $(M^n,g_0)$ is a complete noncompact \K manifold with nonnegative bisectional curvature with maximal volume growth, then $M$ is biholomorphic to $\C^n$.
\end{cor}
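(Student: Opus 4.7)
The plan is to use Theorem \ref{existenceKRF} to reduce the problem to one where the curvature is bounded, and then invoke the result of Chau-Tam \cite{ChauTam2006}. Recall that the Chau-Tam result states that a complete noncompact K\"ahler manifold with \emph{bounded} nonnegative bisectional curvature and maximal volume growth is biholomorphic to $\C^n$. The role of Theorem \ref{existenceKRF} is precisely to deform $g_0$ (which may have unbounded curvature) into such a metric while preserving the hypotheses needed by Chau-Tam.

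First, observe that $(M^n,g_0)$ with nonnegative bisectional curvature and maximal volume growth automatically satisfies the noncollapsing hypothesis of Theorem \ref{existenceKRF}: the Bishop-Gromov comparison (valid since $\Ric\ge 0$) together with maximal volume growth yields a uniform lower bound $V_0(x,1)\ge v_0>0$ independent of $x$. Hence Theorem \ref{existenceKRF} produces a complete solution $g(t)$ of the K\"ahler-Ricci flow on $M\times[0,S]$ for some $S=S(n,v_0)>0$, with $|\Rm|(x,t)\le a(n,v_0)/t$ on $M\times(0,S]$, such that $g(t)$ has nonnegative bisectional curvature and maximal volume growth for every $t\in(0,S]$.

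Next, fix any $t_0\in(0,S]$. Then $(M,g(t_0))$ is a complete noncompact K\"ahler manifold which is K\"ahler, has bisectional curvature $\ge 0$ (by Theorem \ref{existenceKRF}(ii)), has bounded curvature $|\Rm|_{g(t_0)}\le a/t_0$ (by Theorem \ref{existenceKRF}(i)), and has maximal volume growth (again by Theorem \ref{existenceKRF}(ii)). These are exactly the hypotheses of the Chau-Tam theorem \cite{ChauTam2006}, so $(M,g(t_0))$ is biholomorphic to $\C^n$. Since biholomorphism depends only on the complex structure of $M$ (which is fixed throughout the flow), this gives $M\cong\C^n$ as complex manifolds.

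The main point is really Theorem \ref{existenceKRF}; Corollary \ref{c-Yau} itself is then a clean combination step. The only minor point to verify is that the noncollapsing hypothesis in Theorem \ref{existenceKRF} is automatic from nonnegative Ricci curvature plus maximal volume growth, which follows from volume comparison. No substantive obstacle arises in this deduction.
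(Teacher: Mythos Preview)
Your proposal is correct and follows essentially the same approach as the paper: apply Theorem \ref{existenceKRF} to obtain a complete \K metric $g(t_0)$ with nonnegative \emph{bounded} bisectional curvature and maximal volume growth, and then invoke Chau--Tam \cite{ChauTam2006}. The only extra detail you supply is the explicit verification (via Bishop--Gromov) that maximal volume growth with $\Ric\ge 0$ implies the uniform noncollapsing $V_0(x,1)\ge v_0$, which the paper leaves implicit.
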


This gives a partial answer to the uniformization conjecture by Yau \cite{Yau1991} which states that a complete noncompact K\"ahler manifold with positive holomorphic bisectional curvature is biholomorphic to $\mathbb{C}^n$. In case $g_0$ has bounded curvature, the above result was proved in \cite{ChauTam2006}. Without assuming the boundedness of curvature, the above result is already proved recently by Gang Liu \cite{Liu2017}, where he uses Gromov-Hausdorff convergence together with other techniques. Here we give an alternative proof using \KR flow. We should mention that there are many important contributions by various authors on Yau's conjecture. For more detailed discussion on the results related to the conjecture, one might refer to \cite{ChauTam2008}.

The paper is organized as follows: in Section \ref{s-shorttime}, we will give a short time existence on Chern-Ricci flow. In Section \ref{s-estimates}, we derive some a priori estimates for Chern Ricci flow and apply it in Section \ref{s-existence} to show Theorems \ref{t-existence-intro-1} and \ref{t-existence-intro-2}. In Section \ref{s-KRF}, we will prove Theorem \ref{existenceKRF}. In the appendix, we collect some information about the Chern connection and the behaviors under conformal change of metrics.

{\it Acknowledgement}: The authors would like to thank Albert Chau for some useful comments. The authors also thank  Huai-Dong Cao, Albert Chau and Shing-Tung Yau for their interest in the work.

\section{a short time existence lemma}\label{s-shorttime}

Let $(M^n,g_0)$ be a complete noncompact Hermitian manifold with complex dimension $n$. In the following, connection and  curvature etc. always mean the Chern connection and curvature with respect to the Chern connection.
In this section, we want to discuss the existence of the Chern-Ricci flow:
\be\label{e-CRflow-1}
\left\{
  \begin{array}{ll}
    \frac{\p}{\p t}g_\ijb=  &  -R_\ijb; \\
    g(0)=  &g_0.
  \end{array}
\right.
\ee
where $R_\ijb=-\p_i\p_{\bar j}\log \det (g(t))$ is the Chern Ricci curvature of $g(t)$.
This equation is equivalent to the following parabolic complex Monge-Amp\`ere equation:
\be\label{e-MA-1}
\left\{
  \begin{array}{ll}
    \frac{\p}{\p t}\psi=  &  \dps{\log \frac{(\theta_0-t\Ric(\theta_0)+\ii\p\bar\p \psi)^n}{\theta_0^n} }; \\
    \psi(0)=  &0.
  \end{array}
\right.
\ee
More precisely, if $g(t)$ is a solution to \eqref{e-CRflow-1}, let
\be\label{e-psi}
\psi(x,t)=\int_0^t\log \lf(\frac {\theta^n(x,s)}{\theta_0^n(x)}\ri)ds
\ee
where $\theta(t)$ and $\theta_0$ are the \K forms of $g(t)$, $g_0$ respectively. Then $\psi$ satisfies \eqref{e-MA-1}. One can see that
  $\theta(t)= \theta_0-t\Ric(\theta_0)+\ii\p\bar\p \psi.$ Conversely, if $\psi$ is a smooth solution to \eqref{e-MA-1} so that $\theta_0-t\Ric(\theta_0)+\ii\p\bar\p \psi>0$, then $\theta(t)$ defined by the above relation satisfies \eqref{e-CRflow-1}. We will say that $\psi$ is the solution of \eqref{e-MA-1} corresponding to the solution $g(t)$ of \eqref{e-CRflow-1}.

  To begin our discussion, we need the following:

  \begin{defn}\label{boundedgeom} Let $(M^n, g)$ be a complete Hermitian manifold. Let  $k\ge 1$ be an integer and $0<\alpha<1$. $g$ is said to have   bounded geometry of   order $k+\alpha$ if there are positive numbers $r, \kappa_1, \kappa_2$  such that at every $p\in M$ there is a neighborhood $U_p$ of $p$, and local biholomorphism $\xi_{p}$ from $D(r)$ onto $U_p$ with $\xi_p(0)=p$ satisfying  the following properties:
  \begin{itemize}
    \item [(i)] the pull back metric $\xi_p^*(g)$   satisfies:
    $$
    \kappa_1 g_e\le \xi_p^*(g)\le \kappa_2 g_e$$ where $g_e$ is the standard metric on $\C^n$;
    \item [(ii)] the components $g_{\ijb}$ of $\xi_p^*(g)$ in the natural coordinate of $D(r)\subset \C^n$  are uniformly bounded in the standard $C^{k+\alpha}$ norm in $D(r)$ independent of $p$.
  \end{itemize}
$g$ is said to have bounded geometry of infinity order if instead of (ii) we have for any $k$, the $k$-th derivatives of $g_\ijb$ in $D(r)$ are bounded by a constant independent of $p$. $g$ is said to have bounded curvature of infinite order on a compact set $\Omega$ if (i) and (ii) are true for all $k$ for all $p\in \Omega$.
 \end{defn}

\begin{lma}\label{l-shortime-1} Let $(M^n,g_0)$ be a complete noncompact Hermitian metric. Suppose $g_0$ has bounded geometry of infinite order, then \eqref{e-MA-1} has a smooth solution $\psi$ on $M\times[0,S]$ for some $S>0$ and there is a constant $C>0$ such that
$C^{-1}\theta_0\le \theta(t)\le C\theta_0$ where $\theta_0$ is the \K form of $g_0$ and
$\theta(t)= \theta_0-t\Ric(\theta_0)+\ii\p\bar\p \psi$. In particular, $\theta(t)$ is a solution to \eqref{e-CRflow-1}.
\end{lma}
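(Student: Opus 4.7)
The plan is to produce $\psi$ as the limit of solutions to Dirichlet-type initial-boundary value problems on an exhaustion of $M$, with the bounded geometry of $g_0$ providing uniform a priori estimates. Fix a smooth exhaustion $\Omega_1\subset\subset\Omega_2\subset\subset\cdots$ of $M$ by precompact domains with smooth boundary, and on each $\Omega_j$ solve \eqref{e-MA-1} with Dirichlet data $\psi_j\equiv 0$ on $\p\Omega_j$. On a compact manifold with boundary such a problem admits a smooth short-time solution $\psi_j$ on some maximal $[0,S_j)$ by standard parabolic theory for complex Monge-Amp\`ere equations; alternatively one may glue $\Omega_j$ into a closed Hermitian manifold and invoke Gill's short-time existence in the compact case. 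One must then show $S_j$ is bounded below uniformly in $j$ and that $\psi_j$ admits uniform bounds to every order.

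The bounded geometry assumption enters in two complementary ways. Globally, it yields uniform bounds on $|\Ric(\theta_0)|_{g_0}$, on the torsion $T$ and its derivative $\ol{\p}T$, and on the Chern curvature of $g_0$; these are precisely what is needed to dominate the error terms in maximum principle computations. Locally, pulling \eqref{e-MA-1} back by the biholomorphisms $\xi_p\colon D(r)\to U_p$ from Definition \ref{boundedgeom} converts it into a parabolic complex Monge-Amp\`ere equation whose coefficients are uniformly $C^{k,\a}$-bounded, so interior parabolic Krylov-Safonov and Schauder estimates apply uniformly in $p$. The standard chain of a priori estimates is then: (a) a $C^0$ bound $|\psi_j|\le Ct$, obtained by applying the maximum principle to $\psi_j\mp Ct$ and using boundedness of $\Ric(\theta_0)$; (b) a uniform second-order bound $C^{-1}\theta_0\le \theta_j(t)\le C\theta_0$, obtained by computing $\heat\log \tr_{g_0}g_j$ along the flow (and the analogous quantity with the roles reversed) and applying the maximum principle to $\log \tr_{g_0} g_j - A\psi_j$ for $A$ sufficiently large; and (c) once (b) is in place, the equation is uniformly parabolic in the bounded-geometry charts, so higher-order uniform bounds on $\psi_j$ follow from interior parabolic Schauder. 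Arzel\`a-Ascoli then extracts a subsequential $C^\infty_{\mathrm{loc}}$-limit $\psi$ on $M\times[0,S]$ which solves \eqref{e-MA-1} and satisfies $C^{-1}\theta_0\le \theta(t)\le C\theta_0$.

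The main obstacle is step (b). In the K\"ahler case, the Aubin-Yau-type evolution of $\log \tr_{g_0}g_j$ closes cleanly under a lower bound on the bisectional curvature of $g_0$. In the present Hermitian setting the same computation produces additional terms involving $T$, $\ol{\p}T$, and the full Chern curvature of $g_0$; all of these are controlled by bounded geometry, and can be absorbed by taking $A$ large enough. A second subtlety is the Dirichlet boundary: one must rule out the maximum of the auxiliary quantity $\log \tr_{g_0}g_j - A\psi_j$ occurring on $\p\Omega_j$, which will be arranged using the vanishing of $\psi_j$ there together with a barrier built from a bounded-gradient exhaustion function on $\Omega_j$. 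Carrying out these estimates with constants independent of $j$ is the technical heart of the proof; everything else is a fairly standard limiting argument.
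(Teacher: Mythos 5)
Your route is genuinely different from the paper's, and as written it has real gaps rather than just unfinished routine work. The paper gives no exhaustion argument at all: it invokes \cite{ChauTam2011}, Lemma 2.2, where the bounded geometry of infinite order is used to solve the parabolic Monge-Amp\`ere equation \emph{directly on the noncompact manifold} (uniform parabolic Schauder/fixed-point theory in H\"older spaces modeled on the charts of Definition \ref{boundedgeom}), so the maximum principle is never run on a manifold with boundary and no boundary regularity theory is needed. Your proposal replaces this by Dirichlet problems on an exhaustion, which is exactly where the trouble lies. In particular, the suggested shortcut of gluing $\Omega_j$ into a closed Hermitian manifold so as to quote Gill \cite{Gill2011} is unjustified: a precompact domain of an arbitrary complex manifold cannot in general be holomorphically embedded into a compact complex manifold, let alone with an extension of the metric; so you are left with the initial-boundary value problem for a fully nonlinear parabolic equation, whose solvability and regularity up to $\p\Omega_j$ you do not actually establish.

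The more serious gap is step (b), which you correctly identify as the heart but do not close. The maximum of $\log\tr_{g_0}g_j-A\psi_j$ may occur on $\p\Omega_j\times(0,S_j]$, and the Dirichlet condition $\psi_j=0$ there gives no control on $\ii\ddbar\psi_j$ at the boundary; a barrier built from a bounded-gradient exhaustion function controls at best $\psi_j$ and its first derivatives, not the complex Hessian. Genuine boundary second-order estimates for Monge-Amp\`ere--type Dirichlet problems need additional structure (strict pseudoconvexity of $\p\Omega_j$ or a global subsolution), which arbitrary exhaustion domains of an arbitrary complete Hermitian manifold need not admit; and there is no purely interior second-order estimate depending only on the $C^0$ bound of the potential that you could use instead --- the local estimates of \cite{ShermanWeinkove2013}, used elsewhere in the paper, take the uniform equivalence $C^{-1}\theta_0\le\theta_j(t)\le C\theta_0$ as a \emph{hypothesis}, so they cannot produce it. The uniform lower bound on the existence times $S_j$ hinges on this same missing estimate. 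Unless you supply boundary $C^2$ estimates (e.g.\ by choosing the exhaustion with controlled pseudoconvex boundaries, which is itself a nontrivial assumption on $M$), the exhaustion scheme does not go through, whereas the bounded-geometry hypothesis is precisely what lets the paper bypass all of this via \cite{ChauTam2011}.
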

\begin{proof} The proof is similar to the proof of \cite[Lemma 2.2]{ChauTam2011}.

\end{proof}

We want to consider the following conditions on a complete noncompact Hermitian manifold $(M^n,g)$:

\begin{itemize}
  \item [{\bf(a1)}] There exists a smooth real function $\rho$ which is uniformly equivalent to the distance function from a fixed point such that $|\p\rho|_{g_0}, |\ddbar \rho|_{g_0}$ are uniformly bounded.
  \item [{\bf(a2)}] There is a smooth bounded function $u$ such that
  \bee
  \theta_0-S\Ric(\theta_0)+\ii\ddbar u\ge \beta\theta_0
  \eee
 where  $S, \beta$ are positive constants.
 \item [{\bf(a2)}] There is a smooth bounded function $v$ such that
  \bee
  \theta_0-S'\Ric(\theta_0)+\ii\ddbar v\le \beta'\theta_0
  \eee
  where  $S', \beta'$ are positive constants
\end{itemize}

We have the following:

\begin{lma}\label{l-equivalent} Let $g_0, g_1$  be two complete Hermitian metrics on a noncompact complex manifolds $M^n$. Suppose $\a g_0\le g_1\le \a^{-1}g_0$ for some $\a>0$.

\begin{enumerate}
  \item [(i)] Suppose $g_0$ satisfies {\bf (a1)}, so does $g_1$ with the same $\rho$.
  \item [(ii)] Suppose $g_0$ satisfies {\bf(a2)} with $S>0$, $u$ being bounded and $\beta>0$. Then $g_1$ also satisfies {\bf(a2)}:
       $$
       \theta_1-\a S\Ric(\theta_1)+\ii\ddbar u_1\ge \beta \theta_1
       $$
       for some smooth bounded function $u_1$.   Here $\theta_1$ is the \K form of $g_1$.
        \item [(iii)] Suppose $g_0$ satisfies {\bf(a3)} with $S'>0$, $v$ being bounded and $\beta'>0$. Then $g_1$ also satisfies {\bf(a2)}:
       $$
       \theta_1-\a S'\Ric(\theta_1)+\ii\ddbar v_1\ge \beta' \theta_1
       $$
       for some smooth bounded function $v_1$.
\end{enumerate}

\end{lma}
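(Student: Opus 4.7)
The plan is to reduce all three parts to the transformation formula
$$
\Ric(\theta_0) = \Ric(\theta_1) + \ii\ddbar f, \qquad f := \log\frac{\det g_1}{\det g_0},
$$
which is immediate from $\Ric(\theta) = -\ii\ddbar\log\det g$. Because the eigenvalues of $g_1$ with respect to $g_0$ lie in $[\a,\a^{-1}]$, we have $|f|\le n|\log \a|$, so $f$ is smooth and bounded. The equivalence passes to inverse metrics and to the associated $(1,1)$-forms as well, giving $\a g_0^{-1}\le g_1^{-1}\le \a^{-1}g_0^{-1}$ and $\a\theta_1\le \theta_0\le \a^{-1}\theta_1$.

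For (i), lengths of smooth curves computed with $g_0$ and $g_1$ differ by a factor in $[\sqrt{\a},1/\sqrt{\a}]$, so the distance functions are uniformly equivalent and $\rho$ is uniformly equivalent to the $g_1$-distance from the base point. The bounds $|\p\rho|_{g_1}^2 \le \a^{-1}|\p\rho|_{g_0}^2$ and $|\ii\ddbar\rho|_{g_1}^2 \le \a^{-2}|\ii\ddbar\rho|_{g_0}^2$ follow by contracting the tensors $\p_i\rho$ and $\p_i\p_{\bar j}\rho$ with the comparable inverse metrics.

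For (ii), substituting the Ricci transformation into the hypothesis yields
$$
\theta_0 - S\Ric(\theta_1) + \ii\ddbar(u-Sf) \ge \beta\theta_0.
$$
Multiplying through by $\a$, adding the non-negative $(1,1)$-form $\theta_1 - \a\theta_0 \ge 0$ to the left-hand side (which only strengthens a lower bound), and using $\a\theta_0 \ge \a^2\theta_1$ on the right, I obtain
$$
\theta_1 - \a S\Ric(\theta_1) + \ii\ddbar\bigl(\a(u-Sf)\bigr) \ge \a^2\beta\,\theta_1,
$$
so $u_1 := \a(u-Sf)$ is the desired smooth bounded potential and $\a^2\beta$ takes the role of the new positive constant $\beta$. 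Part (iii) is the mirror calculation: adding the same non-negative form to both sides of the upper-bound inequality and using $\a\theta_0 \le \theta_1$ together with $\theta_1 - \a\theta_0 \le (1-\a^2)\theta_1$ bounds everything above by a constant multiple of $\theta_1$, with $v_1 := \a(v - S'f)$.

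No step presents a genuine obstacle: the content of the lemma is purely the bookkeeping that tracks how the parameter $S$ rescales to $\a S$, how the potential absorbs the bounded term $-Sf$, and how the positive constants on the right rescale by factors depending on $\a$. The only point requiring care is to add the non-negative form $\theta_1 - \a\theta_0$ on the side of each inequality that preserves its direction.
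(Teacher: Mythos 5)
Your proof is correct and follows essentially the same route as the paper: you absorb the difference $\Ric(\theta_0)-\Ric(\theta_1)=\ii\ddbar\log(\theta_1^n/\theta_0^n)$ into the bounded potential (your $u-Sf$ is exactly the paper's $u+f$ with $f=S\log(\theta_0^n/\theta_1^n)$), and then use the uniform equivalence $\a\theta_0\le\theta_1\le\a^{-1}\theta_0$ to rescale the time parameter to $\a S$ and the constant to $\a^2\beta$. The paper leaves (i) and (iii) as ``obvious''/``similar''; your explicit verifications of these are fine.
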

\begin{proof} (i) is obvious. To prove (ii), let $f=S\log(\theta_0^n/\theta_1^n)$, then for any $b>0$
\bee
\begin{split}
b\theta_1-S\Ric(\theta_1)+\ii\ddbar (u+f)=&b\theta_1-S\Ric(\theta_0)+\ii\ddbar u\\
\ge& \theta_1+(\beta-1)\theta_0 \\
\ge & (\a b+\beta-1)\theta_0\\
\ge & \a\beta g_1
\end{split}
\eee
if $b=\a^{-1}$. From this the result follows.

The proof of (iii) is similar.

\end{proof}

\section{a priori estimates for Chern-Ricci flow}\label{s-estimates}

Let $(M^n,h_0)$ be a Hermitian manifold and let $h(t)$ be a solution of \eqref{e-CRflow-1} with initial data $h(0)=h_0$ on $M\times[0,S]$ with $S>0$. Let $\omega_0$, $\omega(t)$ be the \K forms of $h_0, h(t)$ respectively. Let $\psi$ be the solution of \eqref{e-MA-1} corresponding to $h(t)$. Namely,
$$
\psi(x,t)=\int_0^t\log \lf(\frac {\omega^n(x,s)}{\omega_0^n(x)}\ri)ds
$$
and $\omega(t)=\omega_0-t\Ric(\omega_0)+\ii\ddbar\psi.$ We want to obtain some a priori estimates. First we list evolution equations of some quantities related to the Chern-Ricci flow.

\begin{lma}\label{l-psi}
Let $\Psi=t\dot\psi-\psi-nt$ and $\Lambda=(S_1-t)\dot\psi+\psi+nt$ where $\dot\psi=\frac{\p}{\p t}\psi$ and $S_1$ is any number. Then
\bee
\heat\Psi=-\tr_{h}h_0,
\eee
and
\bee
\heat\Lambda=-S_1\tr_h(\Ric(\omega_0))+\tr_h h_0
\eee
where $\Delta u=h^\ijb u_\ijb$.
\end{lma}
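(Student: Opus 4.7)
The plan is to prove both identities by first computing $\heat$ on the two basic ingredients $\dot\psi$ and $\psi$, and then assembling the result by a linear combination, using that if $f(t)$ depends only on time then $\heat(f(t) w) = f'(t) w + f(t)\heat w$ for any function $w$.

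For $\dot\psi$, I would differentiate the Monge--Amp\`ere identity $\dot\psi = \log\det g(t) - \log\det g_0$ in $t$ to obtain $\ddot\psi = g^\ijb \p_t g_\ijb = -R$, where $R$ denotes the Chern scalar curvature of $h(t)$. Since $R_\ijb = -\p_i\p_{\bar j}\log\det g$ by the definition recalled in Section~\ref{s-intro}, taking $\ii\ddbar$ of the same identity gives
\[
\p_i\p_{\bar j}\dot\psi = -R_\ijb + R^{(0)}_\ijb,
\]
where $R^{(0)}_\ijb$ is the Chern-Ricci of $g_0$. Contracting with $h^\ijb$ yields $\Delta\dot\psi = -R + \tr_h\Ric(\omega_0)$, and subtraction gives the first building block
\[
\heat\dot\psi = -\tr_h\Ric(\omega_0).
\]

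For $\psi$, I would read off from $\omega(t) = \omega_0 - t\Ric(\omega_0) + \ii\ddbar\psi$ the pointwise identity $\psi_\ijb = h_\ijb - (h_0)_\ijb + t R^{(0)}_\ijb$, so that $\Delta\psi = h^\ijb\psi_\ijb = n - \tr_h h_0 + t\tr_h\Ric(\omega_0)$. Combined with $\p_t\psi = \dot\psi$, this gives the second building block
\[
\heat\psi = \dot\psi - n + \tr_h h_0 - t\tr_h\Ric(\omega_0).
\]

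With these two formulas in hand, the statement follows by pure bookkeeping. For $\Psi = t\dot\psi - \psi - nt$, the time-dependent product rule yields $\heat\Psi = \dot\psi + t\heat\dot\psi - \heat\psi - n$; substituting the two building blocks the terms $\dot\psi$, $n$, and $t\tr_h\Ric(\omega_0)$ all cancel, leaving $-\tr_h h_0$. For $\Lambda = (S_1 - t)\dot\psi + \psi + nt$, the analogous computation gives $\heat\Lambda = -\dot\psi + (S_1-t)\heat\dot\psi + \heat\psi + n$, and after substitution the $\dot\psi$, $n$, and the $t\tr_h\Ric(\omega_0)$ terms again combine to leave exactly $-S_1\tr_h\Ric(\omega_0) + \tr_h h_0$. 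There is no real obstacle here: the proof is an accounting exercise, and the only care required is in the sign convention $R_\ijb = -\p_i\p_{\bar j}\log\det g$ and in not forgetting the $\dot\psi$ term produced by $\p_t$ hitting the coefficient $t$ or $(S_1-t)$.
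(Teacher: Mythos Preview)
Your proof is correct and is exactly the ``direct computation'' the paper alludes to (the paper gives no details beyond citing \cite{TosattiWeinkove2015}); you have simply written out the standard derivation of $\heat\dot\psi=-\tr_h\Ric(\omega_0)$ and $\heat\psi=\dot\psi-n+\tr_h h_0-t\tr_h\Ric(\omega_0)$ and combined them linearly.
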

\begin{proof} Direct computations, see \cite{TosattiWeinkove2015} for example.
\end{proof}

Next let $\hat h$ be another fixed Hermitian metric.  We use the following notations:

\begin{itemize}
  \item $\hat \nabla$ is the derivative with respect to the Chern connection of $\hat h$.
  \item $\hat T, T_0$  are the torsions of $\hat h$, $h_0$ respectively
  \item $\hat R$ is the curvature of $\hat h$.
\end{itemize}
We want to compute $\heat \tr_{\hat h}h$ which has been obtained in \cite{TosattiWeinkove2015}. For later application, we also want to compute   $\heat \tr_{h}\hat h$.

\begin{lma}\label{l-trace} Let $\Upsilon=\tr_{\hat h}h$, and $\Theta=\tr_h\hat h$.

\begin{enumerate}
  \item [(i)]
  \bee
  \heat \log \Upsilon=\mathrm{I+II+III}
  \eee
  where
  \bee
\begin{split}
\mathrm{I}\le &2\Upsilon^{-2}\text{\bf Re}\lf(\hat h^{i\bar l}(h_0)_{p\bar l}h^{k\bar q}( T_0)_{ki}^p\hat \nabla_{\bar q}\Upsilon\ri).
\end{split}
\eee
\bee
\begin{split}
\mathrm{II}=&\Upsilon^{-1} h^\ijb \hat h^{k\bar l}h_{k\bar q} \lf(\hat \nabla_i \ol{(\hat T)_{jl}^p}- \hat h^{p\bar q}\hat R_{i\bar lp\bar j}\ri)\\
\end{split}
\eee
and
  \bee
\begin{split}
\mathrm{III}=&-\Upsilon^{-1} h^{\ijb}\hat h^{k\bar l}\lf(\hat \nabla_i\lf(\ol{( T_0)_{jl}^p}(h_0)_{k\bar p}\ri) +\hat \nabla_{\bar l}\lf( {(  T_0)_{ik}^p}(h_0)_{p\bar j}\ri)-\ol{ (\hat T)_{jl}^q}(  T_0)_{ik}^p(h_0)_{p\bar q} \ri)
\end{split}
\eee

  \item[(ii)]
  Suppose $\mathrm{BK}(\hat h)\le K$. Hence there exist positive constants  $C(n), C'(n)$ depending only on $n$, such that

\bee
\begin{split}
\heat\Theta\le& C(n)\Theta^2\lf(|\bar \p \hat T|_{\hat h}+K\ri)\\
\\&+C'(n)\Theta^3\lf(|\bar\p T_0|_{\hat h}|h_0|_{\hat h}+|T_0|_{\hat h}|\hat \nabla h_0|_{\hat h}+\Lambda^2\Upsilon|T_0|^2_{\hat h} + \Upsilon|\hat T|_{\hat h}^2\ri).
\end{split}
\eee
for some positive constants $C(n), C'(n)$ depending only on $n$, where $\Lambda=\tr_{h}h_0$
\bee
|\bar \p \hat T|_{\hat h}^2=\sum_{i,j,k,l}|\hat\nabla_{\bar l}\hat T_{ij}^k|^2
\eee
in a unitary frame of $\hat h$, and $|\bar\p T_0|_{\hat h}$ is defined similarly.
\end{enumerate}

\end{lma}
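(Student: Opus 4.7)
The plan for (i) is to adapt the computation of \cite[Proposition 3.1]{TosattiWeinkove2015} to the situation where $\hat h$ plays the role of background metric distinct from $h_0$. First I would compute $\p_t\Upsilon = -\hat h^{i\bar j}R^h_{i\bar j}$ directly from the flow, and then expand $\Delta\Upsilon = h^{p\bar q}\p_p\p_{\bar q}(\hat h^{i\bar j}h_{i\bar j})$ by commuting the partials with the Chern connection $\hat\nabla$ of $\hat h$; these commutators produce precisely the curvature $\hat R$ and torsion $\hat T$ pieces in II. The Chern-Ricci of $h$ is then rewritten via the flow as $R^h_{i\bar j} = R^{h_0}_{i\bar j} - \p_i\p_{\bar j}\dot\psi$, and expressing $R^{h_0}_{i\bar j} = -\p_i\p_{\bar j}\log\det h_0$ in terms of $\hat\nabla$ introduces the $T_0$ torsion terms that make up III. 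What remains is a first-order piece matching the bracket inside I; passing from $\Upsilon$ to $\log\Upsilon$ via $\heat\log\Upsilon = \Upsilon^{-1}\heat\Upsilon + \Upsilon^{-2}|\nabla\Upsilon|_h^2$ lets Cauchy-Schwarz absorb half of it and produces the stated form of I.

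For part (ii) the strategy is to compute $\heat\Theta$ directly at a point. I would choose a $\hat h$-unitary frame there so that $\hat h_{i\bar j}=\delta_{ij}$ and $\hat\nabla\hat h = 0$ at the point, and simultaneously diagonalize $h$ so that $h_{i\bar j} = \lambda_i\delta_{ij}$; then $\Theta = \sum_i \lambda_i^{-1}$. A direct computation gives
$$
\p_t\Theta = \hat h_{i\bar j}\,\p_t h^{i\bar j} = h^{i\bar l}h^{k\bar j}R^h_{k\bar l}\hat h_{i\bar j},
$$
and the Chern Laplacian is expanded using
$$
\p_p\p_{\bar q}h^{i\bar j} = -h^{i\bar a}(\p_p\p_{\bar q}h_{k\bar a})h^{k\bar j} + (\text{non-negative gradient-squared pieces}).
$$
Substituting the flow equation $\p_t h_{k\bar l} = -R^h_{k\bar l}$ causes the leading second-order terms to cancel against $\p_t\Theta$, leaving three classes of residual contributions: (a) a curvature piece from expressing $R^h$ via $\hat\nabla$, which under $\mathrm{BK}(\hat h)\le K$ is bounded by a multiple of $K\Theta^2$ after using simultaneous diagonalization; (b) commutator terms producing the $(|\bar\p\hat T|_{\hat h}+K)\Theta^2$ and $|\hat T|_{\hat h}^2\,\Upsilon\Theta^3$ contributions; and (c) initial-data terms coming from $R^{h_0}_{i\bar j}$ written in $\hat\nabla$-form, producing the $T_0$, $\bar\p T_0$, $h_0$, $\hat\nabla h_0$ and $\Lambda$ contributions on the second line of the bound.

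The principal obstacle I anticipate is bookkeeping. After the substitutions above there is a collection of mixed first-order torsion cross-terms whose absorption into the non-negative gradient squares via weighted Cauchy-Schwarz must be arranged so that only the stated polynomial dependence on $\Theta$, $\Upsilon$ and $\Lambda$ emerges, with no stray higher powers. The simultaneous $\hat h$-unitary, $h$-diagonal frame makes the diagonal bisectional components $\hat R_{i\bar ik\bar k}$ directly visible, so the upper bound $\hat R_{i\bar ik\bar k}\le K$ cleanly yields the $K\Theta^2$ piece, and it also reduces the number of distinct index contractions one must track to a manageable size. Once this is organized, combining the contributions from (a), (b), (c) gives the stated inequality.
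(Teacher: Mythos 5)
Your overall framework for (ii) --- simultaneous $\hat h$-unitary, $h$-diagonal frame, cancellation of the second-order terms, absorption of the torsion cross terms into the negative gradient squares by Cauchy--Schwarz, and using $\mathrm{BK}(\hat h)\le K$ after a torsion correction of the form $\hat R_{i\bar kk\bar i}=\hat R_{k\bar ki\bar i}-\hat\nabla_{\bar k}\hat T_{ki\bar i}$ --- is the same as the paper's. But there is a genuine gap at the central step. The cancellation of second-order terms between $\p_t\Theta$ and $\Delta\Theta$ is not automatic: in $\p_t\Theta=h^{i\bar l}h^{k\bar j}\hat h_{i\bar j}R_{k\bar l}$ the second derivatives of $h$ occur as $h^{a\bar b}\p_k\p_{\bar l}h_{a\bar b}$ with the differentiation indices contracted against $h^{i\bar l}h^{k\bar j}\hat h_{i\bar j}$, while in $\Delta\Theta$ they occur as $h^{i\bar j}\hat\nabla_i\hat\nabla_{\bar j}h_{p\bar q}$ with the differentiation indices contracted against $h^{i\bar j}$. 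To make them cancel one must interchange the differentiated indices with the traced ones, i.e.\ convert $\hat\nabla_i\hat\nabla_{\bar j}h_{p\bar q}$ into $\hat\nabla_{\bar q}\hat\nabla_p h_{i\bar j}$ plus lower-order terms, and this swap relies on the fact that along the Chern--Ricci flow $\p\omega(t)=\p\omega_0$, which yields the identity $\hat\nabla_ih_{k\bar j}-\hat\nabla_kh_{i\bar j}=(h_0)_{s\bar j}(T_0)_{ik}^s-h_{s\bar j}\hat T_{ik}^s$ (this is exactly the content of the commutation formula from Tosatti--Weinkove, p.~135, that the paper invokes). That identity, and its $\hat\nabla_{\bar q}$-derivative, is the true source of every $T_0$, $\bar\p T_0$, $h_0$, $\hat\nabla h_0$ term in the stated bound; your proposal never invokes it, so the claimed cancellation does not go through as written.

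Relatedly, your mechanism (c) --- attributing these terms to ``$R^{h_0}_{i\bar j}$ written in $\hat\nabla$-form'' --- does not work: $R^{h_0}$ does not enter the evolution of $\Theta$ at all (only $R_{k\bar l}=-\p_k\p_{\bar l}\log\det h$ does), and rewriting $-\p_i\p_{\bar j}\log\det h_0$ covariantly would produce full second $\hat\nabla$-derivatives of $h_0$, which appear nowhere in, and are not controlled by, the right-hand side of (ii) (the hypotheses only control $|T_0|_{\hat h}$, $|\bar\p T_0|_{\hat h}$, $|h_0|_{\hat h}$, $|\hat\nabla h_0|_{\hat h}$). The same misattribution occurs in your sketch of (i); note that the paper does not reprove (i) but cites \cite[Proposition 3.1]{TosattiWeinkove2015}, where the $T_0$ terms likewise come from the flow identity above rather than from a covariant rewriting of $R^{h_0}$. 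Once you insert that identity at the commutation step, the rest of your plan (weighted Cauchy--Schwarz into the gradient squares, the $K\Theta^2$ bound for the curvature piece, and the conversions $|T_0|^2_h\le\Theta^2\Upsilon|T_0|^2_{\hat h}$, $|\hat T|^2_h\le\Theta^2\Upsilon|\hat T|^2_{\hat h}$) does go through essentially as in the paper.
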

\begin{proof} (i) is by Tosatti-Weinkove \cite[Proposition 3.1]{TosattiWeinkove2015}.

(ii)

Since   $\hat \nabla \hat h=0$,

\bee
\begin{split}
\heat \Theta=& \hat h_{k\bar l} h^{k\bar q}h^{p\bar l}\lf(- \p_p\p_{\bar q}\log\det h+h^{\ijb} \hat\nabla_{i}\hat \nabla_{\bar j}h_{p\bar q}\ri)  \\
&+h^{\ijb}\hat h_{k\bar l}h^{k\bar q} (\hat \nabla_i h^{p\bar l})(\hat \nabla_{\bar j}h_{p\bar q})+h^{\ijb}\hat h_{k\bar l} h^{p\bar l} (\hat \nabla_ih^{k\bar q})(\hat \nabla_{\bar j}h_{p\bar q})
\end{split}
\eee

By \cite[p.135]{TosattiWeinkove2015},
\bee
\begin{split}
\hat\nabla_{i}\hat \nabla_{\bar j}h_{p\bar q}=&\hat\nabla_{\bar q}\hat\nabla_ph_\ijb-\hat R_{i\bar qp\bar s}\hat h^{m\bar s}h_{m\bar j}+\hat R_{i\bar qs\bar j}\hat h^{s\bar m s}h_{p\bar m}\\
&+\hat \nabla_{\bar q}\lf((T_0)_{ip}^m (g_0)_{m\bar j}\ri)-(\hat\nabla_{\bar q}\hat T_{ip}^m)h_{m\bar j}
-\hat T_{ip}^m \hat\nabla_{\bar q}g_{m\bar j}\\
&+\hat\nabla_i(\ol{(T_0)_{jq}^s}(h_0)_{p\bar s})-(\hat\nabla_i\ol{\hat T_{jq}^s})h_{p\bar s}-\ol{\hat T_{jq}^s}\hat\nabla_ih_{p\bar s}.
\end{split}
\eee

\bee
\begin{split}
\p_p\p_{\bar q}\log\det h=&h^\ijb\lf[ \hat\nabla_{\bar q}\hat\nabla_p h_\ijb-\hat R_{p\bar qi\bar s}\hat h^{m\bar s}h_{m\bar j}\ri]-h^{m\bar j}h^{i\bar s} \hat\nabla_p h_\ijb\hat\nabla_{\bar q}h_{m\bar s}.
\end{split}
\eee

Hence
\be\label{e-theta-1}
\begin{split}
&\heat \Theta\\
=&\hat h_{k\bar l}h^{k\bar q}h^{p\bar l}h^{m\bar j}h^{i\bar s} \hat\nabla_p h_\ijb \hat\nabla_{\bar q}h_{m\bar s} +\hat h_{k\bar l}h^{\ijb} h^{p\bar l}h^{k\bar q}\bigg[\hat \nabla_{\bar q}\lf((T_0)_{ip}^m (h_0)_{m\bar j}\ri)+\hat\nabla_i(\ol{(T_0)_{jq}^s}(h_0)_{p\bar s})
 \\
&-(\hat\nabla_{\bar q}\hat T_{ip}^m)h_{m\bar j}
-\hat T_{ip}^m \hat\nabla_{\bar q}h_{m\bar j}-(\hat\nabla_i\ol{\hat T_{jq}^s})h_{p\bar s}-\ol{\hat T_{jq}^s}\hat\nabla_ih_{p\bar s}\bigg]\\
&+\hat h_{k\bar l} h^{k\bar q}h^{p\bar l} \hat\nabla_{\bar q}\hat T_{pi}^i +\hat h_{k\bar l}h^\ijb h^{k\bar q}\hat h^{s\bar l}\hat R_{i\bar qs\bar j}\\
&+h^{\ijb}\hat h_{k\bar l}h^{k\bar q} (\hat \nabla_i h^{p\bar l})(\hat \nabla_{\bar j}h_{p\bar q})+h^{\ijb}\hat h_{k\bar l} h^{p\bar l} (\hat \nabla_ih^{k\bar q})(\hat \nabla_{\bar j}h_{p\bar q}).
\end{split}
\ee
where we have used the fact that

$$
\hat R_{p\bar qi\bar s}-\hat R_{i\bar qp\bar s}=\hat\nabla_{\bar q}\hat T_{pi\bar s}=\hat h_{\bar sr}\hat\nabla_{\bar q}\hat T_{pi}^r.
$$
Choose a unitary basis so that $h_{\ijb}=\delta_{ij}, \hat h_{ij}=\lambda_{i}\delta_{ij}$, then
\bee
\hat h_{k\bar l}h^{k\bar q}h^{p\bar l}h^{m\bar j}h^{i\bar s} \hat\nabla_p h_\ijb \hat\nabla_{\bar q}h_{m\bar s}=\sum_{i,j,k}\lambda_k\nabla_k h_\ijb \hat\nabla_{\bar k}h_{j\bar i}.
\eee
On the other hand,
\bee
\begin{split}
&h^{\ijb}\hat h_{k\bar l} h^{p\bar l} (\hat \nabla_ih^{k\bar q})(\hat \nabla_{\bar j}h_{p\bar q})+h^{\ijb}\hat h_{k\bar l}h^{k\bar q} (\hat \nabla_i h^{p\bar l})(\hat \nabla_{\bar j}h_{p\bar q})\\
= &-2\lambda_k\hat \nabla_ih_{k\bar j}\hat \nabla_{\bar i}h_{j\bar k}\\
=&-2\lambda_k\lf(\hat\nabla_kh_{i\bar j}+\lf((h_0)_{s\bar j}(T_0)_{ik}^s-h_{s\bar j}\hat T_{ik}^s\ri) \ri)\lf(\hat\nabla_{\bar k}h_{j\bar i}+\ol{\lf((h_0)_{s\bar j}(T_0)_{ik}^s-h_{s\bar j}\hat T_{ik}^s\ri) } \ri)\\
\le &-\frac32\sum_{i,j,k} \lambda_k|\hat\nabla_k h_\ijb|^2+c_1(n)\Theta\lf(\Lambda^2|T_0|^2_{ h}+ |\hat T|^2_h\ri).
\end{split}
\eee
for some constant $c_1(n)$ depending only on $n$, where we have used the fact that
\bee
\begin{split}
\hat\nabla_ih_{k\bar j}-\hat\nabla_kh_{i\bar j}= (h_0)_{s\bar j}(T_0)_{ik}^s-h_{s\bar j}\hat T_{ik}^s .
\end{split}
\eee

\bee
\begin{split}
-{ \hat h_{k\bar l}h^{\ijb} h^{p\bar l}h^{k\bar q}
(\hat T_{ip}^m \hat\nabla_{\bar q}h_{m\bar j}+\ol{\hat T_{jq}^s}\hat\nabla_ih_{p\bar s}) }
=&
-\sum_{i,j,k}\lambda_k(\hat T_{ik}^j \hat\nabla_{\bar k}h_{j\bar i}+\ol{\hat T_{ik}^j}\hat\nabla_ih_{k\bar j}) \\
\le &\frac12\sum_{i,j,k}\lambda_k|\hat\nabla_kh_{i\bar j}|^2  +c_2(n)\Theta\lf(\Lambda^2|T_0|^2_h+|\hat T|_h^2\ri)
\end{split}
\eee
for some $c_2(n)$ depending only on $n$.

So we have
\be\label{e-theta-2}
\begin{split}
&\heat \Theta\\
\le & \hat h_{k\bar l}h^{\ijb} h^{p\bar l}h^{k\bar q}\bigg[\hat \nabla_{\bar q}\lf((T_0)_{ip}^m (h_0)_{m\bar j}\ri)+\hat\nabla_i(\ol{(T_0)_{jq}^s}(h_0)_{p\bar s})
 -(\hat\nabla_{\bar q}\hat T_{ip}^m)h_{m\bar j}
 -(\hat\nabla_i\ol{\hat T_{jq}^s})h_{p\bar s} \bigg]\\
&+\hat h_{k\bar l} h^{k\bar q}h^{p\bar l} \hat\nabla_{\bar q}\hat T_{pi}^i +\hat h_{k\bar l}h^\ijb h^{k\bar q}\hat h^{s\bar l}\hat R_{i\bar qs\bar j}  +c_4(n)\Theta\lf(\Lambda^2|T_0|^2_{ h}+ |\hat T|^2_h\ri).
\end{split}
\ee
for some $c_3>0$ depending only on $n$.
Now choose unitary frame so that $\hat h_\ijb=\delta_{ij}$, $h_\ijb=\sigma_i\delta_{ij}$, then we can
\bee
\begin{split}
\hat h_{k\bar l}h^\ijb h^{k\bar q}\hat h^{s\bar l}\hat R_{i\bar qs\bar j}=&\sigma_i^{-1}\sigma_k^{-1}\hat R_{i\bar kk\bar i}\\
=&\sigma_i^{-1}\sigma_k^{-1}\lf(\hat R_{k\bar ki\bar i}-\hat\nabla_{\bar k}\hat T_{ki\bar i}\ri)\\
=&\sigma_i^{-1}\sigma_k^{-1}\lf(\hat R_{k\bar ki\bar i}-\hat\nabla_{\bar k}(\hat h_{p\bar i}\hat  T_{ki}^p)\ri).
\end{split}
\eee
Also,
\bee
|T_0|^2_{ h}\le  \Theta^2 \Upsilon |T_0|_{\hat h}^2, \ |\hat T|^2_{ h}\le  \Theta^2 \Upsilon |\hat T|_{\hat h}^2.
\eee
Hence if $\mathrm{BK}(\hat h)\le   K$, then
\be\label{e-theta-3}
\begin{split}
\heat\Theta\le& C(n)\Theta^2\lf(|\bar \p \hat T|_{\hat h}+K\ri)\\
\\&+C'(n)\Theta^3\lf(|\bar\p T_0|_{\hat h}|h_0|_{\hat h}+|T_0|_{\hat h}|\hat \nabla h_0|_{\hat h}+\Lambda^2\Upsilon|T_0|^2_{\hat h} + \Upsilon|\hat T|_{\hat h}^2\ri).
\end{split}
\ee
From this (ii) follows.

\end{proof}

We have the following maximum principle.
\begin{lma}\label{l-max} Let $(M^n,h_0)$ be a complete noncompact Hermitian manifold satisfying condition
{\bf (a1)}: There exists a smooth real function $\rho$ which is uniformly equivalent to the distance function from a fixed point such that $|\p\rho|_{h_0}, |\ddbar \rho|_{h_0}$ are uniformly bounded. Suppose $h(t)$ is a solution to the Chern-Ricci flow with initial metric $h(0)=h_0$ on $M\times[0,S)$. Assume for any $0<S_1<S$, there is $C>0$ such that
$$
C^{-1}h(t)\le h_0\le Ch(t)
$$
for $0<t\le S_1$.
Let $f$ be  a smooth function    on $M\times[0,S)$ which is bounded from above such that
$$
\heat f\le0
$$
on $\{f>0\}$. Suppose  $f\le 0$ at $t=0$, then $f\le 0$ on $M\times[0,S)$.
\end{lma}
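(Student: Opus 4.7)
The plan is to run the standard Omori--Yau-type barrier argument, using the exhaustion function $\rho$ from condition \textbf{(a1)} to reduce a noncompact maximum principle to one on a compact sublevel set. Fix any $S_1\in(0,S)$; it suffices to prove $f\le 0$ on $M\times[0,S_1]$. By translating $\rho$ by a constant we may assume $\rho\ge 1$, so that $\rho$ is a proper positive function on $M$.

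For $\epsilon>0$ and a constant $A>0$ to be chosen, set
\[
F_\epsilon(x,t)=f(x,t)-\epsilon\bigl(\rho(x)+At\bigr).
\]
Since $f$ is bounded above and $\rho(x)\to\infty$ as $x\to\infty$, the set $\{F_\epsilon>0\}$ has $\rho$-bounded spatial projection, hence is relatively compact in $M\times[0,S_1]$. If $\sup F_\epsilon\le 0$ there is nothing to do; otherwise the supremum is attained at some interior point $(x_0,t_0)$ with $F_\epsilon(x_0,t_0)>0$. At $t=0$ we have $F_\epsilon\le f\le 0$, so $t_0>0$, and $F_\epsilon(x_0,t_0)>0$ forces $f(x_0,t_0)>\epsilon(\rho(x_0)+At_0)>0$. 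The hypothesis therefore gives $\heat f(x_0,t_0)\le 0$, while the first-order/second-order conditions at a spacetime maximum yield $\heat F_\epsilon(x_0,t_0)\ge 0$.

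Now compute
\[
\heat F_\epsilon=\heat f-\epsilon A+\epsilon\,\Delta_{h(t)}\rho.
\]
Here is where the uniform equivalence $C^{-1}h_0\le h(t)\le Ch_0$ on $M\times[0,S_1]$ enters: since $\Delta_{h(t)}\rho=h^{i\bar j}(t)\,\partial_i\partial_{\bar j}\rho$ involves only ordinary mixed partials of the function $\rho$, condition \textbf{(a1)} combined with the uniform equivalence of metrics gives a bound $|\Delta_{h(t)}\rho|\le C_0$ on $M\times[0,S_1]$ with $C_0$ independent of $(x,t)$ and of $\epsilon$. Choosing $A=C_0+1$ yields $\heat F_\epsilon(x_0,t_0)\le -\epsilon<0$, contradicting $\heat F_\epsilon(x_0,t_0)\ge 0$. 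Hence $F_\epsilon\le 0$ on $M\times[0,S_1]$, i.e.\ $f\le\epsilon(\rho+At)$ everywhere; letting $\epsilon\downarrow 0$ at each fixed $(x,t)$ gives $f\le 0$. Since $S_1<S$ was arbitrary, the lemma follows.

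The one delicate point is the uniform control of $\Delta_{h(t)}\rho$, which is the reason the assumptions \textbf{(a1)} and uniform equivalence $C^{-1}h_0\le h(t)\le Ch_0$ appear together in the statement; everything else is the usual weak maximum principle combined with a linear-in-time barrier to absorb the $\Delta\rho$ error. I do not expect any obstacle beyond organizing these estimates, and in particular no regularity issues arise because $f$ is assumed smooth and the maximum is attained at an interior point of a relatively compact set.
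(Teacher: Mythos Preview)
Your proof is correct and follows essentially the same barrier argument as the paper's: both use the exhaustion $\rho$ from \textbf{(a1)} together with the uniform equivalence of $h(t)$ and $h_0$ to bound $\Delta_{h(t)}\rho$, then subtract $\epsilon$ times a time-corrected barrier to force a contradiction at an interior maximum. The only cosmetic difference is that the paper uses the multiplicative barrier $\epsilon e^{2C_2 t}\rho$ whereas you use the additive one $\epsilon(\rho+At)$; both choices absorb the $\Delta\rho$ error equally well on a bounded time interval.
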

\begin{proof} The proof is standard and we may assume that $h(t)$ is a solution up to $S$ and is uniformly equivalent to $h_0$ on $M\times [0,S]$.  Let $r(\cdot)$ be the distance function from a fixed point $x_0$. We may assume that there is $C_1>0$ such that
$$
C_1^{-1}(1+r(x))\le \rho(x)\le C_1(1+r(x)).
$$
and $\rho\ge 1$.
Since $h(t)$ is uniformly equivalent to $h_0$, there is $C_2>0$ such that $\Delta \rho\le C_2$.
Hence
\bee
\heat (e^{2C_2t}\rho)=e^{2C_2t}\lf(2C_2\rho-C_2\ri)\ge C_2e^{2C_2t} \rho
\eee
For any $\e>0$, if the $\sup_{M\times[0,T]}(f-\e  e^{2C_2t}\rho)>0$, then there is $(x_0,t_0)$ such with $t_0>0$ such that $f-\e  e^{2C_2t}\rho\le 0$ on $M\times[0,t_0]$ and $f-\e  e^{2C_2t}\rho=0$ at $(x_0,t_0)$. In particular, $f(x_0,t_0)>0$. Hence at $(x_0,t_0)$ we have
$$
0\le \heat(f-\e  e^{2C_2t}\rho)<0,
$$
which is impossible. Since $\e$ is arbitrary, we conclude that the lemma is true.
\end{proof}

Assume that $h_0$ satisfies {\bf (a1), (a2)} (with $g_0$ replaced by $h_0$). Let $S$ be the constant and let $u$ be the bounded function in {\bf (a2)}. Suppose $h(t)$ is a solution to the
Chern-Ricci flow on $M\times[0,S_1]$ with $S_1<S$ with initial data $h_0$ so that $h(t)$ is uniformly equivalent to $h_0$ on $M\times[0,S_1]$.    Let $\psi$ be the corresponding solution to \eqref{e-MA-1}. Then  $\dot\psi$ and hence $ \psi$ are uniformly bounded on $M\times[0,S_1]$.

\begin{lma}\label{l-psi-est} Suppose the curvature of $h_0$ with $\mathrm{BK}(h_0)\ge -K$ for any unitary frame at any point in $M$. Then there is a constant $c_1(n)>0$ depending only on $n$ such that for $t\le S_1$,
 \begin{enumerate}
   \item [(i)] $\psi \le \lf(\log (1+c_1(n)KS_1)^n+1\ri)t$
   \item [(ii)]  $\dot\psi\le \lf(\log (1+c_1(n)KS_1)^n+1\ri)+n$.
   \item [(iii)] $$\dot\psi(x,t)\ge \frac1{S-S_1} \lf[\inf_Mu-\sup_Mu- \lf(\log (1+c_1(n)KS_1)^n+1+n\ri)t\ri], $$
       and
       $$
       \psi(x,t)\ge \frac1{S-S_1}\int_0^t\lf[\inf_Mu-\sup_Mu- \lf(\log (1+c_1(n)KT)^n+1+n\ri)s\ri]ds.
       $$
 \end{enumerate}

\end{lma}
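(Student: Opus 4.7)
Plan: The four estimates (i)--(iii) will follow from the parabolic Monge-Amp\`ere equation \eqref{e-MA-1} combined with the maximum principle Lemma \ref{l-max}. The upper bound on $\psi$ needs a direct barrier argument through the MA equation, while the other bounds come from applying Lemma \ref{l-max} to the auxiliary functions $\Psi$ and $\Lambda$ produced by Lemma \ref{l-psi}. Throughout I will use that $\dot\psi$, $\psi$ and $u$ are bounded on $M\times[0,S_1]$, as noted just before the statement.

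For (i), using \textbf{(a1)} and a harmless constant shift I may assume $\rho\ge 0$ with $\rho\to\infty$ at infinity and $|\ddbar\rho|_{h_0}\le A_1$. Set $C:=n\log(1+nKS_1)+1$ and consider $F_\e:=\psi-Ct-\e\rho$; this is bounded above on $M\times[0,S_1]$ and tends to $-\infty$ at infinity, so attains its supremum at some $(x_0,t_0)$. If $t_0=0$ then $F_\e\le 0$ there; if $t_0>0$, the first-order conditions give $\dot\psi\ge C$ and $\ii\ddbar\psi\le\e\,\ii\ddbar\rho$ at $(x_0,t_0)$. The assumption $\mathrm{BK}(h_0)\ge -K$ forces $\Ric(\omega_0)\ge -nK\omega_0$, hence $\omega_0-t\Ric(\omega_0)\le (1+nKt)\omega_0$, and the MA equation yields
$$
C\le \dot\psi=\log\frac{(\omega_0-t\Ric(\omega_0)+\ii\ddbar\psi)^n}{\omega_0^n}\le n\log\bigl(1+nKS_1+\e A_1\bigr).
$$
For $\e$ small this contradicts the choice of $C$, so $\sup F_\e\le 0$; letting $\e\to 0$ gives (i) with $c_1(n)=n$. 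For (ii), Lemma \ref{l-psi} gives $\heat\Psi=-\tr_h h_0\le 0$ and $\Psi(0)=0$, so Lemma \ref{l-max} gives $\Psi\le 0$, i.e.\ $\dot\psi\le \psi/t+n$ for $t>0$; together with (i) this gives (ii), the $t=0$ case being trivial since $\dot\psi(0)=0$.

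For the lower bound in (iii) I would exploit \textbf{(a2)}. With $\Lambda:=(S-t)\dot\psi+\psi+nt$ (Lemma \ref{l-psi} with $S_1=S$), tracing the inequality $\omega_0-S\Ric(\omega_0)+\ii\ddbar u\ge\beta\omega_0$ against $h$ gives $-S\tr_h\Ric(\omega_0)\ge(\beta-1)\tr_h h_0-\Delta u$, so
$$
\heat(\Lambda-u)=-S\tr_h\Ric(\omega_0)+\tr_h h_0+\Delta u\ge \beta\tr_h h_0\ge 0.
$$
Since $\Lambda-u$ is bounded and equals $-u\ge-\sup_M u$ at $t=0$, applying Lemma \ref{l-max} to $-(\Lambda-u)-\sup_M u$ yields $\Lambda-u\ge-\sup_M u$ on $M\times[0,S_1]$. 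Rearranging and inserting $\psi\le Ct$ from (i),
$$
(S-t)\dot\psi\ge \inf_M u-\sup_M u-(C+n)t.
$$
The right-hand side is non-positive and $S-t\ge S-S_1>0$, so $1/(S-t)\le 1/(S-S_1)$ and dividing gives the stated bound on $\dot\psi$; integrating in time then gives the claimed lower bound on $\psi$.

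The main obstacle is part (i): because $\heat(\psi-Ct)$ contains $\Delta\psi$ with no direct sign control, Lemma \ref{l-max} cannot be applied to $\psi$ itself, and one must go through the Monge-Amp\`ere equation using the barrier $\rho$ from \textbf{(a1)} to force the supremum of $F_\e$ to be attained on the noncompact manifold. The lower bisectional-curvature bound enters only through the elementary estimate $\omega_0-t\Ric(\omega_0)\le(1+nKt)\omega_0$ that caps the reference form from above, and the assumption that $u$ is bounded is used for both the $t=0$ initial value and the boundedness needed in the min-principle argument.
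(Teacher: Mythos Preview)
Your proof is correct and follows essentially the same route as the paper: for (i) you use the barrier $\psi-Ct-\e\rho$ together with the Monge--Amp\`ere equation and the bound $-\Ric(\omega_0)\le c(n)K\omega_0$ coming from $\mathrm{BK}(h_0)\ge -K$; for (ii) you apply the maximum principle to $\Psi=t\dot\psi-\psi-nt$; and for (iii) you apply it to $-(\Lambda-u)$ with $\Lambda=(S-t)\dot\psi+\psi+nt$, exactly as the paper does. The only cosmetic differences are that you make the constant $c_1(n)=n$ explicit and spell out the sign argument $\text{RHS}/(S-t)\ge \text{RHS}/(S-S_1)$ in (iii), which the paper leaves implicit.
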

\begin{proof} We   modify the proof in \cite{TosattiWeinkove2015}. For $\e>0$, let $\Phi=\psi-At-\e \rho$ where $A>0$ to be determined. Then $\Phi<0$ on $(M\setminus \Omega\times[0,S_1])  $ for some compact set $\Omega$.  Hence if $\sup_{M\times[0,S_1]}\Phi>0$, then there is $(x_0,t_0)\in \Omega\times[0,S_1]$ with $t_0>0$ such that
$\Phi(x_0,t_0)=\sup_{M\times[0,S_1]}\Phi$. At $(x_0,t_0)$, we have $\ii(\ddbar \psi-\e\ddbar\rho)\le 0$, and we have
\bee
\begin{split}
0\le&\dps{\frac{\p}{\p t}\Phi } =  \dps{\log \frac{(\omega_0-t_0\Ric(\omega_0)+\ii\p\bar\p \psi)^n}{\omega_0^n} }-A.
\end{split}
\eee
 Now
 \bee
 \begin{split}
 \omega_0-t_0\Ric(\omega_0)+\ii\p\bar\p \psi\le &\omega_0-t_0\Ric(\omega_0)+ \ii\e\ddbar\rho \\
 \le&(1+c(n)Kt_0+\e C_1) \omega_0
 \end{split}
 \eee
for some positive constant $c(n)$ depending only on $n$ and a positive constant  $C_1$ so that $\ii\ddbar\rho\le C_1\omega_0$. Here we have used the fact that $\mathrm{BK}(h_0)\ge -K$ and $|\ddbar\rho|_{h_0}$ is bounded. Hence if $A=\log (1+c(n)KS_1)^n+1$, we have a contradiction when $\e$ is small enough. Therefore $\Phi\le 0$ is $\e$ is small enough. Let $\e\to0$, we conclude that (i) is true.

(ii) By Lemmas \ref{l-psi} and \ref{l-max}, we conclude that
$t\dot\psi-\psi-nt\le0$ because $\dot\psi, \psi$ are bounded and $\psi=0$ at $t=0$. From this and (i), (ii) follows.

(iii) Let $\Lambda=(S-t)\dot\psi+\psi+nt$. By Lemma \ref{l-psi},
\bee
\begin{split}
\heat (\Lambda-u)=&-S\tr_h(\Ric(\omega_0))+\tr_hh_0+\Delta u\\
\ge &-\tr_h(\omega_0+\ii\ddbar u)+\tr_hh_0+\Delta u\\
= &0.
\end{split}
\eee
Here we have used {\bf (a2)}. Since $\Lambda, u$ are bounded, we conclude by Lemma \ref{l-max} that
$$
\Lambda-u\ge  \inf_{M, t=0}(\Lambda-u)=-\sup_M u.
$$
From this and (i), we conclude that (iii) is true.
\end{proof}

\begin{lma}\label{l-trace-est-1} Let $h_0$ be as in Lemma \ref{l-psi-est}. Suppose in addition $|T_0|_{h_0}^2, |\bar\p T_0|_{h_0}$ are uniformly bounded by $K_1$. Then there are constants  $c_1(n), c_2(n)$ depending only on $n$ such that for $t\le S_1<S_2<S$, solution $h(t)$ of \eqref{e-CRflow-1} on   $M\times[0,S_1]$  which is uniformly equivalent to $h_0$, then on $M\times[0,S_1]$
\bee
\tr_{h_0}h\le \exp\lf[A+\log\lf(\frac12(c_1+(c_1^2+c_2K_1^2A)^\frac12)\ri)\ri]
\eee
where
$$
A=\a^{-1}(2\mathfrak{m}+1)^2(c_1(K+K_1)+1)
$$
and
$\alpha=1-\dps{\frac{S_2}{S}}$ and $\mathfrak{m}=\sup_{M\times [0,S_1]}|(S_2-t)\dot\psi+\psi+nt-\frac{S_2}Su|$.

\end{lma}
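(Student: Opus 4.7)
The estimate will come from the parabolic maximum principle applied to
\[
F\;=\;\log\Upsilon+\frac{A}{\tilde\Phi}-\e\rho,\qquad \Upsilon=\tr_{h_0}h,\quad \tilde\Phi=(S_2-t)\dot\psi+\psi+nt-\tfrac{S_2}{S}u+\mathfrak{m}+1,
\]
on $M\times[0,S_1]$, with $\rho$ the exhaustion from {\bf (a1)} and $\e>0$ small. By construction $1\le\tilde\Phi\le 2\mathfrak{m}+1$. Exactly as in the proof of Lemma \ref{l-psi-est}(iii), combining Lemma \ref{l-psi} with {\bf (a2)} yields $\heat\tilde\Phi\ge\a\,\tr_h h_0$, whence
\[
\heat(A/\tilde\Phi)\;\le\;-\frac{A\a}{(2\mathfrak{m}+1)^{2}}\tr_h h_0-\frac{2A|\nabla\tilde\Phi|_h^{2}}{\tilde\Phi^{3}}.
\]
This is what supplies the $(2\mathfrak{m}+1)^{2}$ factor in the definition of $A$ and the negative multiple of $\tr_h h_0$ that will absorb the curvature-driven bad terms in $\heat\log\Upsilon$.

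Next I would compute $\heat\log\Upsilon$ using Lemma \ref{l-trace}(i) with $\hat h=h_0$, so that $\hat T=T_0$ and $\hat\nabla h_0=0$. In a unitary frame simultaneously diagonalizing both metrics, the non-``bisectional'' curvature component $\hat R_{i\bar kk\bar i}$ appearing in II is converted through the Bianchi-with-torsion identity $\hat R_{i\bar kk\bar i}=\hat R_{k\bar ki\bar i}+\hat\nabla_{\bar k}\hat T_{ik\bar i}$ into the genuine bisectional curvature $\hat R_{k\bar ki\bar i}\ge-K$ plus a piece controlled by $|\bar\p T_0|_{h_0}\le K_1$; together with the explicit $\bar\p\hat T$ factor in II this gives $\mathrm{II}\le c_1(K+K_1)\tr_h h_0$. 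Term III contributes at most $c_2 K_1\tr_h h_0/\Upsilon$ since every factor inside it is pointwise bounded by $K_1$ (either from $|\bar\p T_0|_{h_0}$ or from $|T_0|_{h_0}^{2}$). Term I has a gradient factor $\hat\nabla\Upsilon$, which I bound by Cauchy-Schwarz and then AM-GM with weight $\eta=A^{-1}$ to obtain $\mathrm{I}\le A^{-1}|\nabla\log\Upsilon|_h^{2}+c_3 A K_1^{2}\tr_h h_0/\Upsilon^{2}$.

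With the choice $A=\a^{-1}(2\mathfrak{m}+1)^{2}(c_1(K+K_1)+1)$ from the statement, $F$ is bounded above on $M\times[0,S_1]$ (uniform equivalence of $h,h_0$), and $-\e\rho\to-\infty$ at spatial infinity, so $F$ attains its supremum at some $(x_0,t_0)$. If $t_0=0$ then $\Upsilon=n$ and $F\le\log n+A$, easily dominated by the claimed bound. Otherwise, $\nabla F=0$ at $(x_0,t_0)$ gives $\nabla\log\Upsilon=A\nabla\tilde\Phi/\tilde\Phi^{2}$, so the positive term $2A|\nabla\tilde\Phi|_h^{2}/\tilde\Phi^{3}$ inside $\heat(A/\tilde\Phi)$ equals $2\tilde\Phi|\nabla\log\Upsilon|_h^{2}/A\ge 2|\nabla\log\Upsilon|_h^{2}/A$, which absorbs the $A^{-1}|\nabla\log\Upsilon|_h^{2}$ produced by I. The absorbing term $-A\a(2\mathfrak{m}+1)^{-2}\tr_h h_0$ dominates $c_1(K+K_1)\tr_h h_0$ by the choice of $A$, and after dividing by $\tr_h h_0>0$ and multiplying by $\Upsilon^{2}$ one is left with the quadratic
\[
\Upsilon^{2}(x_0,t_0)\;\le\;c_2 K_1\,\Upsilon(x_0,t_0)+c_3 K_1^{2}A,
\]
whose positive root gives $\Upsilon(x_0,t_0)\le\tfrac12\bigl(c_2K_1+\sqrt{c_2^{2}K_1^{2}+4c_3K_1^{2}A}\bigr)$. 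Since $\tilde\Phi\ge 1$, $F(x_0,t_0)\le A+\log\Upsilon(x_0,t_0)$, and then $F(x,t)\le F(x_0,t_0)$ together with $\e\to 0$ yields the stated bound.

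The main technical hurdle is the careful bookkeeping in the $\heat\log\Upsilon$ step: using the Bianchi identity with torsion to reduce the full off-diagonal Chern curvature in II to the bisectional component plus a $|\bar\p T_0|$ correction, and tuning the Cauchy-Schwarz weight in I so that the gradient term it produces is precisely absorbed by the positive gradient term that $\heat(A/\tilde\Phi)$ returns at a critical point. Everything else is a standard Yau-Cao-type second-order maximum principle computation.
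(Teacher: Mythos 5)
Your argument is essentially the paper's own proof: the same test function $\log\Upsilon+A\phi^{-1}-\epsilon\,(\text{exhaustion})$, the same use of Lemma \ref{l-trace}(i) with $\hat h=h_0$ (torsion Bianchi identity to reach the bisectional bound), the inequality $\heat\phi\ge\a\tr_h h_0$ from Lemma \ref{l-psi} and {\bf(a2)}, the critical-point/Cauchy--Schwarz absorption, and the final quadratic in $\Upsilon$, so the proposal is correct and follows the same route. The only (harmless) discrepancies are bookkeeping: the paper uses $P=e^{2C_1t}\rho$ so that $\heat P\ge0$, whereas with your bare $-\epsilon\rho$ you must keep the $\epsilon\Delta\rho$ term and the $\epsilon\nabla\rho$ contribution in the critical-point identity $\nabla\log\Upsilon=A\phi^{-2}\nabla\phi+\epsilon\nabla\rho$ (which you dropped); both only produce $O(\epsilon)$ errors, exactly the $\epsilon C_2\tr_h h_0$ terms in the paper, and disappear when $\epsilon\to0$.
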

\begin{proof} We will modify the proof in \cite{TosattiWeinkove2015} to the noncompact case. Let $\Upsilon=\tr_{h_0}h$ and $\Theta=\tr_hh_0$. We first estimate $\heat \log\Upsilon$. In the following small case $c_i$ will denote a positive constant depending only on $n$. By Lemma \ref{l-trace}(i) with $h_0=\hat h$, in the notations as in the lemma, we have
\bee
\mathrm{II}\le \frac12c_3 \Theta(K+K_1).
\eee
\bee
\mathrm{III}\le \frac12c_3\Upsilon^{-1}K_1\Theta.
\eee
Hence
\be\label{e-trace-est-1}
\heat \log\Upsilon\le 2\Upsilon^{-2}\text{\bf Re}\lf( h^{k\bar q}( T_0)_{ki}^i\hat \nabla_{\bar q}\Upsilon\ri)+c_3(K+K_1+\Upsilon^{-1})\Theta
\ee
Fix $S_2<S$ and let $\Lambda=(S_2-t)\dot\psi+\psi+nt$ and let $u$ be as in {\bf(a2)}, then by Lemma \ref{l-psi},
\bee
\begin{split}
\heat (\Lambda-\frac{S_2}Su)=&-S_2\tr_{h}(\Ric(\omega_0))+\frac{S_2}S\Delta u+\Theta\\
\ge& -\frac {S_2}S\tr_{h}(\omega_0+\ii\ddbar u)+\frac{S_2}S\Delta u+\Theta\\
=&(1-\frac{S_2}S)\Theta.
\end{split}
\eee
Let $\mathfrak{m}=\sup_M|\Lambda-\frac{S_2}Su|$, and let $\phi=\Lambda-\frac{S_2}Su+\mathfrak{m}+1\ge 1$. By the proof of Lemma \ref{l-max}, there is a constant $C_1>0$ such that if $P=e^{2C_1t}\rho$, then
$$
\heat P\ge 0.
$$
Finally, let
$$
Q=\log\Upsilon+A\phi^{-1}-\e P
$$
where $A\ge 1$ is a constant to be determined later. Then
\be\label{e-trace-est-3}
\begin{split}
\heat Q\le &2\Upsilon^{-2}\text{\bf Re}\lf( h^{k\bar q}( T_0)_{ki}^i\hat \nabla_{\bar q}\Upsilon\ri)+c_3(K+K_1+\Upsilon^{-1})\Theta\\
&-A\phi^{-2}\heat \phi-2A\phi^{-3}h^\ijb \hat\nabla_i\phi\hat\nabla_{\bar j}\phi \\
\le &2\Upsilon^{-2}\text{\bf Re}\lf( h^{k\bar q}( T_0)_{ki}^i\hat \nabla_{\bar q}\Upsilon\ri)+c_3(K+K_1+\Upsilon^{-1})\Theta\\
&-A\a\phi^{-2}\Theta-2A\phi^{-3}h^\ijb \hat\nabla_i\phi\hat\nabla_{\bar j}\phi\\
\end{split}
\ee
where $\a=1-\frac{S_2}S>0$. Since $Q<0$ outside $\Omega\times[0,S_1]$ for some compact set and $Q$ is bounded from above, there is $(x_0,t_0)$ with $x_0\in \Omega$ such that $Q(x_0,t_0)=\sup_{M\times[0,S_1]}Q$. Suppose $t_0=0$, then
\be\label{e-trace-est-2}
\sup_{M\times[0,S_1]}Q\le A
\ee
because $\phi\ge 1$. Suppose $t_0>0$, then at $(x_0,t_0)$
$$
\Upsilon \hat\nabla_{\bar q}\Upsilon=A\phi^{-2}\hat\nabla_{\bar q}\phi+\e\hat\nabla_{\bar q}P.
$$
Hence in a unitary basis with respect to $h_0$ so that $(h_0)_\ijb$, $h_\ijb=\lambda_i\delta_{ij}$,
\bee
\begin{split}
2\Upsilon^{-2}\text{\bf Re}\lf( h^{k\bar q}( T_0)_{ki}^i\hat \nabla_{\bar q}\Upsilon\ri)=&\Upsilon^{-1}\text{\bf Re}\lf( h^{k\bar q}( T_0)_{ki}^i(A\phi^{-2}\hat\nabla_{\bar q}\phi+\e\hat\nabla_{\bar q}P)\ri)\\
\le
&c_4K_1\Upsilon^{-1}\lf(A\sum_q\lambda_q^{-1}(|\hat\nabla_{\bar q}\phi|+\e |\hat\nabla_{\bar q}P|) \ri)\\
\le&2A\phi^{-3}\sum_q\lambda_q^{-1}|\hat\nabla_q\phi|^2 +c_5K_1^2A\Upsilon^{-2}\phi^3\sum_q\lambda_q^{-1}+\e C_2\Theta \\
=&2A\phi^{-3}h^\ijb\hat\nabla_i\phi\hat\nabla_{\bar j}\phi +c_5K_1^2A\Upsilon^{-2}\phi^3\Theta+\e C_2\Theta
\end{split}
\eee
  for some constant $C_2$ independent of $\e$ and some constants $c_4, c_5>0$ depending only on $n$. By \eqref{e-trace-est-3}, we have at $(x_0,t_0)$
  \bee
  \begin{split}
  0\le&\heat Q\\
  \le &c_3(K+K_1+\Upsilon^{-1})\Theta-A\a\phi^{-2}\Theta+c_5K_1^2A\Upsilon^{-2}\phi^3\Theta+\e C_2\Theta.
  \end{split}
  \eee
  Hence
  \bee
 0\le c_5K_1^2A\Upsilon^{-2}\phi^3+c_3\Upsilon^{-1}+(c_3(K+K_1)+\e C_2-A\a \phi^{-2})
  \eee
  Let $A=\a^{-1}(2\mathfrak{m}+1)^2(c_3(K+K_1)+\e C_2+1)$, then
   \bee
 0\le c_5K_1^2A\Upsilon^{-2}\phi^3+c_3\Upsilon^{-1}-1.
  \eee
  Since $\Upsilon^{-1}>0$, we have
  \bee
  \Upsilon^{-1}\ge \displaystyle\frac{-c_3+(c_3^2+4c_5K_1^2A )^\frac12}{2c_5K_1^2A}.
  \eee
  because $\phi\ge1$. Hence

  \bee
  \Upsilon\le\frac12\lf(c_3+(c_3^2+4c_5K_1^2A )^\frac12\ri).
  \eee
 Therefore on $M\times[0,S_1]$

  \bee
  Q\le \log \lf(\frac12\lf(c_3+(c_3^2+4c_5K_1^2A )^\frac12\ri)\ri)+A
  \eee
  As $\epsilon$ is arbitrary, the result follows.
\end{proof}

\section{existence criteria for Chern-Ricci flow}\label{s-existence}

$(M^n,g_0)$ be a complete noncompact Hermitian manifold. Let $S_{A}$ be the supremum of $S>0$ so that the Chern-Ricci flow \eqref{e-CRflow-1} has a solution $g(t)$ with initial data $g_0$ such that $g(t)$ is uniformly equivalent to $g_0$ in $M\times[0,S]$.

Let $S_{B}$ be the supremum of $S>0$ such that there is a smooth bounded function $u$ satisfying {\bf(a2)}, that is:
\be\label{e-a2}
\theta_0-S\Ric(\theta_0)+\ii\ddbar u\ge \beta\theta_0
\ee
for some $\beta>0$, where $\theta_0$ is the \K form of $g_0$.

We want to prove the following:
\begin{thm}\label{t-existence-1}
Let $(M^n,g_0)$ be a complete noncompact Hermitian manifold with torsion $T$. Assume the following such that
\begin{enumerate}
  \item [(i)]  $|T|_{g_0}$ and $ |\bar\p T|_{g_0}$ are uniformly bounded;
  \item [(ii)] The bisectional curvature of $g_0$ is uniformly bounded from below.
  \item [(iii)] There exists a smooth function $\rho$ satisfying {\bf(a1)}.
\end{enumerate}
 Then $S_{A}=S_{B}$.
\end{thm}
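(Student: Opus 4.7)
The plan is to establish both inequalities $S_A\le S_B$ and $S_B\le S_A$ separately, with the latter being the main content.

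\textbf{The direction $S_A\le S_B$ (easy).} For any $S<S_A$, take a solution $g(t)$ on $M\times[0,S]$ uniformly equivalent to $g_0$ and let $\psi$ be the corresponding solution of \eqref{e-MA-1}. I would verify that the hypotheses of Lemma \ref{l-psi-est} are satisfied (using (i)--(iii) and Lemma \ref{l-max}), which bounds $\dot\psi$ and hence $\psi$ on $[0,S]$. Setting $u=\psi(\cdot,S)$ then gives
\bee
\theta_0-S\,\Ric(\theta_0)+\ii\ddbar u=\theta(S)\ge \beta\theta_0
\eee
for some $\beta>0$ by the uniform equivalence $g(S)\simeq g_0$. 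Thus $S\le S_B$.

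\textbf{The direction $S_B\le S_A$ (main step).} Fix $S<S_B$ together with a bounded $u$ and $\beta>0$ satisfying \eqref{e-a2}. The strategy is to produce a sequence of auxiliary Chern-Ricci flows with bounded-geometry initial data whose a priori estimates are uniform up to time $S$, and then pass to the limit. Concretely, I would:

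\textbf{(1)} Construct a family of Hermitian metrics $g_{0,k}$ with bounded geometry of infinite order (in the sense of Definition \ref{boundedgeom}) such that $g_{0,k}\to g_0$ smoothly on compact subsets, $g_{0,k}$ is uniformly equivalent to $g_0$ on $M$ with constants independent of $k$, and the torsion bound (i) and the lower bisectional bound (ii) are preserved uniformly in $k$ (possibly after slightly enlarging $K$ and $K_1$). Using Lemma \ref{l-equivalent}(ii), this uniform equivalence lets me transfer \eqref{e-a2} to $g_{0,k}$: there exist $S_k\to S$, $\beta_k\ge \beta'>0$, and uniformly bounded $u_k$ with $\theta_{0,k}-S_k\,\Ric(\theta_{0,k})+\ii\ddbar u_k\ge \beta_k\theta_{0,k}$.

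\textbf{(2)} Apply Lemma \ref{l-shortime-1} to $g_{0,k}$ to produce a short-time Chern-Ricci flow $g_k(t)$, initially uniformly equivalent to $g_{0,k}$.

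\textbf{(3)} Run the a priori estimates of Section \ref{s-estimates}. Lemma \ref{l-psi-est} gives uniform two-sided bounds on $\psi_k$ and $\dot\psi_k$ on any subinterval $[0,S_1]$ with $S_1<S_k$, which in particular controls $\log(\theta_k^n/\theta_{0,k}^n)$ uniformly. Lemma \ref{l-trace-est-1} then supplies a uniform upper bound on $\tr_{g_{0,k}}g_k$, and combined with the determinant bound this yields a uniform two-sided comparison $C^{-1}g_{0,k}\le g_k(t)\le C g_{0,k}$ on the subinterval, with $C$ depending only on $n$, $K$, $K_1$, $\beta$, $\|u\|_\infty$, and $S_1/S$.

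\textbf{(4)} A standard continuity/open-closed argument, restarting Lemma \ref{l-shortime-1} from $g_k(t)$ at each interior time (using that $g_k(t)$ inherits bounded geometry of infinite order from $g_{0,k}$), upgrades the a priori estimate of step (3) to short-time existence on $[0,S_1]$ for every $S_1<S_k$, hence on all of $[0,S_k)$.

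\textbf{(5)} Standard Shi-type local higher-order derivative estimates for the Chern-Ricci flow (proved in the same manner as for \KR flow, using bounded geometry of $g_{0,k}$) give uniform $C^\infty_{\mathrm{loc}}$ control on $g_k(t)$. A diagonal subsequence argument then extracts a limit $g(t)$ defined on $M\times[0,S_1]$ for each $S_1<S$; these limits patch to a Chern-Ricci flow on $M\times[0,S)$ starting at $g_0$ that remains uniformly equivalent to $g_0$ by passing the estimates to the limit. This gives $S\le S_A$.

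\textbf{Main obstacle.} The hardest step is (1): because only a lower bound on the bisectional curvature of $g_0$ is assumed, the curvature tensor of $g_0$ may be unbounded, so Lemma \ref{l-shortime-1} cannot be applied directly to $g_0$ itself. One therefore has to produce bounded-geometry approximants $g_{0,k}$ that simultaneously preserve (i), (ii), and condition \eqref{e-a2}, with all the associated constants controlled uniformly in $k$. The fact that \eqref{e-a2} transfers cleanly under uniform bi-Lipschitz changes of the metric is exactly the content of Lemma \ref{l-equivalent}, which is the reason that lemma was proved in advance; the remaining work is a mollification/convolution construction in the bounded geometry charts (or a short auxiliary parabolic smoothing) that leaves the torsion, curvature lower bound, and the distance-like function $\rho$ essentially unchanged up to controlled perturbations.
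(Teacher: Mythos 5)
Your outline of the easy direction and of steps (2)--(5) matches the paper's machinery (Lemma \ref{l-shortime-1}, the estimates of Lemmas \ref{l-psi-est} and \ref{l-trace-est-1}, the Sherman--Weinkove local estimates, and an open--closed continuation plus a compact-exhaustion limit). But there is a genuine gap exactly where you locate the ``main obstacle'': step (1) is not a technicality you can defer to ``mollification in bounded geometry charts or a short auxiliary parabolic smoothing''. Under the hypotheses of the theorem $g_0$ has no curvature upper bound at all, so there are no uniform holomorphic charts in which to mollify, and no known convolution-type construction produces, on a complete noncompact manifold, approximants $g_{0,k}$ with bounded geometry of infinite order that are globally uniformly equivalent to $g_0$ (with constants tending to $1$, which you in fact need so that Lemma \ref{l-equivalent}(ii) gives $S_k\to S$ rather than $\a S$) while preserving the torsion bounds and the lower bisectional bound up to $\e$. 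The ``parabolic smoothing'' alternative is circular: running a Chern--Ricci (or Ricci) flow for a short time from a metric of unbounded curvature is precisely the existence statement being proved. So as written, the key step of your proof is an unproved assertion that is at least as hard as the theorem itself.

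The paper avoids global approximation altogether by Hochard's conformal-exhaustion trick: for $\rho_0\gg1$ one sets $h_0=e^{2F}g_0$ on the sublevel set $U_{\rho_0}$ of $\rho$, with $F=\mathfrak{F}(\rho/\rho_0)$ as in Lemma \ref{l-exhaustion-1}, so that $h_0=g_0$ on the inner region, $h_0$ is complete on $U_{\rho_0}$, the perturbations of $|T|$, $|\bar\p T|$, $\mathrm{BK}$ and of condition {\bf(a2)} are of size $O(\rho_0^{-1})$ (Lemma \ref{l-conformal-1}), and --- the nontrivial verification you would still owe --- $(U_{\rho_0},h_0)$ has bounded geometry of infinite order (Lemma \ref{l-boundedgeom}), because on a relatively compact set $g_0$ trivially has bounded geometry and near $\p U_{\rho_0}$ the rescaled charts controlled by $\mathfrak{F}$ do the work. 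One then runs your steps (2)--(4) on each $U_{\rho_0}$, with all constants independent of $\rho_0$ and $\e$, and lets $\rho_0\to\infty$, $\e\to0$; since $h_0\equiv g_0$ on any fixed compact set for $\rho_0$ large, the limit flow starts exactly at $g_0$, with no smoothing of the initial metric required. (A small additional remark: in your easy direction, invoking Lemma \ref{l-psi-est} is both unnecessary and slightly circular, since that lemma assumes {\bf(a2)}; boundedness of $\dot\psi=\log(\theta^n(t)/\theta_0^n)$, hence of $u=\psi(\cdot,S)$, follows directly from the uniform equivalence of $g(t)$ and $g_0$.)
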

The proof of $S_A\le S_B$ is easy, see \cite{LottZhang2011,TosattiWeinkove2015} for example.
It remains to prove that $S_B\le S_A$. Hence let $S>0$ be such that \eqref{e-a2} holds for some bounded function $u$ and for some $\beta>0$. Since it is in general not true that $g_0$ has bounded geometry of all order, we proceed as in \cite{LeeTam2017}.

Let $\kappa\in (0,1)$, $f:[0,1)\to[0,\infty)$ be the function:
\be\label{e-exh-1}
 f(s)=\left\{
  \begin{array}{ll}
    0, & \hbox{$s\in[0,1-\kappa]$;} \\
    -\displaystyle{\log \lf[1-\lf(\frac{ s-1+\kappa}{\kappa}\ri)^2\ri]}, & \hbox{$s\in (1-\kappa,1)$.}
  \end{array}
\right.
\ee
Let   $\varphi\ge0$ be a smooth function on $\R$ such that $\varphi(s)=0$ if $s\le 1-\kappa+\kappa^2 $, $\varphi(s)=1$ for $s\ge 1-\kappa+2 \kappa^2 $
\be\label{e-exh-2}
 \varphi(s)=\left\{
  \begin{array}{ll}
    0, & \hbox{$s\in[0,1-\kappa+\kappa^2]$;} \\
    1, & \hbox{$s\in (1-\kappa+2\kappa^2,1)$.}
  \end{array}
\right.
\ee
such that $\displaystyle{\frac2{ \kappa^2}}\ge\varphi'\ge0$. Define
 $$\mathfrak{F}(s):=\int_0^s\varphi(\tau)f'(\tau)d\tau.$$
From \cite{LeeTam2017}, we have:

\begin{lma}\label{l-exhaustion-1} Suppose   $0<\kappa<\frac18$. Then the function $\mathfrak{F}\ge0$ defined above is smooth and satisfies the following:
\begin{enumerate}
  \item [(i)] $\mathfrak{F}(s)=0$ for $0\le s\le 1-\kappa+\kappa^2$.
  \item [(ii)] $\mathfrak{F}'\ge0$ and for any $k\ge 1$, $\exp( -k\mathfrak{F})\mathfrak{F}^{(k)}$ is uniformly  bounded.
  \item [(iii)]  For any $ 1-2\kappa <s<1$, there is $\tau>0$ with $0<s -\tau<s +\tau<1$ such that
 \bee
 1\le \exp(\mathfrak{F}(s+\tau)-\mathfrak{F}(s-\tau))\le (1+c_2\kappa);\ \ \tau\exp(\mathfrak{F}(s_0-\tau))\ge c_3\kappa^2
 \eee
  for some absolute constants  $c_2>0, c_3>0$.
\end{enumerate}

\end{lma}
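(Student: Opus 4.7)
Claim (i) is immediate, as $\varphi\equiv 0$ on $[0,1-\kappa+\kappa^2]$ kills the integrand defining $\mathfrak{F}$.

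For (ii), I would split $[0,1)$ into three subintervals. On $[0,1-\kappa+\kappa^2]$, $\mathfrak{F}\equiv 0$. On the compact transitional interval $[1-\kappa+\kappa^2,1-\kappa+2\kappa^2]$, $\mathfrak{F}$ is smooth and all of its derivatives are automatically bounded. On the active region $[1-\kappa+2\kappa^2,1)$, $\varphi\equiv 1$, so $\mathfrak{F}(s)=C_0+f(s)$ for the constant $C_0=\mathfrak{F}(1-\kappa+2\kappa^2)$. Writing $w(s)=1-((s-1+\kappa)/\kappa)^2$ (a polynomial whose derivatives are uniformly bounded on $[1-\kappa,1]$) so that $f=-\log w$, a straightforward induction on $k$ shows that $w^k f^{(k)}$ is a polynomial in $w,w',\ldots,w^{(k)}$, hence uniformly bounded. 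Since $\exp(-k\mathfrak{F})=e^{-kC_0}w^k$, the product $\exp(-k\mathfrak{F})\mathfrak{F}^{(k)}$ is uniformly bounded, giving (ii).

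The core of the argument is (iii). For $s\in(1-2\kappa,1)$, set $u_0=(s-1+\kappa)/\kappa\in(-1,1)$ and $v_0=1-u_0^2$. I would choose $\tau$ in two regimes. \emph{Regime A}, for $s$ close to $1-\kappa$ (say $u_0\le 2\kappa$): take $\tau=\kappa^2/2$, so that $s\pm\tau$ lie in a neighborhood of $1-\kappa$ where $\mathfrak{F}$ is either zero or $O(\kappa^2)$; the crude bound $|\mathfrak{F}(s+\tau)-\mathfrak{F}(s-\tau)|\le \sup_{[s-\tau,s+\tau]}f'\cdot 2\tau=O(\kappa^2)$ gives $\exp(\mathfrak{F}(s+\tau)-\mathfrak{F}(s-\tau))\le 1+c_2\kappa$, and $\tau\exp(\mathfrak{F}(s-\tau))\ge c_3\kappa^2$. \emph{Regime B}, for $u_0>2\kappa$: take $\tau=c\kappa^2 v_0/u_0$ for a small absolute constant $c$, so $\delta:=\tau/\kappa=c\kappa v_0/u_0$. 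The constraint $s+\tau<1$ reduces to $c\kappa<u_0/(1+u_0)$ and holds for $\kappa<1/8$. In this range $\mathfrak{F}(s\pm\tau)=C_0+f(s\pm\tau)$, and
$$\exp(\mathfrak{F}(s+\tau)-\mathfrak{F}(s-\tau))=\frac{v_0+2u_0\delta-\delta^2}{v_0-2u_0\delta-\delta^2}=1+\frac{4u_0\delta}{v_0-2u_0\delta-\delta^2}.$$
The choice of $\delta$ forces the denominator above $v_0/2$ and the ratio to be $\le 1+8c\kappa$. Similarly $\exp(\mathfrak{F}(s-\tau))\ge e^{C_0}/(2v_0)$, so $\tau\exp(\mathfrak{F}(s-\tau))\ge ce^{C_0}\kappa^2/(2u_0)\ge c_3\kappa^2$.

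The main obstacle I anticipate is bookkeeping at the boundary of the two regimes --- in particular, the partial-transition band $1-\kappa+\kappa^2\le s<1-\kappa+2\kappa^2$ where $\mathfrak{F}\ne C_0+f$ but is already nonzero. This should be absorbed into Regime A using the uniform bound $0\le\mathfrak{F}\le 4\kappa^2$ on this band (obtained by integrating $f'$ against $0\le\varphi\le 1$ and using $-\log(1-4\kappa^2)\le 8\kappa^2$), yielding a single set of absolute constants $c_2,c_3$.
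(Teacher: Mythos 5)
Parts (i) and (ii) of your argument are fine (note only that on the region where $\varphi\equiv1$ the correct constant is $C_0=\mathfrak{F}(1-\kappa+2\kappa^2)-f(1-\kappa+2\kappa^2)$, a nonpositive quantity of size $O(\kappa^2)$, which changes nothing), and Regime A is fine; also note the paper itself does not prove this lemma but cites Lee--Tam, so a self-contained proof is welcome. The genuine gap is in Regime B, at its lower edge. Your computation there uses the identity $\mathfrak{F}(s\pm\tau)=C_0+f(s\pm\tau)$, which requires $s-\tau\ge 1-\kappa+2\kappa^2$, i.e. $\tau\le\kappa(u_0-2\kappa)$; your choice $\tau=c\kappa^2v_0/u_0$ violates this as soon as $u_0$ is only slightly larger than $2\kappa$. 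Worse, the conclusion itself fails for that choice of $\tau$: take $u_0=3\kappa$ (so $s=1-\kappa+3\kappa^2$, which is in your Regime B) and $\kappa$ small. Then $\delta=\tau/\kappa\approx c/3$, so $s-\tau<1-\kappa$ once $\kappa<c/9$, whence $\mathfrak{F}(s-\tau)=0$, while $\mathfrak{F}(s+\tau)\ge f(s+\tau)-f(1-\kappa+2\kappa^2)\approx-\log\bigl(1-c^2/9\bigr)$, a positive constant independent of $\kappa$. Thus $\exp\bigl(\mathfrak{F}(s+\tau)-\mathfrak{F}(s-\tau)\bigr)\ge 1+c^2/10$, which is not $\le 1+c_2\kappa$ for small $\kappa$. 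The ratio $w(s-\tau)/w(s+\tau)$ is indeed $1+O(c\kappa)$, but it no longer equals (nor dominates) $\exp(\mathfrak{F}(s+\tau)-\mathfrak{F}(s-\tau))$ when $s-\tau$ drops below the transition band, because $\mathfrak{F}(s-\tau)$ is then $0$ rather than $C_0-\log w(s-\tau)$. Your closing remark about absorbing the band into Regime A does not address this, since the problematic $s$ lie strictly above the band, in Regime B; the trouble is where $s-\tau$ lands, not where $s$ is.

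The fix is to remove the $1/u_0$ factor that inflates $\tau$ near the regime boundary: take, say, $\tau=c\kappa^2 v_0$ throughout (or $\tau=\min\{c\kappa^2v_0/u_0,\ \kappa^2/2\}$). If $s-\tau\ge 1-\kappa$, the monotone comparison $\mathfrak{F}(s+\tau)-\mathfrak{F}(s-\tau)\le\int_{s-\tau}^{s+\tau}f'=\log\bigl(w(s-\tau)/w(s+\tau)\bigr)$ (valid since $0\le\varphi\le1$, $f'\ge0$, so no identity for $\mathfrak{F}$ is needed) gives, with $u_0\delta\le c\kappa v_0$ and $2u_0\delta+\delta^2\le v_0/2$, the bound $1+8c\kappa$; if $s-\tau<1-\kappa$ then $u_0<\delta\le c\kappa$ and the increment is at most $-\log\bigl(1-(u_0+\delta)^2\bigr)\le 8c^2\kappa^2$. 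For the second inequality, when $s-\tau\ge1-\kappa+2\kappa^2$ one has $e^{\mathfrak{F}(s-\tau)}\ge e^{-8\kappa^2}/w(s-\tau)\ge e^{-1/8}/(2v_0)$, so $\tau e^{\mathfrak{F}(s-\tau)}\ge\tfrac12 e^{-1/8}c\kappa^2$; otherwise $u_0\le(2+c)\kappa$ forces $v_0\ge1/2$, hence $\tau\ge c\kappa^2/2$ and the bound holds trivially since $\mathfrak{F}\ge0$. With this modification your argument goes through and yields absolute constants $c_2,c_3$.
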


For   any  $\rho_0>0$, let  $U_{\rho_0}$ be the component of $ \{x|\ \rho(x)<\rho_0\}$ which contains the fixed point mentioned in {\bf(a1)}. Hence $U_{\rho_0}$ will exhaust $M$ as $\rho_0\to\infty$.

For $\rho_0>>1$, let $F(x)=\mathfrak{F}(\rho(x)/\rho_0)$. Let $h_0=e^{2F}g_0$. Then $(U_{\rho_0},h_0)$ is a complete Hermitian metric, see \cite{Hochard2016}, and $h_0=g_0$ if on $\{\rho(x)<(1-\kappa+\kappa^2)\rho_0\}$.
\begin{lma}\label{l-conformal-1}
\begin{enumerate} Suppose the torsion $T$ of $g_0$ satisfies $|T|^2_{g_0}\le K, |\bar\p T|_{g_0}\le K$ and $\mathrm{BK}(g_0)\ge -K$. Then
  \item [(i)] For any $\e>0$, the torsion   $T_0$ of $h_0$ satisfies $|T_0|_{h_0}^2\le K+\e$, $|\bar\p T_0|_{h_0}\le K+\e$,    and $\mathrm{BK}(h_0)\ge -K-\e$, provided $\rho_0$ is large enough.

  \item [(ii)] For any $\e>0$, there is $\rho_1>0$ such that if $\rho_0\ge \rho_1$, then
  $$
  \omega_0-(S-\e)\Ric(\omega_0)+\frac{S-\e}S\ii\ddbar u\ge \frac\beta2\omega_0
  $$
  where $\omega_0$ is the \K form of $h_0$.
\end{enumerate}

\end{lma}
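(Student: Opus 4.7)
The strategy rests on decay estimates for $F=\mathfrak{F}(\rho/\rho_0)$. By Lemma~\ref{l-exhaustion-1}(ii), $\mathfrak{F}^{(k)}\le C_k e^{k\mathfrak{F}}$, so together with $|\p\rho|_{g_0},\,|\ii\ddbar\rho|_{g_0}\le C$ from~{\bf(a1)} a direct differentiation yields $|\p F|_{g_0}\le C\rho_0^{-1}e^F$ and $|\ii\ddbar F|_{g_0}\le C\rho_0^{-1}e^{2F}$. Since $h_0=e^{2F}g_0$, the conformal scalings $|\p F|_{h_0}=e^{-F}|\p F|_{g_0}$ and $|\ii\ddbar F|_{h_0}=e^{-2F}|\ii\ddbar F|_{g_0}$ give
\begin{equation*}
|\p F|_{h_0}+|\ii\ddbar F|_{h_0}\le C\rho_0^{-1},
\end{equation*}
so even though $F$ itself blows up near $\p U_{\rho_0}$, its first and second derivatives measured in $h_0$ become uniformly small as $\rho_0\to\infty$.

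For~(i), the Chern connection under the conformal change $h_0=e^{2F}g_0$ satisfies $(T_0)^k_{ij}=T^k_{ij}+2(\delta^k_jF_i-\delta^k_iF_j)$ and $R^{h_0}_{i\bar jk\bar l}=e^{2F}R^{g_0}_{i\bar jk\bar l}-2F_{i\bar j}(h_0)_{k\bar l}$ (see the Appendix). The torsion identity gives $|T_0|_{h_0}\le e^{-F}|T|_{g_0}+C(n)|\p F|_{h_0}\le \sqrt K+C\rho_0^{-1}$, hence $|T_0|^2_{h_0}\le K+C'\rho_0^{-1}$. Since the Chern connection carries no $\bar\p$-Christoffels on upper holomorphic indices, differentiating in $\p_{\bar l}$ yields $|\bar\p T_0|_{h_0}\le e^{-2F}|\bar\p T|_{g_0}+C|\ii\ddbar F|_{h_0}\le K+C\rho_0^{-1}$. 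For the bisectional curvature, pick an $h_0$-unitary frame $\{e_i\}$; then $\{e^Fe_i\}$ is $g_0$-unitary, so $R^{g_0}(e_i,\bar e_i,e_j,\bar e_j)=e^{-4F}R^{g_0}(e^Fe_i,\overline{e^Fe_i},e^Fe_j,\overline{e^Fe_j})\ge -Ke^{-4F}$, whence
\begin{equation*}
R^{h_0}_{i\bar ij\bar j}\ge -Ke^{-2F}-2|F_{i\bar i}|\ge -K-C\rho_0^{-1}.
\end{equation*}
Taking $\rho_0$ sufficiently large makes each of the three error terms smaller than $\e$.

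For~(ii), since $\log\det h_0=\log\det g_0+2nF$, we have $\Ric(\omega_0)=\Ric(\theta_0)-2n\ii\ddbar F$, so
\begin{equation*}
\omega_0-(S-\e)\Ric(\omega_0)+\tfrac{S-\e}{S}\ii\ddbar u=e^{2F}\theta_0+\tfrac{S-\e}{S}[\ii\ddbar u-S\Ric(\theta_0)]+2n(S-\e)\ii\ddbar F.
\end{equation*}
Replacing $\beta$ by $\min(\beta,1)$ (since~{\bf(a2)} only weakens), we may assume $\beta\le 1$. Then~{\bf(a2)} gives $\ii\ddbar u-S\Ric(\theta_0)\ge(\beta-1)\theta_0$, while the $h_0$-bound on $\ii\ddbar F$ yields $2n(S-\e)\ii\ddbar F\ge -C'\rho_0^{-1}\omega_0$. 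Using $\tfrac{S-\e}{S}(\beta-1)\ge\beta-1$ (valid since $\beta-1\le 0$) together with $e^{2F}\ge 1$, a short computation shows the right-hand side minus $\tfrac\beta2\omega_0$ is bounded below by $[\tfrac\beta2-C'\rho_0^{-1}]\theta_0$, which is nonnegative once $\rho_0\ge 2C'/\beta$. The only delicate point throughout is the bookkeeping of conformal factors: each derivative of $F$ carries an $e^F$ which is precisely cancelled when the tensor norm is taken in $h_0$, producing the uniform $O(\rho_0^{-1})$ decay that drives every estimate.
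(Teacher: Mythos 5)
Your proof is correct and follows essentially the same route as the paper: derive the uniform $O(\rho_0^{-1})$ bounds on $|\p F|_{h_0}$ and $|\ddbar F|_{h_0}$ from Lemma \ref{l-exhaustion-1} and {\bf(a1)}, then obtain (i) from the conformal-change formulas of Lemma \ref{l-conformal-a} together with $F\ge0$, and (ii) by inserting {\bf(a2)} and $\omega_0\ge\theta_0$ into the same algebraic identity. You merely write out the torsion, $\bar\p$-torsion and bisectional curvature estimates that the paper leaves implicit, and your bookkeeping in (ii) is if anything slightly cleaner since the error term is uniform in $\e$.
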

\begin{proof}
By Lemma \ref{l-exhaustion-1} and   assumption (ii) in the Theorem.
\be\label{e-conformal-2}
\begin{split}
|\p F|_{h_0}=&e^{-F}|\p F|_{g_0}\\
=&e^{-F}\rho_0^{-1}\mathfrak{F}'|\p\rho|\\
\le &C_1\rho_0^{-1}
\end{split}
\ee
for some constant $C_1$ independent of $\rho_0$ by assumption (ii) in Theorem.

Similarly,
\be\label{e-conformal-3}
|\ddbar F|_{h_0}\le C_1\rho_0^{-1}
\ee
for a possible larger $C_1$. (i) and (ii) follow from Lemma \ref{l-conformal-a} and the fact that $F\ge0$.

 To prove (ii), we may assume that $\beta\le 1$, for any $\e>0$, then if $\rho_0$ large enough,
\bee
\begin{split}
\omega_0-(S-\e)\Ric(\omega_0)=&\omega_0-(S-\e)\Ric(\theta_0)+2n(S-\e)\ii\ddbar F\\
\ge &\omega+\frac{S-\e}{S}(\beta-1)\theta_0-\frac{S-\e}{S}\ii\ddbar u-\e e^{2F}\theta_0 \\
\ge&\left(1+\frac{S-\e}{S}(\beta-1)-\e\right)\omega_0-\frac{S-\e}{S}\ii\ddbar u
\end{split}
\eee
because $\omega_0\ge\theta_0$.

\end{proof}

We need the following technical lemma which will be proved later.
\begin{lma}\label{l-boundedgeom}
$(U_{\rho_0},h_0)$ has bounded geometry of infinite order.
\end{lma}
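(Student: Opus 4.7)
The strategy is to leverage the compactness of $\bar U_{\rho_0}$ in $M$---which follows from $\rho$ being equivalent to distance---together with the smoothness of $g_0$, to first obtain bounded geometry of infinite order for $g_0$ on $\bar U_{\rho_0}$, and then deduce bounded geometry for $h_0$ on $U_{\rho_0}$ by a conformal rescaling that absorbs the blow-up of $e^{2F}$ near the boundary. Since $g_0$ is smooth on the compact set $\bar U_{\rho_0}$, a standard argument gives $r_0>0$ and, for each $p\in\bar U_{\rho_0}$, a biholomorphism $\xi_p:D(r_0)\to V_p\subset M$ with $\xi_p(0)=p$ such that $\xi_p^\ast g_0$ is uniformly equivalent to $g_e$ and all its $C^k$-norms are bounded independently of $p$. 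By the same compactness, all covariant derivatives of $\rho$ are bounded on $\bar U_{\rho_0}$, beyond the first and second which were built into (a1).

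For each $p\in U_{\rho_0}$, set $\mu=F(p)\ge 0$ and define the rescaled chart $\zeta_p:D(r_0)\to U_p$ by $\zeta_p(w)=\xi_p(e^{-\mu}w)$. First I would verify that $\zeta_p(D(r_0))\subset U_{\rho_0}$: the image lies in a $g_0$-ball of radius $O(r_0 e^{-\mu})$ around $p$, while an explicit inspection of $\mathfrak{F}$ near $s=1$ gives $e^{-\mu(p)}\lesssim (\rho_0-\rho(p))/\rho_0$, and the $g_0$-distance from $p$ to $\partial U_{\rho_0}$ is at least $c(\rho_0-\rho(p))$ by the bound on $|\p\rho|_{g_0}$. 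Hence for $\rho_0$ large compared to $r_0$ the image is contained in $U_{\rho_0}$.

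Next, writing
\[
 \zeta_p^\ast h_0(w)=e^{2(F\circ\zeta_p(w)-\mu)}\,(\xi_p^\ast g_0)(e^{-\mu}w),
\]
I would check that the second factor is uniformly equivalent to $g_e$ with all $C^k$-norms bounded (immediate from the choice of $\xi_p$), while the exponential prefactor is bounded between positive constants: by $|\p F|_{g_0}\lesssim e^F/\rho_0$, the oscillation of $F$ over the image is $O(1/\rho_0)$. Uniform control of the $C^k$-norms of $\zeta_p^\ast h_0$ then reduces, via the Leibniz rule, to bounding the Euclidean derivatives $\p_w^k(F\circ\zeta_p)$. The key observation is that each differentiation in $w$ contributes a factor $e^{-\mu}$ from the rescaling, so one needs to compare $e^{-k\mu}|\p_z^k F|$ (in the $\xi_p$-chart) with a $p$-independent constant. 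Using Lemma \ref{l-exhaustion-1}(ii) (which yields $\mathfrak{F}^{(k)}\le C_k e^{k\mathfrak{F}}$) together with the chain rule, the compactness bounds on derivatives of $\rho$, and the bounded geometry of $\xi_p^\ast g_0$, one gets $|\p_z^k F|\lesssim e^{kF}$; since $F\circ\zeta_p$ differs from $\mu$ by a constant on $D(r_0)$, the factors $e^{-k\mu}$ and $e^{kF\circ\zeta_p}$ cancel and deliver bounds uniform in $p$.

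The main obstacle is the bookkeeping at higher orders: one has to verify that the exponential growth of $\mathfrak{F}^{(k)}$ near the boundary of $U_{\rho_0}$ is precisely compensated by the $k$-th power of the rescaling factor $e^{-\mu}$, uniformly in $p\in U_{\rho_0}$. Once this cancellation is set up correctly, the verification reduces to an induction on $k$ using the chain and Leibniz rules applied to $F=\mathfrak{F}(\rho/\rho_0)$ and to $\zeta_p=\xi_p\circ\phi_{e^{-\mu}}$, with all input data (derivatives of $\xi_p^\ast g_0$, derivatives of $\rho$, and bounds on $\mathfrak{F}^{(k)}$) uniform in $p$.
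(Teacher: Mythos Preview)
Your proposal is correct and follows essentially the same route as the paper: rescale the local holomorphic charts for $g_0$ on the compact set $\bar U_{\rho_0}$ by the factor $e^{-F(p)}$, and use Lemma~\ref{l-exhaustion-1}(ii) to show that this rescaling exactly compensates the growth of $\mathfrak{F}^{(k)}$ in the higher derivatives. The only cosmetic differences are that the paper separates off the region where $F\equiv 0$ as an easy case and, near the boundary, quotes Lemma~\ref{l-exhaustion-1}(iii) (via \cite{LeeTam2017}) for the oscillation control of $F$ over the rescaled chart, whereas you handle all points uniformly and derive that oscillation bound directly from $|\partial F|_{g_0}\lesssim e^{F}/\rho_0$ by a Gronwall-type estimate.
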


\begin{proof}[Proof of Theorem \ref{t-existence-1}] Let $S>0$ be such that \eqref{e-a2} is true. Let $0<\e<S$. Let $\rho_0$ be large enough so that Lemma \ref{l-conformal-1} is true. Let $S_\e=S-\e$.

 \underline{\bf Claim.} The Chern Ricci flow with initial data $h_0$ has solution $h(t)$ on $U_{\rho_0}\times[0,S_\e)$ provided $\rho_0$ is large enough. Moreover, for any $\delta>0$ such that $0<S_\e-\delta<S_\e$   there is a constant independent of $\rho_0$ and $\e$ such that
$$
C^{-1}h(t)\le h_0\le Ch(t)
$$
on $U_{\rho_0}\times[0,S_\e-\delta)$.

If the claim is true,  let $\e\to 0$ and choose  suitable $\rho_0(\e)\to\infty$. By the estimates \cite{ShermanWeinkove2013} we conclude that the solutions corresponding to $\rho_0(\e)\to\infty$  will subconverge uniformly on compact sets to a solution of the Chern-Ricci flow $g(t)$ with initial data $g_0$ on $M\times[0,S)$. Moreover, for any $0<S'<S$, $g(t)$ is uniformly equivalent to $g_0$ on $M\times[0,S']$. From this we see that $S_1\le S_2$ and Theorem \ref{t-existence-1} is true.

To prove the claim, since $h_0$ has bounded geometry of infinite order, the Chern-Ricci flow \eqref{e-CRflow-1} has a solution $h(t)$ on $U_{\rho_0}\times[0,S_1]$ for some $S_\e>S_1>0$ such that $h(t)$ is uniformly equivalent to $h_0$. Let $\delta>0$ such that $S_\e-\delta>0$ and $S_1\le S_\e-\delta$. Let $K$  be the bound as in Lemma \ref{l-conformal-1}. By Lemma \ref{l-psi-est} and \ref{l-trace-est-1}, there is a constant  $C_1$ depending only on $\delta, n, K, \mathfrak{m}, S$ such that
$$
\lf|\log\lf(\frac{\omega^n(t)}{\omega_0^n}\ri)\ri|\le C_1; \ \tr_{h_0}h\le C_1
$$
on $U_{\rho_0}\times [0,S_1]$ where $\omega(t), \omega_0$ are the \K forms of $h(t), h_0$ respectively and $\mathfrak{m}=\sup_M|u|$. Hence there is a constant $C_2$ depending only on  $\delta, n, K, \mathfrak{m}, S$ such that
\be\label{e-existence-1}
C_2^{-1}h_0\le h(t)\le C_2h_0
\ee
on $U_{\rho_0}\times[0,S_1]$. Since $h_0$ has bounded geometry of infinite order, by the estimates in \cite{ShermanWeinkove2013}, we conclude that all derivatives of $h(t)$ with respect to $h_0$ are uniformly bounded on $U_{\rho_0}\times[0,S_1]$. Moreover, $h(S_1)$ also has bounded geometry of infinite order. By Lemma \ref{l-shortime-1}, we conclude that $h(t)$ can be extended beyond $S_1$ to some $S_2$ with $S_1<S_2<S_\e$ so that $h(t)$ is uniformly equivalent to $h_0$ on $U_{\rho_0}\times[0,S_2]$. Combining with \eqref{e-existence-1}, we conclude that the claim is true. This completes the proof of the theorem.

\end{proof}

It remains to prove Lemma \ref{l-boundedgeom}: $h_0$ has bounded geometry of infinite order.

\begin{proof}[Proof of Lemma \ref{l-boundedgeom}] $g_0$ has bounded geometry of infinite order on any compact set $\Omega$ of $M$: That is, there exists $r>0,\kappa_1,\kappa_2$ so that (i), (ii) in Definition \ref{boundedgeom} are true for all $k$ and for all points in $\Omega$. In fact, if $x\in \Omega$, there is an open set $O_x$ and there is a biholomorphism   $\xi_x: D(2)\to O_x$ with $\xi_x(0)=x$ such that $\xi_x^*(h_0)$ satisfying (i),(ii)  in Definition \ref{boundedgeom}. Since $\Omega$ is compact, we may find finitely many $x_1,\dots, x_m$ such that $\cup_i\xi_{x_i}(D(1))\supset \Omega$. For any $x$, there is $i$, such that $\xi_{x_i}(x)\in D(1)$. From it is easy to see that $h_0$ has bounded geometry of infinite order on $\Omega$.

 Let $\rho_1>\rho_0$. $g_0$ has bounded geometry of infinite order on $\ol{U_{\rho_1}}$ with $r=1$ and for some $\kappa_1,\kappa_2$ as in the Definition \ref{boundedgeom}.

 Let $B(x,r)$ and $\hat B(x,r)$ be the geodesic balls of radius $r$ centered at $x$ with respect to $g_0$ and $h_0=e^{2F}g_0$ respectively. Recall $F(x)=\mathfrak{F}(\rho(x)/\rho_0)$. We will argue as in \cite{Hochard2016,He2016}. See \cite{LeeTam2017}. Let   $x\in U_{\rho_0}$.

 \underline{Case 1}: $\rho(x)\le (1-2\kappa)\rho_0 $. Then
 $\rho(y)\le (1-2\kappa)\rho_0+C_1d(x,y)$ for some constant $C_1$ independent of $x, y$. Here $d(x,y)$ is the distance function with respect to $g_0$. Hence there is a $r_1>0$ independent of $x$ such that if $\rho(x)\le (1-2\kappa)\rho_0$, then $\rho(y)<(1-\kappa+\kappa^2)\rho_0$ for all $y\in B(x,r_1)$. In particular, $B(x,r_1)\subset U_{\rho_0}$ and $F=0$ on $B(x,r_1)$. Since $g_0=h_0$ on $B(x,r_1)$, one can see that there is a $0<\sigma_1$ independent of $x$ such that $\xi_{x}:D(\sigma_1)\to U_{\rho_0}$ which is a biholomorphism and such that the $\xi_x^*(h_0)=\xi_x^*(g_0)$. Hence $h_0$ has bounded geometry of infinite order on $\{x\in U_{\rho_0}|\ \rho(x)\le (1-2\kappa)\rho_0\}$.

 \underline{Case 2}: Suppose $\rho_0>\rho(x)>(1-2\kappa)\rho_0$. Let $s=\frac{\rho(x)}{\rho_0}$ which satisfies $1-2\kappa<s<1$. Let $\tau$ be the number in Lemma \ref{l-exhaustion-1}(iii) for this given $s$. Then as in the proof of Theorem 2.2 in \cite{LeeTam2017}, there is $r_2>0$ independent of $x$ such that
 \be\label{e-conformal-1}
 \hat B(x,r_2)\subset\{y\in U_{\rho_0}|\ s-\tau<\frac{\rho(y)}{\rho_0}<s+\tau\}.
\ee
By Lemma \ref{l-exhaustion-1}(iii), we conclude that if $\a=e^{\mathfrak{F}(s-\tau)}$, then there is a constant $C_2>0$ independent of $x$ such that
$$
\a^2g_0\le h_0\le C^2_2\a^2g_0
$$
on $\hat B(x,r_2)$. Hence
\bee
B(x,C_2^{-1}\a^{-1}r_2)\subset \hat B(x,r_2)\subset B(x,\a^{-1}r_2).
\eee
Since $\a\ge 1$, we may assume that $r_2>0$ which is independent of $x$ such that $\xi_x(D(\a^{-1}\sigma_2))\supset B(x,\a^{-1}r_2)$ for some $1>\sigma_2>0$ independent of $x$. Moreover, there is $C_3$ which is independent of $x$ such that $$\xi_x(D(C_3\a^{-1}\sigma_2))\subset B(x,C_2^{-1}\a^{-1}r_2).
$$
Hence
$\xi_x$ is a biholomprhism from $D(C_3\a^{-1}\sigma_2)$ to an open neighborhood $O_x$ of $x$ which is a subset of $\hat B(x,r_2)$ so that $\xi_x(0)=x$.

Define $\zeta_x: D(\sigma_2)\to O_x$ by $\zeta_x(w)=\xi_x(C_3\a^{-1}w).$
Then $$\zeta_x^*(h_0)=C_3^2\a^{-2}\xi_x^*(h_0)=C_3^2\a^{-2}e^{2F}\xi_x^*(g_0).$$
Let $z=C_3\a^{-1}w$, $w\in D(\sigma_2)$. Let $(h_0)_\ijb$ be the components of $\zeta_x^*(h_0)$ of in the $w$ coordinates and let $(g_0)_\ijb$ be the components of $g_0$ of $\xi_x^*(g_0)$ in the $z$ coordinates, then
$$
(h_0)_\ijb(w)=C_3^2e^{2F}\a^{-2}(g_0)_\ijb(z).
$$
By Lemma \ref{l-conformal-1} and \eqref{e-conformal-1},  there is a constant $C_4>0$ independent of $x$ such that $\a\le e^{F(y)}\le C_4\a$ for all $y\in \hat B(x,r_2)$. Hence we conclude that $\kappa_1'g_e\le \zeta^*(h_0)\le \kappa_2'$ for some positive constants $\kappa_1', \kappa_2'$ independent of $x$ and $g_e$ is the standard Euclidean metric in the $w$-space.

On the other hand,
\bee
\begin{split}
\frac{\p}{\p w^k}(h_0)_\ijb(w)=&2C_3^2\rho_0^{-1} \mathfrak{F}'e^{2F}\a^{-3}\frac{\p\rho}{\p z^l} (g_0)_\ijb(z)+C_3^3e^{2F}\a^{-3} \frac{\p }{\p z^k}(g_0)_\ijb(z).
\end{split}
\eee
By Lemma \ref{l-conformal-1} and the fact that $\a\le e^{F(y)}\le C_4\a$ for all $y\in \hat B(x,r_2)$, we conclude that
  $|\frac{\p}{\p w^k}(h_0)_\ijb(w)|$ is uniformly bounded on $D(\sigma_2)$ by a constant independent of $x$. Similarly, one can prove that for all $k\ge1$, all $k$ derivatives of $(h_0)_\ijb$ are uniformly bounded on $D(\sigma_2)$ by a constant independent of $x$.

  Combining case 1 and case 2, we conclude that the lemma is true.

\end{proof}

In Theorem \ref{t-existence-1}, we assume that $\mathrm{BK}(g_0)$ is uniformly bounded below. In case $\mathrm{BK}(g_0)$ is uniformly bounded from above, then we still have short time existence provided $g_0$ satisfies {\bf(a1)--(a3)} in Section \ref{s-shorttime}.

\begin{thm}\label{t-existence-2}
Let $(M^n,g_0)$ be a complete noncompact Hermitian manifold and let $T$ be the torsion of $g_0$.

\begin{enumerate}
  \item [(i)] Suppose $g_0$ satisfies {\bf(a1)--(a3)} and suppose $\mathrm{BK}(g_0)$ is uniformly bounded from above, and  $|T|_{g_0}, |\bar\p T|_{g_0}$ are uniformly bounded. Then there is a solution $g(t)$ of \eqref{e-CRflow-1} on $M\times[0,S_1]$ for some $S_1>0$ which are uniformly equivalent to $g_0$.
  \item [(ii)] Suppose the curvature of $g_0$, $|T|_{g_0}$ and $|\bar\p T|_{g_0}$ are uniformly bounded by $K>0$. Suppose also the Riemannian curvature of $g_0$ is bounded.  Then there is a constant $\a(n)>0$ depending only on $n$ such that \eqref{e-CRflow-1} has a smooth solution $g(t)$ on $M\times[0,\a K^{-1}]$ such that $\a g_0\le g(t)\le \a^{-1} g_0$ on $M\times[0,\a K^{-1}]$.
\end{enumerate}
\end{thm}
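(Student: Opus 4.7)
The plan is to adapt the argument of Theorem \ref{t-existence-1} to the setting where only the upper bound on bisectional curvature is available. For (i), I would first apply the same conformal cutoff: on each exhaustion $U_{\rho_0}$, form $h_0 = e^{2F}g_0$ which has bounded geometry of infinite order by Lemma \ref{l-boundedgeom} and inherits $\mathrm{BK}(h_0)\le K+\epsilon$, the torsion bounds, and perturbed versions of {\bf(a1)--(a3)} by a routine extension of Lemma \ref{l-conformal-1}. Lemma \ref{l-shortime-1} then produces a short-time solution $h(t)$, and the remaining task is to derive a priori bounds on an interval $[0,S_1]$ independent of $\rho_0$.

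The bounds on $\psi$ and $\dot\psi$ come from symmetric maximum principle arguments using {\bf(a2)} and {\bf(a3)}. For the upper bound on $\psi$, I would apply Lemma \ref{l-max} to $\Phi = \psi - (t/S')v - Ct - \epsilon\rho$: at an interior maximum the second-order condition combined with {\bf(a3)} forces the form inequality $\omega(t)\le(1+t(\beta'-1)/S'+\epsilon C_1)\omega_0$, whence $\dot\psi\le n\log(1+o(1))$, contradicting the first-order condition $\dot\psi\ge v/S' + C$ for $C$ sufficiently large. Hence $\psi\le Ct$, and since $\Psi\le 0$ by Lemma \ref{l-psi}, we obtain $\dot\psi\le C+n$. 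A symmetric argument using {\bf(a2)} gives $\psi\ge -C't$.

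The crucial new estimate is on $\Theta = \tr_h h_0$. Specializing Lemma \ref{l-trace}(ii) to $\hat h = h_0$ (so $\hat\nabla h_0\equiv 0$ and $\hat T = T_0$) gives
\begin{equation*}
\heat\Theta \leq C_1\Theta^2 + C_2\Theta^3\bigl(1 + \Theta^2\Upsilon + \Upsilon\bigr),
\end{equation*}
with $C_1,C_2$ depending only on $n,K,K_1$. To close this inequality in $\Theta$ alone, I would use the determinant identity: if $\sigma_i$ are the eigenvalues of $h$ with respect to $h_0$ in a simultaneously diagonalizing frame, then the trivial bounds $\sigma_j^{-1}\le \Theta$ and $\prod_j\sigma_j = e^{\dot\psi}$ give $\sigma_i\le e^{\dot\psi}\Theta^{n-1}$, hence $\Upsilon\le n e^{\dot\psi}\Theta^{n-1}$. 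Substituting the $\dot\psi$ upper bound yields a closed differential inequality $\heat\Theta\le C(1+\Theta)^{n+4}$; a maximum principle argument modelled on Lemma \ref{l-trace-est-1}, using an auxiliary barrier built from the function $\rho$ in {\bf(a1)}, then gives a uniform bound $\Theta\le C$ on $[0,S_1]$ with $S_1$ independent of $\rho_0$. Combined with the $\dot\psi$ bound this yields $C^{-1}h_0\le h(t)\le Ch_0$; higher-order Sherman-Weinkove estimates \cite{ShermanWeinkove2013} then pass to the limit $\rho_0\to\infty$.

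For (ii), bounded Riemannian curvature gives {\bf(a1)} by \cite{Tam2010} (extended to the Hermitian setting via the torsion bound), and with $u=v=0$, $S=S'=(2nK)^{-1}$ the hypothesis $\mathrm{BK}(g_0)\in[-K,K]$ yields $\tfrac12\theta_0\le\theta_0 - S\Ric(\theta_0)\le\tfrac32\theta_0$, so {\bf(a2),(a3)} hold with universal constants. Running the argument of (i) with $S_1=\alpha K^{-1}$ makes $KS_1 = K_1 S_1 = \alpha$, so all constants in the $\psi$, $\dot\psi$ and closed $\Theta$ estimates depend only on $n$ and $\alpha$; for $\alpha=\alpha(n)>0$ sufficiently small the closed $\Theta$-inequality keeps $\Theta$ and $\Upsilon$ bounded by functions of $n$ alone, producing the explicit equivalence $\alpha g_0\le g(t)\le \alpha^{-1}g_0$. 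The main technical obstacle is the $\Theta$-estimate of (i): without an a priori $\Upsilon$ bound one must carefully couple the $\Theta$-evolution with the determinant identity and make the noncompact maximum principle close uniformly in $\rho_0$.
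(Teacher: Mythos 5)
Your proposal is correct and follows essentially the same route as the paper: the conformal exhaustion $h_0=e^{2F}g_0$ on $U_{\rho_0}$, the $\psi$ and $\dot\psi$ bounds from maximum principle arguments using {\bf(a2)}/{\bf(a3)} (with $\Psi\le 0$ from Lemma \ref{l-psi}), the evolution inequality for $\Theta=\tr_h h_0$ from Lemma \ref{l-trace}(ii) closed by the determinant identity $\Upsilon\le c(n)e^{\dot\psi}\Theta^{n-1}$ into $\heat\Theta\le C(1+\Theta)^{n+4}$, the noncompact maximum principle (Lemma \ref{l-max}) to keep $\Theta$ bounded on a time interval independent of $\rho_0$, and taking $u=v=0$, $S\sim K^{-1}$ in (ii) to make all constants depend only on $n$ — these are exactly the paper's steps. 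The only immaterial routing differences are that the paper gets existence on $U_{\rho_0}$ (and, in (ii), on $M$ directly) by invoking Theorem \ref{t-existence-1} rather than Lemma \ref{l-shortime-1} plus continuation, closes the $\Theta$ bound by an explicit ODE comparison, and uses $F$ (an exhaustion of $(U_{\rho_0},h_0)$) rather than $\rho$ (which is bounded on $U_{\rho_0}$) as the $\epsilon$-barrier in the $\psi$ estimate.
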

\begin{proof}   It is easy to see that if $g_0$ satisfies the conditions in (ii) then it satisfies the conditions in (i). However, in (ii) we want to get more simple estimates for later use. Let us prove (ii) first. Since the Riemannian curvature of $g_0$,   and $|T|_{g_0} $ are uniformly bounded, we conclude that $g_0$ satisfies {\bf(a1)} by Lemma \ref{l-Chern-connection-2} and \cite{Tam2010}. Since the curvature is bounded from above by $K$, there is a constant $c_1(n)$ depending only on $n$ such that
$$
\theta_0-2c_1K^{-1}\Ric(\theta_0)\ge \beta\theta_0
$$
where $\theta_0$ is the \K form of $g_0$. Since the  curvature is also bounded from below by $-K$, and $|T|_{g_0}, |\bar\p T|_{g_0}$ are uniformly bounded, we can apply Theorem \ref{t-existence-1} to conclude that there is a solution $g(t)$ of the Chern-Ricci flow with initial data $g_0$ on $M\times[0,c_1 K^{-1}]$ so  that $g(t)$ is uniformly equivalent to $g_0$ on $M\times[0,c_1 K^{-1}]$. By Lemma \ref{l-psi-est}, with $S_1=c_1K^{-1}$ and $u=0$ there, we conclude that
\be\label{e-trace-est-4}
\lf|\frac{\theta^n(t)}{\theta_0^n}\ri|= |\exp(\dot\psi)|\le c_2
\ee
 on $M\times[0,c_1 K^{-1}]$ for some constant $c_2>0$ depending only on $n$, where $\psi$ is the solution of \eqref{e-MA-1} corresponding to $g(t)$ and $\theta(t)$ is the \K form of $g(t)$.

 Next we want to estimate $\Theta:=\tr_{g}g_0$. Let $\Upsilon=\tr_{g_0}g$. By \eqref{e-trace-est-4}, we have
 $$
 \Upsilon\le c(n)\Theta^{n-1}
 $$
 on $M\times[0,c_1 K^{-1}]$ for some constant $c(n)>0$ depending only on $n$. By Lemma \ref{l-trace} with $\hat g=g_0$, on  $M\times[0,c_1 K^{-1}]$, we have
\bee
\heat\Theta\le c_3 K \lf( \Theta^2+\Theta^3\lf(1+(1+\Theta^2)\Upsilon\ri)\ri)\le c_4K(\Theta+1)^{n+4},
\eee
for some positive constants $c_3, c_4$ depending only on $n$. Let $v(t)$ be a function of $t$ such that
$$
\frac{d}{dt}v=c_4Kv^{n+4}
$$
with $v(0)=n+1$. That is
$$
v(t)=\lf(\frac1{ (n+1)^{-n-3}- \lambda t }\ri)^{\frac 1{n+3}}
$$
where $\lambda=c_4(n+3)K$. $v(t)\ge0$ is defined for $0\le t\le \frac12\lambda^{-1}(n+1)^{-n-3}$ and is bounded by a constant depending only on $n$ on $[0,t_0]$ for all $t_0<\frac12\lambda^{-1}(n+1)^{-n-3}$. Fixed such $t_0$, since $\Theta$ is bounded on $M\times[0,c_1 K^{-1}]$,
$$
A=c_4K\sup_{M\times [0,t_1]}\sum_{i=0}^{n+4}(\Theta+1)^iv^{n+4-i}<\infty
$$
for $t_1=\min\{t_0,c_1K^{-1}\}$. Hence
\bee
\begin{split}
 \heat\lf( e^{-  At}(\Theta+1-v)\ri) =&e^{-\lambda At}\lf(\lambda((\Theta+1)^{n+4}-v^{n+4})- A(\Theta+1-v)\ri)\\
 =&e^{-\lambda At}\lambda(\Theta+1-v)\lf(\sum_{i=0}^{n+4}(\Theta+1)^iv^{n+4-i}-A\ri)\\
\le &0
\end{split}
\eee
at the point where $\Theta+1-v\ge0$. Since at $t=0$, $\Theta+1-v=0$, we have $\Theta-1+v\le 0$ by Lemma \ref{l-max}. Hence there is constant $c_5>0$ depending only on $n$  such that
$$
\Theta\le c_5
$$
on $M\times[0, \a K^{-1}]$, where
$$
\a=\min\{c_1, \frac14c_4^{-1}(n+1)^{-n-4}\}
$$
which depends only on $n$. Combining this with \eqref{e-trace-est-4}, we conclude that
$$
c_6 g_0\le g(t)\le c_6^{-1}g_0
$$
on $M\times[0,\a K^{-1}]$ for some positive constant $c_6$ depending only on $n$ by choosing $\a$ smaller than $c_6$.

The proof of (i) is similar to the proof of Theorem \ref{t-existence-1}. We only indicate the necessary modifications. Let $\rho$ be the function in {\bf (a1)} and for $\rho_0>>1$, let $F$, $h_0=e^{2F}g_0$ as in the proof of Theorem \ref{t-existence-1}. Using {\bf(a2), (a3)} as in the proof of Lemma \ref{l-conformal-1}, there exist $S_2>0$, $S_2'>0$  $\beta_1>0,  \beta'_1>0$ and smooth bounded functions $u_1, v_1$ which are independent of $\rho_0$ such that
\be\label{e-upper-1}
  \omega_0-S_2\Ric(\omega_0)+\ii\ddbar u_1\ge\beta\omega_0,
\ee
and
\be\label{e-upper-2}
 \omega_0-S_2'\Ric(\omega_0)+\ii\ddbar v_1\le \beta'\omega_0.
\ee
By Theorem \ref{t-existence-1}, there is a solution $h(t)$ to the Chern-Ricci flow which is uniformly equivalent to $h_0$ on $U_{\rho_0}\times[0, \frac{S_2}2]$. Similar to Lemma \ref{l-conformal-1}, $\mathrm{BK}(h_0)$, $|T_0|_{h_0}$ and $|\bar\p T|_{h_0}$ are uniformly bounded from above by a constant independent of $\rho_0$. Let $\Theta=\tr_{h}h_0$, if we can bound $ \dot\psi$ by a constant  independent of $\rho_0$ by the proof of (i),  we conclude that there is a constant $C_1, \frac12 S_2> S_3>0$ which are independent of $\rho_0$ such that
\bee
\Theta\le C_1
\eee
on $U_{\rho_0}\times[0,S_3]$. Let $\psi$ be the solution of \eqref{e-MA-1} corresponding to $h(t)$, we want to prove that $|\dot\psi|\le C_2$ for some constant independent of $\rho_0$ and on $U_{\rho_0}\times[0,S_3]$. If this is true, then on $U_{\rho_0}\times[0,S_3]$.
$$
C_3^{-1}h_0\le h(t)\le C_3h_0
$$
for some positive constant $C_3$ independent of $\rho_0$. From this one can proceed as in the proof of Theorem \ref{t-existence-1} to show that (i) is true.

In order to estimate $|\dot \psi|$, if one can obtain an upper bound for $\psi$, one can proceed as in the proof of Lemma \ref{l-conformal-1}. To bound $\psi$ from above,
let $$\Phi=\psi-\frac{t}{S_2'}v_1'-At-\e F,$$
$A$ is to be determined.

 If $\sup_{U_{\rho_0}\times[0,S_3]}\Phi>0$, then there is $x_0\in U_{\rho_0}, t_0>0$ such that $\Phi(x_0,t_0)$ attains the maximum.  At this point, we have $\ii(\ddbar \psi-\frac{t}{S_2'}\ddbar v_1)\le \e\ii\ddbar F$, and at $(x_0,t_0)$,
\be\label{e-BKupper}
\begin{split}
0\le \dps{\frac{\p}{\p t}\Phi }
=     \dps{\log \frac{(\omega_0-t\Ric(\omega_0)+\ii\p\bar\p \psi)^n}{\omega_0^n} }-\frac{1}{S_3}v_1-A \\
\end{split}
\ee
On the other hand,
\bee
\begin{split}
\omega_0-t\Ric(\omega_0)+\ii\p\bar\p \psi\le &
 \omega_0-t\Ric(\omega_0)+\frac{t}{S_2'}\ii\ddbar u+\e\ii\ddbar F \\
\le &(\frac t{S_2'}\beta'-1)\omega_0+\e\ii\ddbar F\\
\le&(\frac t{S_2'}\beta'-1+C_4)\omega_0
\end{split}
\eee
for some constant $C_4$ independent of $\rho_0$. Hence \eqref{e-BKupper} impossible, if $A$ is large enough independent of $\rho_0$. From this, it is easy to see that
$$
\psi\le C_5
$$
on $U_{\rho_0}\times[0,S_3]$ for some constant $C_5$ independent of $\rho_0$. This completes the proof of (i).
\end{proof}

 By Lemma \ref{l-equivalent} and Theorems \ref{t-existence-1}, \ref{t-existence-2}, we have
  \begin{cor}\label{c-CR-exist-1}
Let $(M^n,g_0)$ be a complete noncompact \K manifold such that $g_0$ is uniformly equivalent to another \K metric $g_1$ with bounded curvature. If either $\mathrm{BK}(g_0)\ge -K$ or $\mathrm{BK}(g_0)\le K$, then the \KR flow with initial data $g_0$ has short time existence $g(t)$ which is uniformly equivalent to $g_0$.
\end{cor}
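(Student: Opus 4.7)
The plan is to verify that $g_0$ satisfies all the hypotheses of either Theorem \ref{t-existence-1} or Theorem \ref{t-existence-2}(i), depending on which sign of bisectional curvature bound holds, and then invoke the appropriate theorem. Since $g_0$ is K\"ahler, the torsion vanishes identically, so the torsion conditions in both theorems are automatic; the real work is checking the structural hypotheses \textbf{(a1)}--\textbf{(a3)}, which the corollary forces us to obtain through $g_1$.

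First I would show that the reference metric $g_1$ itself satisfies \textbf{(a1)}, \textbf{(a2)} and \textbf{(a3)}. Condition \textbf{(a1)} is the standard construction of a smoothed distance function with bounded gradient and $\ddbar$, available on any complete K\"ahler manifold of bounded curvature by \cite{Tam2010}. For \textbf{(a2)} and \textbf{(a3)}, the uniform bound $|\Ric(g_1)|_{g_1}\le C$ yields constants $S_{*},\beta_{*},\beta_{*}'>0$ with
\bee
\beta_{*}'\theta_1\ \ge\ \theta_1-S_{*}\Ric(\theta_1)\ \ge\ \beta_{*}\theta_1,
\eee
so one may take $u=v\equiv 0$. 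Having these for $g_1$, I would then apply Lemma \ref{l-equivalent}: the uniform equivalence $\a g_1\le g_0\le \a^{-1}g_1$ immediately transfers \textbf{(a1)} to $g_0$ (with the same $\rho$), and produces bounded functions $u_0,v_0$ and constants $\wt S,\wt S',\wt\beta,\wt\beta'>0$ such that $g_0$ satisfies \textbf{(a2)} and \textbf{(a3)}.

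At this point I would split on the sign of the bisectional curvature bound. If $\mathrm{BK}(g_0)\ge -K$, all hypotheses of Theorem \ref{t-existence-1} are in force: condition (i) is trivial because $T\equiv 0$, condition (ii) is our assumption, and condition (iii) is \textbf{(a1)}. Moreover \textbf{(a2)} for $g_0$ with parameter $\wt S$ guarantees $S_B\ge\wt S>0$, so the theorem gives $S_A=S_B>0$ and hence a Chern-Ricci flow $g(t)$ on $M\times[0,S_A)$ uniformly equivalent to $g_0$; since $g_0$ is K\"ahler, the Chern-Ricci flow coincides with the K\"ahler-Ricci flow, finishing this case. If instead $\mathrm{BK}(g_0)\le K$, I would apply Theorem \ref{t-existence-2}(i): conditions \textbf{(a1)}--\textbf{(a3)} have been established, the bisectional curvature is bounded above by hypothesis, and the torsion (together with $\bar\p T$) vanishes; the theorem then produces a short-time solution $g(t)$ on some $M\times[0,S_1]$ uniformly equivalent to $g_0$, again a K\"ahler-Ricci flow.

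I do not anticipate any genuine obstacle: the corollary is essentially a bookkeeping reduction of the K\"ahler case to the two main existence theorems. The only point that requires a moment's care is making sure that the function $u_0$ produced by Lemma \ref{l-equivalent}(ii) is bounded (which it is, by the construction $u_0=u+\wt S\log(\theta_1^n/\theta_0^n)$ and the uniform equivalence of $\theta_0,\theta_1$), so that it meets the boundedness requirement in the definition of $S_B$.
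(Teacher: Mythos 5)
Your proposal is correct and follows exactly the route the paper intends: the paper derives the corollary directly "by Lemma \ref{l-equivalent} and Theorems \ref{t-existence-1}, \ref{t-existence-2}," i.e.\ verify \textbf{(a1)}--\textbf{(a3)} for the bounded-curvature metric $g_1$ (with $u=v\equiv 0$), transfer them to $g_0$ via Lemma \ref{l-equivalent}, and invoke Theorem \ref{t-existence-1} or Theorem \ref{t-existence-2}(i) according to the sign of the bisectional curvature bound, the torsion terms being vacuous in the K\"ahler case. Your filling-in of the details (boundedness of the transferred potential, positivity of $S_B$, and the identification of the Chern--Ricci flow with the K\"ahler--Ricci flow for K\"ahler initial data) is accurate.
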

\begin{rem}\begin{enumerate}
             \item [(i)] One might compare Corollary \ref{c-CR-exist-1} with some results in \cite{ChauLiTam2016}.
             \item [(ii)] By \cite{WuZheng,YangZheng2013} one can construct $U(n)$ invariant \K metrics on $\C^n$ with nonnegative but unbounded bisectional curvature. By \cite{ChauLiTam2016}, such a metric is uniformly equivalent to a \K metric with bounded curvature. Using their method, one may also construct similar examples with nonpositive bisectional curvature.
           \end{enumerate}

\end{rem}

Next, we want to relax the condition {\bf (a1)} in the existence of Chern-Ricci flow.
 We have the following:

 \begin{thm}\label{t-existence-3} Let $(M^n,g_0)$ be a complete noncompact Hermitian manifold with \K form $\theta_0$ with torsion $T$ such that $|T|_{g_0}, |\bar\p T|_{g_0}$ are uniformly bounded. Assume the following are true:
  \begin{enumerate}
    \item [(i)] There is an exhaustion function $\rho\ge1$ such that
     $$
     \limsup_{x\to\infty}\lf(\rho^{-2}|\p\rho|^2_{g_0}+\rho^{-1}|\ddbar \rho|_{g_0}\ri)=\a<\infty.
     $$
     \item[(ii)] There is $S>0, \beta>0$ and a smooth bounded function $u$ such that
     $$
     \theta_0-S\Ric(\theta_0)+\ii\ddbar u\ge \beta\theta_0.
     $$
    \item [(iii)] $\mathrm{BK}(g_0)\ge -K$ for some $K>0$.
  \end{enumerate}
   Then the Chern-Ricci flow has a solution $g(t)$ with initial data $g_0$  on $M\times[0,S_1)$, where
  $$
S_1=\dps{\frac1 {S^{-1}+nc\a}}.
$$
Here $c>0$ is an absolute constant. In particular, $S_1=S$ if $\a=0$.  Moreover, $g(t)$   is uniformly equivalent to $g$ on $M\times[0,S']$ for all $0<S'<S_1$.

 \end{thm}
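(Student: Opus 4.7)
The plan is to imitate the proof of Theorem \ref{t-existence-1}, constructing modified metrics $h_0 = e^{2F}g_0$ on the sublevel sets $U_{\rho_0} = \{\rho < \rho_0\}$ with $F = \mathfrak{F}(\rho/\rho_0)$ and passing to the limit as $\rho_0 \to \infty$, while carefully tracking how the relaxed condition (i) propagates through the estimates. The essential new difficulty is that $|\partial F|_{g_0}$ and $|\partial\bar\partial F|_{g_0}$ are no longer uniformly bounded, since now $|\partial\rho|_{g_0}^2 \le \alpha\rho^2$ and $|\partial\bar\partial\rho|_{g_0} \le \alpha\rho$ only asymptotically. Nevertheless, the conformal factor $e^{-F}$ built into the $h_0$-norms exactly cancels the blowup of $\mathfrak{F}^{(k)}$ near $\partial U_{\rho_0}$ that is controlled by Lemma \ref{l-exhaustion-1}(ii), and a direct computation using $\rho/\rho_0\le 1$ on the support of $F$ yields
\begin{equation*}
|\partial F|^2_{h_0} \le c\alpha, \qquad |\partial\bar\partial F|_{h_0} \le c\alpha,
\end{equation*}
with $c$ an absolute constant independent of $\rho_0$.

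These uniform estimates, combined with the conformal transformation formulas of Lemma \ref{l-conformal-a}, show that $h_0$ has torsion $T_0$ with $|T_0|_{h_0}$ and $|\bar\partial T_0|_{h_0}$ uniformly bounded, and that $\mathrm{BK}(h_0) \ge -(K + c\alpha)$; together with the natural proper function on $(U_{\rho_0},h_0)$ obtained from the blowup of $e^{2F}$ near $\partial U_{\rho_0}$, this verifies the Hermitian hypotheses of Theorem \ref{t-existence-1} for $h_0$ with constants depending on $\alpha$ but not on $\rho_0$. For the analog of condition \textbf{(a2)}, using $\Ric(\omega_0) = \Ric(\theta_0) - 2n\sqrt{-1}\partial\bar\partial F$ and the test function $u_1 = (\hat S/S)u$, the computation
\begin{equation*}
\omega_0 - \hat S\Ric(\omega_0) + \sqrt{-1}\partial\bar\partial u_1
\ge \bigl((1 - 2nc\alpha\hat S)e^{2F} - (1-\beta)\hat S/S\bigr)\theta_0
\end{equation*}
holds, which exceeds $\hat\beta\omega_0$ for some $\hat\beta>0$ whenever $\hat S(S^{-1}+nc\alpha)<1$, that is, $\hat S < S_1$ (after absorbing absolute constants). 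This is precisely the source of the reduction in existence time from $S$ to $S_1$: the conformal term $2n\hat S\sqrt{-1}\partial\bar\partial F$ erodes the positivity margin by an amount proportional to $\alpha\hat S$.

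With these verifications in place, Theorem \ref{t-existence-1} (equivalently, its proof applied verbatim with the $\alpha$-modified constants) produces a Chern-Ricci flow $h^{(\rho_0)}(t)$ on $U_{\rho_0}\times[0,\hat S]$ uniformly equivalent to $h_0$ for each $\hat S<S_1$. Lemmas \ref{l-psi-est} and \ref{l-trace-est-1} then deliver a priori bounds for $\dot\psi$, $\psi$, and $\tr_{h_0}h^{(\rho_0)}$ depending only on $n$, $K$, $\alpha$, $S$, $\sup|u|$, and $\hat S$, hence uniform in $\rho_0$; the Sherman-Weinkove higher-order estimates then promote these to uniform $C^k$ bounds on compact subsets of $M$. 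A subsequential limit as $\rho_0\to\infty$ yields the desired Chern-Ricci flow $g(t)$ on $M\times[0,S_1)$, uniformly equivalent to $g_0$ on each $M\times[0,S']$, $S'<S_1$. The main obstacle lies in adapting Lemma \ref{l-boundedgeom} to the present setting: the short-time existence via Lemma \ref{l-shortime-1} requires bounded geometry of $(U_{\rho_0},h_0)$, and the linear Lipschitz estimate $\rho(y) \le \rho(x)+C_1 d_{g_0}(x,y)$ used there must be replaced by the log-Lipschitz bound $\rho(y) \le \rho(x)\exp(\sqrt{\alpha}\,d_{g_0}(x,y))$ that condition (i) provides, and the chart construction reworked accordingly (with radii chosen $\alpha$-dependently).
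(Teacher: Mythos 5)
Your proposal is correct and follows essentially the same route as the paper: the same conformal modification $h_0=e^{2F}g_0$ with $F=\mathfrak{F}(\rho/\rho_0)$, the same observation that condition (i) together with Lemma \ref{l-exhaustion-1}(ii) gives $|\p F|^2_{h_0},|\ddbar F|_{h_0}\le c(\a+\e)$ uniformly in $\rho_0$, the same verification of {\bf(a2)} for $\omega_0$ with the existence time eroded to $S_1=(S^{-1}+2nc(\a+\e))^{-1}$, and then Theorem \ref{t-existence-1} (with Lemmas \ref{l-psi-est}, \ref{l-trace-est-1} and the Sherman--Weinkove estimates) on $U_{\rho_0}$ followed by a limit $\rho_0\to\infty$, $\e\to 0$. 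The only cosmetic difference is that the paper keeps the $\e$ from the limsup in condition (i) explicitly and removes it at the end, whereas you absorb it; the bounded-geometry point you flag is exactly what the paper handles by invoking Theorem \ref{t-existence-1} on $(U_{\rho_0},h_0)$.
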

 \begin{proof} We may assume that $\beta<1$. As in the proof of Theorem \ref{t-existence-1}, for $\rho_0>>1$, let $U_{\rho_0}$ be the component of $\{\rho(x)<\rho_0\}$ containing a fixed point $x_0\in M$. Let $\mathfrak{F}$  be as in Lemma \ref{l-exhaustion-1} and let $h_0=e^{2F}g_0$ with $F(x)=\mathfrak{F}(\rho(x)/\rho_0)$. For any $\e>0$, by (i) and the properties of $\mathfrak{F}$, there is an absolute constant $c_1>0$ such that
\be\label{e-general-1}
|\p F|^2_{g_0}\le c_1(\a+\e)e^{2F},\ \  |\ddbar F|_{g_0}\le c_1(\a+\e)e^{2F}.
\ee
provided $\rho_0$ is large enough. Let $\omega_0$ be the \K form of $h_0$ then by (ii)
\bee
\begin{split}
\omega_0-S_1\Ric(\omega_0)=&\omega_0-S_1\Ric(\theta_0)+2nS_1\ii\ddbar F\\
\ge& \omega_0+\frac{S_1}S\lf((\beta-1)\theta_0-\ii\ddbar u\ri)-2nc_1(\a+\e)S_1e^{2F}\theta_0\\
\ge&\lf(1+\frac{S_1}S(\beta-1)-2nc_1(\a+\e)S_1\ri)\omega_0-\frac{S_1 }S\ii\ddbar  u\\
\ge&\frac{S_1}S\beta-\frac{S_1 }S\ii\ddbar  u
\end{split}
\eee
if $S_1$ is such that $1-\frac{S_1}S-2nc_1(\a+\e)S_1=0$. That is,
$$
S_1=\lf(S^{-1}+2nc_1(\a+\e)\ri)^{-1}.
$$
 where  have used the fact that $F\ge0$ and  $\beta<1$. By Lemma \ref{l-conformal-a} and \eqref{e-general-1}, we can conclude that the torsion $T_0$ of $h_0$ satisfies $|T_0|_{h_0}, |\bar\p T_0|_{h_0}$ are uniformly bounded by a constant independent of $\rho_0$. Moreover, $\mathrm{BK}(h_0)$ is uniformly bounded below by a constant independent of $\rho_0$. By Theorem \ref{t-existence-1}, we conclude that the Chern-Ricci flow has a solution $h(t)$ on $U_{\rho_0}\times[0,S_1)$ with initial data $h_0$. Moreover, by Lemmas \ref{l-psi-est}, \ref{l-trace-est-1}, we conclude that for any $0<S'<S_1$, there is a constant $C_1>0$ independent of $\rho_0$ such that
$$
C_1h_0\le h(t)\le C_1^{-1}h_0
$$
on $U_{\rho_0}\times[0,S']$. As in the proof of Theorem \ref{t-existence-1}, we conclude that the theorem is true by letting $\e\to0$.
 \end{proof}
 Similarly, we have
 \begin{thm}\label{t-existence-4} Let $(M^n,g_0)$ be a complete noncompact Hermitian manifold with \K form $\theta_0$ with torsion $T$ such that $|T|_{g_0}, |\bar\p T|_{g_0}$ are uniformly bounded. Assume the following are true:
  \begin{enumerate}
    \item [(i)] There is an exhaustion function $\rho\ge1$ such that
     $$
     \limsup_{x\to\infty}\lf(\rho^{-2}|\p\rho|^2_{g_0}+\rho^{-1}|\ddbar \rho|\ri)=\a<\infty.
     $$
     \item[(ii)] There is $S>0, \beta>0, \beta'>0$ and a smooth bounded functions $u, v$ such that
     $$
     \theta_0-S\Ric(\theta_0)+\ii\ddbar u\ge \beta\theta_0.
     $$
     and
     $$
     \theta_0-S\Ric(\theta_0)+\ii\ddbar v\le \beta'\theta_0.
     $$
    \item [(iii)] $\mathrm{BK}(g_0)\le K$ for some $K>0$.
  \end{enumerate}
   Then the Chern-Ricci flow has a solution $g(t)$ with initial data $g_0$ has short time solution  on $M\times[0,S_1)$ for some $S_1>0$. Moreover, $g(t)$   is uniformly equivalent to $g$ on $M\times[0,S_1)$.

 \end{thm}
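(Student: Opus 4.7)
The plan is to mirror the proof of Theorem \ref{t-existence-3}, replacing the appeal to Theorem \ref{t-existence-1} by one to Theorem \ref{t-existence-2}(i) since here only an upper bound on $\mathrm{BK}(g_0)$ is available. For each $\rho_0 \gg 1$ I will build a conformal exhaustion metric $h_0$ on a domain $U_{\rho_0} \subset M$ that meets the hypotheses of Theorem \ref{t-existence-2}(i) with parameters independent of $\rho_0$, apply that theorem to produce solutions $h(t)$, and pass to a limit as $\rho_0 \to \infty$.

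With $\mathfrak{F}$ the cutoff function of Lemma \ref{l-exhaustion-1} and $F = \mathfrak{F}(\rho/\rho_0)$, set $h_0 = e^{2F} g_0$ on the component $U_{\rho_0}$ of $\{\rho < \rho_0\}$ containing the base point. Using assumption (i) together with $\exp(-k\mathfrak{F})\mathfrak{F}^{(k)} \le C$, one obtains $|\p F|_{h_0}^2 + |\ddbar F|_{h_0} \le c_1(\a+\e)$ on $U_{\rho_0}$, with $c_1$ an absolute constant independent of $\rho_0$. Lemma \ref{l-conformal-a} then yields $|T_0|_{h_0} + |\bar\p T_0|_{h_0} \le C'$ and $\mathrm{BK}(h_0) \le K'$ on $U_{\rho_0}$ uniformly in $\rho_0$, while condition {\bf(a1)} for $h_0$ is immediate from (i).

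To verify {\bf(a2)} for $h_0$, use $\omega_0 \ge \theta_0$ (since $F \ge 0$) and $\ii\ddbar F \ge -c_1(\a+\e)\omega_0$ together with the lower side of (ii) to obtain
\bee
\omega_0 - S_1\Ric(\omega_0) + \tfrac{S_1}{S}\ii\ddbar u \ge \Big(1 - \tfrac{S_1}{S}(1-\b) - 2nS_1 c_1(\a+\e)\Big)\omega_0 \ge \b_1\omega_0
\eee
for some $\b_1 > 0$ as soon as $S_1 > 0$ is small (independently of $\rho_0$). The upper side of (ii) combined with $\ii\ddbar F \le c_1(\a+\e)\omega_0$ similarly produces
\bee
\omega_0 - S_1\Ric(\omega_0) + \tfrac{S_1}{S}\ii\ddbar v \le \b_1'\omega_0
\eee
for a uniform $\b_1'$, which gives {\bf(a3)} for $h_0$ with the same $S_1$. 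Theorem \ref{t-existence-2}(i) then furnishes a solution $h(t)$ of the Chern-Ricci flow on $U_{\rho_0} \times [0,S_2]$ uniformly equivalent to $h_0$; a diagonal subsequence $\rho_0 \to \infty$, combined with the Sherman-Weinkove \cite{ShermanWeinkove2013} interior estimates applied once this uniform equivalence is in hand, extracts the desired limit $g(t)$ on $M \times [0,S_1)$ with $S_1 := S_2$, solving \eqref{e-CRflow-1} and uniformly equivalent to $g_0$.

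The hard part will be showing that $S_2$ and the equivalence constant from Theorem \ref{t-existence-2}(i) can be chosen independently of $\rho_0$. Tracing its proof, this reduces to a uniform bound on the quantity $\mathfrak{m} = \sup|(S_2-t)\dot\psi + \psi + nt - \tfrac{S_2}{S}u|$ appearing in Lemma \ref{l-trace-est-1}, and hence to uniform two-sided control of $\dot\psi$. The upper bound stems from Lemma \ref{l-psi} and condition {\bf(a3)} via the barrier argument used at the end of the proof of Theorem \ref{t-existence-2}(i); the lower bound follows from Lemma \ref{l-psi-est}(iii) and {\bf(a2)}. Both estimates rely only on the uniform data established above, which is precisely what the conformal construction is designed to provide.
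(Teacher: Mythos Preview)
Your proposal is correct and follows precisely the route the paper indicates: the paper gives no explicit proof of Theorem \ref{t-existence-4}, writing only ``Similarly, we have'' after the proof of Theorem \ref{t-existence-3}, and the intended argument is exactly to parallel that proof with Theorem \ref{t-existence-2}(i) replacing Theorem \ref{t-existence-1}. Your identification of the one genuine issue --- that the existence time and equivalence constants coming from Theorem \ref{t-existence-2}(i) must be shown to be independent of $\rho_0$, which forces one to trace through its proof and verify that every input (the torsion bounds, the $\mathrm{BK}$ upper bound, the constants in {\bf(a2)}--{\bf(a3)}, and the barrier functions) is uniform in $\rho_0$ --- is exactly right, and your sketch of how the two-sided control of $\dot\psi$ comes from {\bf(a2)} and {\bf(a3)} via Lemma \ref{l-psi-est}(iii) and the barrier argument at the end of the proof of Theorem \ref{t-existence-2}(i) is the correct mechanism.
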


\section{existence of \KR flow}\label{s-KRF}

In this section, we will use the previous construction of Chern-Ricci flow and the methods in \cite{SimonTopping2017} by Simon-Topping   (who  use some ideas by  Hochard \cite{Hochard2016})   to prove the following:

\begin{thm}\label{t-KRF-existence-1}
Suppose $(M^n,g_0)$ is a complete noncompact \K manifold with complex dimension n with $BK\geq 0$ such that  $V_0(x,1)\geq v_0>0$ for some $v_0>0$ for all $x\in M$.

 \begin{enumerate}
   \item [(i)] There exist $ S=S(n,v_0)>0, a(n,v_0)>0$ depending only on $n, v_0$ such that the \K Ricci flow has a complete  solution $g(t)$  on $M\times[0,S]$ and satisfies
$$  |Rm|(x,t)\leq \frac{a}{t}$$
on $M\times (0,S].$
   \item [(ii)]   $g(t)$ has nonnegative bisectional curvature.
       \item[(iii)] $V_t(x,1)\ge \frac 12v_0$ on $M\times(0,S]$, where $V_t(x,1)$ is the volume of ball of radius 1 centered at $x$ with respect to $g(t)$. If $g_0$ has maximal volume growth, then $g(t)$ also has maximal volume growth.

 \end{enumerate}

\end{thm}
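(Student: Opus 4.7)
The plan is to realize the desired \KR flow as a Cheeger-Gromov limit of Chern-Ricci flows produced by our Section~4 existence theorems, following the overall strategy of Simon-Topping and Hochard. First, for each large $\rho_0$ I would run the conformal modification from the proof of Theorem~\ref{t-existence-1}: let $\rho$ be the exhaustion function provided by $\mathrm{BK}(g_0)\ge 0$ (which exists by \cite{NiTam2013}), set $F=\mathfrak{F}(\rho/\rho_0)$ and $h_0^{\rho_0}=e^{2F}g_0$ on the central component $U_{\rho_0}$ of $\{\rho<\rho_0\}$. Then $(U_{\rho_0},h_0^{\rho_0})$ is complete with bounded geometry of infinite order (Lemma~\ref{l-boundedgeom}), so by Lemma~\ref{l-shortime-1} the Chern-Ricci flow has a short-time solution $h^{\rho_0}(t)$. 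Crucially, $h_0^{\rho_0}\equiv g_0$ on the central region $\{\rho<(1-\kappa+\kappa^2)\rho_0\}$ where the initial data is \K, so on this region the Chern-Ricci flow coincides with the \KR flow, and $\mathrm{BK}\ge 0$ is preserved there by Mok's theorem.

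The heart of the argument is to extract a uniform estimate $|Rm|_{h^{\rho_0}}(x,t)\le a(n,v_0)/t$ on a uniform time interval $[0,S(n,v_0)]$ over the central K\"ahler region, independent of $\rho_0$. This is where the methods of \cite{SimonTopping2017,Hochard2016} must be adapted to the K\"ahler setting: using the preserved nonnegativity of $\mathrm{BK}$ together with the initial non-collapsing $V_0(x,1)\ge v_0$, plus a Perelman-type pseudolocality / no-local-collapsing argument, one bootstraps control of $|Rm|$. The estimates of Section~3 (Lemmas~\ref{l-psi-est} and \ref{l-trace-est-1}) supply the base comparison between $h^{\rho_0}(t)$ and $h_0^{\rho_0}$ on an initial sub-interval, and a Simon-Topping pyramid iteration then promotes this to the desired $a/t$ bound while simultaneously yielding $V_t(x,1)\ge v_0/2$.

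With the uniform $|Rm|\le a/t$ in hand, Shi's higher derivative estimates produce uniform $C^k_{loc}$ control on compact subsets of $M\times(0,S]$, and a diagonal Cheeger-Gromov extraction as $\rho_0\to\infty$ yields a complete \KR flow $g(t)$ on $M\times[0,S]$ satisfying (i) and (iii), with $\mathrm{BK}(g(t))\ge 0$ preserved in the limit. For the maximal volume growth clause in (ii), I would combine $\Ric(g(t))\ge 0$ and Bishop-Gromov comparison with the distance distortion coming from $|Rm|\le a/t$ (so that $g_0$- and $g(t)$-balls are comparable up to constants depending only on $n,v_0$) to transfer a bound $V_0(B_0(x,r))\ge\kappa r^{2n}$ to $g(t)$. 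The main obstacle is the uniform $a/t$ estimate: since $\mathrm{BK}\ge 0$ is only guaranteed on the K\"ahler core and not in the conformal corona $\{F>0\}$, all Perelman-type arguments must be confined to the core while remaining independent of $\rho_0$, which is exactly the point at which the Simon-Topping/Hochard machinery has to be carefully married to the K\"ahler structure.
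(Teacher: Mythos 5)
Your overall strategy (conformal exhaustion trick, Simon--Topping/Hochard-type iteration, local $a/t$ estimates, Cheeger--Gromov limit) belongs to the same family as the paper's, but the proposal leaves the actual engine of the proof unsupplied and contains two steps that would fail as written. First, you cannot start from "the flows $h^{\rho_0}(t)$ on a uniform time interval $[0,S(n,v_0)]$ over the central K\"ahler region": the short-time existence of Lemma \ref{l-shortime-1} (or Theorem \ref{t-existence-2}) applied to $h_0^{\rho_0}=e^{2F}g_0$ only yields an existence time depending on the curvature bound of $h_0^{\rho_0}$ on $U_{\rho_0}$, which is \emph{not} uniform in $\rho_0$ since $g_0$ may have unbounded curvature. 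The uniform time is precisely what must be manufactured, and in the paper this is done by a pyramid scheme in both space and time: one proves statements $P(k)$ asserting existence on $B_0(p,R-r_k)\times[0,t_k]$ with $|\Rm|\le a/t$, where $t_{k+1}=(1+\mu)^2t_k$ grows geometrically and $r_{k+1}=r_k+\e t_k^{1/2}$ shrinks by a summable amount; the passage from $t_k$ to $t_{k+1}$ is achieved by \emph{restarting} the flow at time $t_k$ via the local existence Lemma \ref{l-extension-1} at scale $\rho^2=C_0^{-1}t_k$, which is legitimate only because Lemma \ref{l-curvestimate} (i.e.\ \cite[Lemma 4.1]{LeeTam2017}) provides $|\Rm|\le C_0/t_k$, $|\nabla\Rm|\le C_0t_k^{-3/2}$ and $\mathrm{inj}\ge(C_0^{-1}t_k)^{1/2}$ at time $t_k$ from non-collapsing plus a \emph{local} lower bisectional bound; the $a/t$ bound on the new time slab then comes from the Lott--Zhang local estimate (Lemma \ref{l-curv1}), and the shrinking-balls Lemma \ref{l-balls} keeps the domains nested. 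Your "Perelman-type pseudolocality plus pyramid iteration" names the right circle of ideas but does not identify these ingredients or the time-doubling/restart mechanism, which is the mathematical content of part (i).

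Second, your claim that $\mathrm{BK}\ge0$ "is preserved on the core by Mok's theorem" is not available: the core $\{F=0\}$ carries an incomplete flow, and the preservation results of Mok/Bando/Shi are global maximum-principle arguments requiring completeness and bounded curvature; a purely local statement of preservation of nonnegativity is false without further input. The paper instead uses the local quantitative result Lemma \ref{l-lowerbound-BK-1} (\cite[Theorem 3.2]{LeeTam2017}), which only yields $\mathrm{BK}\ge -r^{-2}$ on an interior ball and only \emph{after} an $a/t$ curvature bound is already in force on that ball; this weaker bound is exactly what feeds back into Lemma \ref{l-curvestimate} at each stage of the induction, and genuine nonnegativity of $\mathrm{BK}(g(t))$ is recovered only at the very end, for the complete limit flow with $|\Rm|\le a/t$, by \cite{LeeTam2017,HuangTam2015}. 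Without these two repairs --- the restart/time-doubling induction in place of a uniform-time estimate for a single conformal flow, and the local quantitative BK lower bound in place of Mok --- the proposal does not close.
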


As an application, we give another proof of the following result by Liu \cite{Liu2017}.
\begin{cor}\label{c-uniformization}
Let $(M^n,g_0)$ be a complete noncompact \K manifold with nonnegative bisectional curvature and with maximal volume growth. Then $M$ is biholomorphic to $\C^n$.
\end{cor}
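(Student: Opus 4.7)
The plan is to combine Theorem \ref{t-KRF-existence-1} with the uniformization result of Chau--Tam \cite{ChauTam2006}, which asserts that a complete noncompact \K manifold with \emph{bounded} nonnegative bisectional curvature and maximal volume growth is biholomorphic to $\C^n$. The role of the \KR flow is simply to bridge the gap: it takes the initial metric $g_0$, whose curvature may be unbounded, and produces for each small $t>0$ a \K metric $g(t)$ in which the bisectional curvature is still nonnegative but is now bounded, while preserving maximal volume growth.

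First, I would check that the hypotheses of Theorem \ref{t-KRF-existence-1} are met. Nonnegative bisectional curvature is given. For the uniform unit-ball volume lower bound, I use that nonnegative bisectional curvature implies nonnegative Ricci curvature, so Bishop--Gromov volume comparison applies: the function $r\mapsto V_0(x,r)/r^{2n}$ is nonincreasing for every $x\in M$. Since $g_0$ has maximal volume growth at some base point $x_0$, there is $c>0$ with $V_0(x_0,r)\ge cr^{2n}$ for all $r\ge 1$. For any $x\in M$ and any $r\ge 1+d_0(x,x_0)$, we have $B_0(x_0,r-d_0(x,x_0))\subset B_0(x,r)$, hence
\[
\frac{V_0(x,1)}{1}\ge \frac{V_0(x,r)}{r^{2n}}\ge \frac{V_0(x_0,r-d_0(x,x_0))}{r^{2n}}\ge c\,\frac{(r-d_0(x,x_0))^{2n}}{r^{2n}}\xrightarrow[r\to\infty]{}c.
\]
Thus $V_0(x,1)\ge c =: v_0>0$ uniformly in $x$, so condition of Theorem \ref{t-KRF-existence-1} is satisfied.

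Next, I would invoke Theorem \ref{t-KRF-existence-1} to produce a complete \KR flow solution $g(t)$ on $M\times[0,S]$ with $S=S(n,v_0)>0$, such that $|\mathrm{Rm}|(\cdot,t)\le a(n,v_0)/t$ for $t\in(0,S]$, $g(t)$ has nonnegative bisectional curvature, and $g(t)$ still has maximal volume growth (by part (ii) of the theorem). Now fix any $t_0\in(0,S]$. Then $(M,g(t_0))$ is a complete noncompact \K manifold with \emph{bounded} nonnegative bisectional curvature and maximal volume growth.

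Finally, applying the theorem of Chau--Tam \cite{ChauTam2006} to $(M,g(t_0))$ yields a biholomorphism from $M$ onto $\C^n$, completing the proof.

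The plan is essentially immediate given the two main ingredients, so there is no serious obstacle here; the only non-cosmetic point is the derivation of the uniform lower volume bound $V_0(x,1)\ge v_0$ from maximal volume growth at a single base point, which uses Bishop--Gromov and the nonnegativity of Ricci curvature as above.
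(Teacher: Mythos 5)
Your proposal is correct and follows essentially the same route as the paper: apply Theorem \ref{t-KRF-existence-1} to deform $g_0$ to a complete \K metric with bounded nonnegative bisectional curvature and maximal volume growth, then invoke Chau--Tam \cite{ChauTam2006}. Your Bishop--Gromov argument deriving the uniform bound $V_0(x,1)\ge v_0$ from maximal volume growth is a correct filling-in of a step the paper leaves implicit.
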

\begin{proof} By   Theorem \ref{t-KRF-existence-1}, $M^n$ also supports a complete \K metric with nonnegative and {\it bounded } bisectional curvature with maximal volume growth. By the result in \cite{ChauTam2006},  $M$ is biholomorphic to $\C^n$.

\end{proof}

To prove the theorem, we introduce   some notations and terminologies:

 \begin{itemize}

 \item Let $(M,g)$ be a Riemannian manifold without boundary which may not be complete. Let $x\in M$, $r>0$. We say that $B(x,r)\subset M$ if the geodesic ball $B(x,r)$ is a subset of $M$. Namely,   $\exp_x$ is defined on the ball of radius $r$ in the tangent space $T_x(M)$ center at the origin.       We say that the injectivity radius $\iota(x)$ of $x$ satisfies $\iota(x)\ge \iota_0$, if $B(x,\iota_0)\subset M$ and $\exp_x$ is a diffeomorphism from the ball of radius $\iota_0$ onto its image $B(x,\iota_0)$.

 \item  Suppose $(M,g_0)$ is a Riemannian manifold (not necessarily complete) and let $g(t)$ be a smooth family of Riemannian metrics on $M$ with $g(0)=g_0$. Then the  geodesic ball  with center at $x$ and radius $r$ with respect to $g(t)$ will be denoted by $B_t(x,r)$ whose volume will be denoted by $V_t(x,r)$.

     \item In this section, all connections and curvature tensors on a Hermitian manifold are referred to the Riemannian connections and Riemannian curvatures, unless specified otherwise.

 \end{itemize}


  In the following Lemmas \ref{l-extension-1}--\ref{l-balls},  we do not assume the manifold is complete.

\begin{lma}\label{l-extension-1} There exists $1>\a>0$ depending only on $n$ so that the following is true: Let $(N^n,g_0)$ be a \K manifold and  $U\subset N$  is a precompact  open set. Let $\rho>0$ be such that $B_0(x,\rho)\subset\subset N$, $|\Rm|(x)\le \rho^{-2}$ and $  \mathrm{inj}_{g_0}(x)\ge \rho$ for all $x\in U$. Assume $U_\rho$ is nonempty. Then for any component $X$ of $U_\rho$, there is a solution $g(t)$ to the \KR flow  on  $X\times[0,\a \rho^2]$, where for any $\lambda>0$
$$
U_\lambda:=\{x\in U|\ B_0(x,\lambda)\subset\subset U\},
$$
with $g(t)$ satisfies the following:
\begin{enumerate}
  \item [(i)] $g(0)=g_0$ on $X$
  \item [(ii)]
  $$
  \a g_0\le g(t)\le \a^{-1}g_0
  $$
  on $X\times[0,\a\rho^2]$.
\end{enumerate}
\end{lma}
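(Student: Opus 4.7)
After scaling $g_0 \mapsto \rho^{-2} g_0$ we may assume $\rho = 1$, so that on $U$ we have $|\Rm(g_0)| \le 1$, $\mathrm{inj}_{g_0}(x) \ge 1$, and we must produce the flow on $X \times [0, \alpha(n)]$. Because $B_0(x,1) \subset\subset U$ for every $x \in X$, the closure $\bar X$ lies at $g_0$-distance at least $1$ from $N \setminus U$; combined with the precompactness of $U$ in $N$ this gives $X \subset\subset U$. Fix an open set $W$ with $X \subset\subset W \subset\subset U$. The strategy is to modify $g_0$ conformally into a \emph{complete} Hermitian metric on $U$ that still equals $g_0$ on $W$, solve the resulting Chern-Ricci flow via Theorem \ref{t-existence-2}(ii), and then restrict to $X$.

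Using the bounded geometry of $g_0$ on $U$, one constructs (by smoothing the distance function $d_{g_0}(\cdot, W)$ in the local biholomorphic charts produced by the injectivity radius bound, as in \cite{Hochard2016,SimonTopping2016}) a smooth function $\sigma : U \to [0,1)$ with $\sigma \equiv 0$ on $W$, $\sigma(x) \to 1$ as $x \to \partial U$, and $|\p \sigma|_{g_0}$, $|\ddbar \sigma|_{g_0}$ bounded uniformly on $U$. Setting $F = \mathfrak{F}(\sigma)$ with $\mathfrak{F}$ as in Lemma \ref{l-exhaustion-1}, and $h_0 = e^{2F} g_0$, the metric $h_0$ is complete on $U$ (since $F \to \infty$ at $\partial U$), coincides with $g_0$ on $W$, and by Lemma \ref{l-exhaustion-1}(ii) combined with the conformal change formulas of Lemma \ref{l-conformal-a} and the K\"ahlerity of $g_0$, the Chern curvature of $h_0$, its torsion $T_0$, and $\bar\p T_0$ are all bounded in the $h_0$-norm by a constant $K = K(n)$ depending only on $n$.

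Apply Theorem \ref{t-existence-2}(ii) to $(U, h_0)$ to obtain a Chern-Ricci flow $h(t)$ on $U \times [0, \alpha_0 K^{-1}]$ with $\alpha_0 h_0 \le h(t) \le \alpha_0^{-1} h_0$, where $\alpha_0 = \alpha_0(n)$. Since $h_0 = g_0$ is K\"ahler on the open set $W$, the preservation of K\"ahlerity on open subsets under the Chern-Ricci flow (recorded in Section \ref{s-intro}) ensures that $h(t)$ is K\"ahler on $W$. Hence $g(t) := h(t)|_X$ is a smooth K\"ahler-Ricci flow on $X$ with initial data $g_0|_X$, and the estimate $\alpha_0 g_0 \le g(t) \le \alpha_0^{-1} g_0$ on $X$ is immediate because $h_0 = g_0$ on $W \supset X$. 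Setting $\alpha(n) = \min(\alpha_0, \alpha_0 K^{-1}) \in (0,1)$ and undoing the rescaling yields the desired flow on $X \times [0, \alpha \rho^2]$.

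The principal technical obstacle, occupying most of the actual work, is the construction of $\sigma$ in the second paragraph with uniform bounds on $|\p \sigma|_{g_0}$ and $|\ddbar \sigma|_{g_0}$: the raw distance function to $W$ is only Lipschitz and has unbounded Hessian at the cut locus, and replacing it by a smooth function while keeping quantitative Lipschitz and $C^2$ control requires exploiting the uniform bounds on the curvature and injectivity radius of $g_0$ on $U$ through a Greene-Wu-style convolution in the local biholomorphic charts.
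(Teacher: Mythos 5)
Your proposal is correct and follows essentially the same route as the paper: rescale to $\rho=1$, conformally modify $g_0$ by $e^{2\mathfrak{F}(\sigma)}$ with a smoothed-distance cutoff $\sigma$ having $C^2$ bounds depending only on $n$ (the paper quotes exactly this from Hochard's Lemma 6.2), apply Theorem \ref{t-existence-2}(ii) to the resulting complete Hermitian metric of bounded geometry, and restrict the flow to $X$, where the conformal factor equals $1$ so K\"ahlerity and the equivalence estimate are inherited. The only cosmetic difference is that the paper works on the component of $\{\sigma<1\}$ containing $X$ instead of arranging $\sigma\to 1$ precisely at $\partial U$.
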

\begin{proof} By rescaling, we may assume $\rho=1$. By the proof of Lemma 6.2 in  \cite{Hochard2016}, there is a smooth function
$\sigma(x)\ge0$ on $U$ such that $\sigma(x)=0$ on $U_1$, $\sigma(x)\ge 1$ on $\p U$, $|\nabla \sigma|+|\nabla^2\sigma|\le c_1$. Here and below, lower case $c_i$ will denote a positive constant  depending only on $n$. Let $W=\{x\in U|\ \sigma(x)<1\}$. Then $X\subset U_1\subset W$. Let $\wt W$ be the component of $W$ containing $X$. Let $h_0=e^{2F}g_0$ be the Hermitian metric on $\wt W$ where $F(x)=\mathfrak{F}(\sigma(x))$ where $\mathfrak{F}$ is the function in Lemma \ref{l-exhaustion-1}. Then by  \cite{Hochard2016}, $h_0$ is complete and has bounded curvature. In fact by Lemma  \ref{l-exhaustion-1} and Lemma \ref{l-conformal-a} as in the proof of Lemma \ref{l-conformal-1}, the Chern curvature, the torsion $|T_0|_{h_0}$ and $|\bar\p T_0|_{h_0}$ of $h_0$ are uniformly bounded by a constant $c_2$. By Theorem \ref{t-existence-2}, the Chern-Ricci flow has a solution $h(t)$ on $\wt W\times[0,\a]$ for some $\a>0$ depending only on $n$ such that
$$
\a^{-1}h_0\le h(t)\le \a h_0
$$
on $\wt W\times[0,a]$. Let $g(t)=h(t)|_{X}$. Since $h_0=g_0$ on $X$ because $F=0$ there, it is easy that (i) and (ii) are true.
\end{proof}

\begin{lma}\label{l-curvestimate}
For any $n,v_0>0$, there exist  $\wt S(n,v_0)>0$, $C_0(n,v_0)>0$ depending only on $n, v_0$  such that the following holds: Suppose $(N^{n},g(t))$ is a \K Ricci flow for $t\in [0,S]$ and $x_0\in N$ such  that $B_t(x_0,r)\subset\subset N$ for each $t\in[0,S]$. Suppose
$$V_0(x_0,r^{2n})\geq v_0r^{2n}\quad\text{and}\;\;\mathrm{BK}(g(t))\geq -  r^{-2}\;\;\text{on}\;\;B_t(x_0,r),\;t\in [0,S]$$
Then for all $t\in (0,S]\cap (0,\wt S r^2]$,
$$
|\Rm(x,t)|\le \frac{C_0}{t}\ \ {\rm and}\ \  |\nabla Rm|(x,t)\leq \frac{C_0}{t^{3/2}} \quad\text{on}\;\;B_t(x_0, \frac r8).$$
Moreover the injectivity radius satisfies
$$\mathrm{inj}_{g(t)}(x)\geq (C_0^{-1}t)^\frac12 $$
for $x\in B_t(x_0, \frac r8)$ and $t\in[0,S]\cap[0,\wt S r^2]$.
\end{lma}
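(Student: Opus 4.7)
The plan is to proceed by a standard parabolic rescaling and point-picking argument, in the spirit of Perelman's pseudolocality and the local estimates of Simon--Topping and Hochard used in the previous section. First, by parabolic rescaling $\tilde g(t)=r^{-2}g(r^2t)$, it suffices to treat the case $r=1$. In this normalized setting the hypotheses become $V_0(x_0,1)\ge v_0$ and $\mathrm{BK}(g(t))\ge -1$ on $B_t(x_0,1)$ for $t\in[0,S]$, and one seeks $\wt S(n,v_0),C_0(n,v_0)$ so that $|\Rm|(x,t)\le C_0/t$ on $B_t(x_0,1/8)$ for $t\in(0,S]\cap(0,\wt S]$.

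\textbf{Propagation of noncollapsing and curvature bound.} Since the flow is K\"ahler, the hypothesis $\mathrm{BK}(g(t))\ge -1$ forces $\Ric(g(t))\ge -n\, g(t)$ on $B_t(x_0,1)$, which controls both the distortion of the metric and of volumes along the flow via $\partial_t\log\det g=-R$ together with Bishop--Gromov type comparisons. A direct consequence is that, for some $\wt S_1=\wt S_1(n,v_0)>0$ and every $t\in[0,\wt S_1]$, one has $B_t(x_0,1/2)\subset B_0(x_0,3/4)$ and $V_t(y,\delta)\ge c(n,v_0)\delta^{2n}$ for every $y\in B_t(x_0,1/4)$ and every $\delta\in(0,1/8]$. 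For the curvature bound itself the argument is by contradiction: assume no such $C_0,\wt S$ exist; then there is a sequence of K\"ahler Ricci flows $(N_k,g_k(t),x_k)$ satisfying the hypotheses with points $(y_k,t_k)$, $y_k\in B_{t_k}(x_k,1/8)$, $t_k\to 0$, and $t_k|\Rm|(y_k,t_k)\to\infty$. A Perelman-type point-picking lemma inside the evolving ball $B_t(x_k,1/4)$ produces nearby spacetime points $(y_k',t_k')$ with $Q_k:=|\Rm|(y_k',t_k')\to\infty$, $Q_kt_k'\to\infty$, and $|\Rm|\le 2Q_k$ on a parabolic neighborhood of size $\sim Q_k^{-1/2}$ in space and $\sim Q_k^{-1}$ in time.

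\textbf{Blow-up limit and derivative/injectivity estimates.} Parabolically rescaling $\tilde g_k(s)=Q_kg_k(t_k'+s/Q_k)$ yields a sequence of pointed K\"ahler Ricci flows on $[-Q_kt_k',0]$ with uniformly bounded curvature on a parabolic cylinder, $\mathrm{BK}(\tilde g_k)\ge -Q_k^{-1}\to 0$, and, by the noncollapsing propagated above, a uniform lower volume bound on unit balls about $y_k'$. Hamilton--Cheeger--Gromov compactness then extracts a complete ancient K\"ahler Ricci flow $(\tilde N_\infty,\tilde g_\infty(s))$ on $(-\infty,0]$ with $\mathrm{BK}\ge 0$, noncollapsed on all scales, and $|\tilde\Rm|(\tilde y_\infty,0)=1$. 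Such an ancient $\kappa$-solution is highly rigid (by work of Perelman and Ni--Cao on noncollapsed ancient solutions with $\mathrm{BK}\ge 0$); the contradiction is extracted either from a splitting/volume-cone argument at $s\to-\infty$ or, more directly, from Perelman's pseudolocality applied to a far-past time slice which is nearly Euclidean because of the unit-scale noncollapsing and vanishing curvature there. This yields the curvature bound $|\Rm|\le C_0/t$; the gradient estimate $|\nabla\Rm|\le C_0t^{-3/2}$ then follows from Shi's local derivative estimates, and the injectivity radius bound $\mathrm{inj}_{g(t)}(x)\ge(C_0^{-1}t)^{1/2}$ follows from the Cheeger--Gromov--Taylor estimate applied at scale $\sqrt t$, using the curvature bound together with the volume lower bound from the second step.

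\textbf{Main obstacle.} The subtle point is the propagation of volume noncollapsing: we have no a priori upper bound on curvature, only the one-sided bound $\mathrm{BK}\ge -r^{-2}$, and the balls $B_t(x_0,r)$ move with $t$. Thus volume comparison must be exploited carefully, together with the Kähler-induced relation between $\mathrm{BK}$ and $\Ric$, to guarantee that the rescaled limit is a nondegenerate ancient $\kappa$-solution rather than collapsing to a lower-dimensional object; once this is in hand, the rigidity of such solutions delivers the contradiction.
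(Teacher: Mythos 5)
Your route is genuinely different from the paper's, and as written it has a gap at exactly the point that makes the lemma nontrivial. The paper does not reprove the key curvature bound: after the parabolic rescaling to $r=1$ it quotes \cite[Lemma 4.1]{LeeTam2017} for $|\Rm|\le C_0/t$ on $B_t(x_0,\frac r2)$, then gets the injectivity radius bound from Cheeger--Gromov--Taylor \cite{CheegerGromovTaylor} together with volume comparison and the lower volume control lemma \cite[Lemma 2.1]{SimonTopping2016} (cf. \cite{Simon2012}), and the bound on $|\nabla\Rm|$ from Shi's local estimates \cite{Shi1989b}. Your treatment of the derivative and injectivity radius estimates agrees with this, but you attempt to establish the $C_0/t$ bound itself by point-picking and blow-up to an ancient solution.

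The concrete gap is your ``propagation of noncollapsing'' step. You assert that $\mathrm{BK}(g(t))\ge -1$ (hence $\Ric\ge -c(n)g$ on the evolving ball) yields, as a ``direct consequence,'' a uniform lower bound $V_t(y,\delta)\ge c(n,v_0)\delta^{2n}$ for later times $t$. A lower Ricci bound alone gives no such control: $\partial_t\,dV=-R\,dV$ only bounds the volume element when the scalar curvature is bounded \emph{above}, and the known lower volume control results (e.g.\ \cite[Lemma 2.1]{SimonTopping2016}, \cite{Simon2012}) require precisely a curvature bound of the form $|\Rm|\le c_0/t$ --- the very estimate you are trying to prove. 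Without this, your blow-up argument is also blocked: at the picked points you have no injectivity radius lower bound at scale $Q_k^{-1/2}$, so Hamilton--Cheeger--Gromov compactness does not apply, and you cannot conclude the limit is $\kappa$-noncollapsed on all scales. This circularity is why the arguments of Simon--Topping, Hochard and \cite{LeeTam2017} are structured as an induction/continuity scheme in which curvature bounds and noncollapsing are propagated together, rather than as a single blow-up. Finally, the endgame is too vague to stand on its own: ``pseudolocality applied to a far-past time slice'' is not available for an incomplete local solution without boundary control, and the rigidity you need is the specific theorem of Ni that a non-flat ancient K\"ahler--Ricci flow with bounded nonnegative bisectional curvature cannot be $\kappa$-noncollapsed on all scales; invoking generic ``rigidity of $\kappa$-solutions'' does not close the argument. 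If you fix the noncollapsing propagation (or simply invoke \cite[Lemma 4.1]{LeeTam2017} as the paper does), the remaining steps (Shi, Cheeger--Gromov--Taylor) are fine.
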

\begin{proof} By parabolic rescaling, we may assume that $r=1$. By \cite[Lemma 4.1]{LeeTam2017}, there exist  $\wt S(n,v_0)>0$, $C_0(n,v_0)>0$ such that if $(M,g(t))$ is as in the lemma, then
$$
|\Rm(x,t)|\le \frac{C_0}t
$$
for $x\in B_t(x_0,\frac r2)$ and for $t\in [0,S]\cap [0, \wt S]$. The fact that
$$
\mathrm{inj}_{g(t)}(x)\ge (C_0^{-1} t)^{\frac12}
$$
for $x\in B_t(x_0,\frac 12)$ for $t\in  \in (0,S]\cap (0,\wt S]$ can be proved as in \cite[Lemma 4.1]{SimonTopping2017} using the result of Cheeger-Gromov-Taylor \cite{CheegerGromovTaylor},   volume comparison, and lower bound volume control lemma \cite[Lemma 2.1]{SimonTopping2016} (see also \cite{Simon2012}).

The estimate of $|\nabla\Rm|$ is a consequence of the local estimates by Shi \cite{Shi1989b}  (see \cite{CaoZhuChen2008}).

\end{proof}

The following local estimates are by Sherman-Weinkowve \cite{ShermanWeinkove2012} and Lott-Zhang \cite{LottZhang2013}. The following is from \cite[Propositon A.1]{LottZhang2013}.
\begin{lma}\label{l-curv1}
For any $A_1,n>0$, there exists $C_1(n,A_1)$ depending only on $n, A_1$ such that the following holds: For any \K manifold $(N^n,g_0)$ (not necessarily complete), suppose $g(t), t\in [0,S]$ is a solution of \K Ricci flow on $B_{0}(x_0,r)$ where $B_0(x_0,r)\subset N$ such that on $B_0(x_0,r)$
$$|Rm_{g(0)}|\leq A_1r^{-2} \quad\text{and}\quad |\nabla_{g_0} \Rm(g_0)|\leq A_1r^{-3}. $$
Assume in addition that on $B_{0}(x_0,r)$,
$$A_1^{-1}g_0\leq g(t)\leq A_1 g_0$$
Then on $B_0(x_0,\frac r8)$, $t\in [0,S]$,
$$|\Rm|(g(t))\leq C_1r^{-2}.$$

\end{lma}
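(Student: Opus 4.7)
By parabolic rescaling $g(t)\mapsto r^{-2}g(r^2 t)$ we may assume $r=1$. The plan is to adapt the Sherman-Weinkove--Lott-Zhang local $C^2$ estimate, using $g_0$ as a fixed background. The key quantity is the Calabi-type norm
\[
S(x,t):=g(t)^{i\bar j}\,g(t)_{k\bar l}\,g(t)^{p\bar q}\,\Psi^{k}_{ip}\,\ol{\Psi^{l}_{jq}},\qquad \Psi:=\Gamma(g(t))-\Gamma(g_0),
\]
which under the hypothesis $A_1^{-1}g_0\le g(t)\le A_1 g_0$ is comparable to $|\hat\nabla g(t)|^2_{g_0}$, where $\hat\nabla=\nabla^{g_0}$, and which vanishes at $t=0$.

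The first step is a direct Calabi-type computation under the \KR flow giving a heat inequality
\[
\heat S\;\le\;-c(n)\lf(|\hat\nabla\Psi|^2_{g(t)}+|\ol{\hat\nabla}\Psi|^2_{g(t)}\ri)+C(n,A_1)(S+1),
\]
where the constant $C(n,A_1)$ absorbs the pointwise bounds on $\Rm(g_0)$ and $\hat\nabla\Rm(g_0)$ that enter through the commutator $[\p_t,\hat\nabla]$ and through the difference of connections. I would then fix a $g_0$-cutoff $\eta\ge0$ equal to one on $B_0(x_0,1/4)$, supported in $B_0(x_0,1/2)$, with $|\hat\nabla\eta|_{g_0}+|\hat\nabla^2\eta|_{g_0}\le C$, and apply the maximum principle to $\eta^2 S$ on $B_0(x_0,1/2)\times[0,S]$. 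The uniform equivalence converts $g_0$-bounds on $\eta$ to $g(t)$-bounds, and absorbing the cross term $(\hat\nabla\eta^2)\cdot\hat\nabla S$ into the good negative term via Cauchy-Schwarz yields $S\le C_2(n,A_1)$ on $B_0(x_0,1/4)\times[0,S]$.

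To convert this $C^1$-control into the curvature bound, recall the identity
\[
R_{i\bar j k\bar l}(g(t))=R_{i\bar j k\bar l}(g_0)-g(t)_{s\bar l}\,\hat\nabla_{\bar j}\Psi^{s}_{ik}+\Psi\ast\Psi,
\]
so $|\Rm(g(t))|$ is controlled by $A_1$, $S$ and $|\hat\nabla\Psi|$. I would repeat the Calabi trick for the companion quantity $W:=|\hat\nabla\Psi|^2_{g(t)}$: using the just-obtained $L^\infty$-bound on $S$, derive $\heat W\le -c|\hat\nabla^2\Psi|^2+C(n,A_1)(W+1)$, and then apply the maximum principle to $\eta_1^2 W$ with a slightly smaller $g_0$-cutoff $\eta_1$ supported in $B_0(x_0,1/4)$ and equal to one on $B_0(x_0,1/8)$. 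This bounds $W$, and hence via the identity above $|\Rm|(g(t))\le C_1(n,A_1)$, on $B_0(x_0,1/8)\times[0,S]$.

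The main obstacle is bookkeeping in the two Calabi-type heat inequalities: the commutator $[\p_t,\hat\nabla]$ together with the difference $\Psi$ of connections generates many cross-terms involving the background curvature and its first derivative, and at the second step terms cubic and quartic in $\Psi$ and $\hat\nabla\Psi$; arranging that the good negative term always dominates, with constants depending only on $n$ and $A_1$, requires a careful use of Cauchy-Schwarz. Crucially, the hypothesis $A_1^{-1}g_0\le g(t)\le A_1 g_0$ is imposed on the \emph{initial} ball $B_0(x_0,1)$, which is exactly what allows a single $g_0$-cutoff to be used throughout $[0,S]$ without having to track the evolution of geodesic balls under $g(t)$.
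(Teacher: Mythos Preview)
Your proposal is essentially correct and follows exactly the Sherman--Weinkove/Lott--Zhang scheme that the paper invokes; the paper itself gives no proof of this lemma, merely citing \cite[Proposition~A.1]{LottZhang2013}, so your sketch is in fact more than the paper provides.

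Two minor remarks on the bookkeeping. First, in the K\"ahler case the identity for the curvature difference is cleaner than you wrote: since $\hat\nabla_{\bar j}$ acts as $\partial_{\bar j}$ on tensors with only unbarred indices, one has $R_{i\bar j k}{}^{l}(g)-R_{i\bar j k}{}^{l}(g_0)=-\hat\nabla_{\bar j}\Psi^{l}_{ik}$ with no $\Psi\ast\Psi$ term; the extra terms only appear when you lower the index with $g(t)$ versus $g_0$, and are harmless under uniform equivalence. Second, the cited Lott--Zhang argument organizes the second step slightly differently: after bounding $S$ they compute the evolution of $|\Rm(g(t))|^2$ directly and couple it with $S$ (or with $\tr_{g_0}g$) in a single barrier, rather than passing through $W=|\hat\nabla\Psi|^2$. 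Your route via $W$ is equivalent and perfectly acceptable; the needed bound on $\hat\nabla\Rm(g_0)$ enters in either version precisely at this second step.
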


\begin{lma}\label{l-lowerbound-BK-1}
Suppose $(N^n,g(t))$ is a \K Ricci flow on $[0,S]$ with $g(0)=g_0$. Let $x_0\in N$, and $r$ be such that $B_t(x_0,r)\subset\subset N$ for all $t\in [0,S]$. Suppose there exists $a>0$
\begin{enumerate}
\item [(i)] $\mathrm{BK}(g_0)\geq 0$ on $B_0(x_0,r)$; and
\item [(ii)] $\displaystyle |\Rm|(g(t))\leq \frac{a}{t}$ on $B_t(x_0,r)$, $t\in (0,S]$.
\end{enumerate}
There exists  $\hat S>0$ depending only on $n, a$ such that for all $t\in [0,S]\cap [0,\hat Sr^2]$, $x\in B_t(x_0,\frac r8)$,
$$\mathrm{BK}(x,t) \geq- r^{-2}.$$
\end{lma}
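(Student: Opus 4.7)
The plan is to combine Mok's computation of the reaction term in the evolution of bisectional curvature under \KR flow with a space-time cutoff barrier, thereby localizing the Bando--Mok preservation of nonnegative bisectional curvature to a ball. First I parabolically rescale by $\tilde g(t) = r^{-2} g(r^2 t)$ to reduce to the case $r = 1$: the hypotheses become $\mathrm{BK}(g_0) \ge 0$ on $B_0(x_0, 1)$ and $|\Rm|_{g(t)} \le a/t$ on $B_t(x_0, 1)$, and the desired conclusion becomes $\mathrm{BK}(x, t) \ge -1$ on $B_t(x_0, 1/8)$ for $t \in [0, S] \cap [0, \hat S]$ with $\hat S = \hat S(n, a)$.

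Second, I construct a smooth nonnegative space-time function $\phi$ on the cylinder $\{(x, t) : x \in B_t(x_0, 1/2),\ 0 \le t \le \hat S\}$ satisfying (a) $\phi \equiv 0$ on $\overline{B_t(x_0, 1/4)}$; (b) $\phi \ge 1$ for $x \in B_t(x_0, 1/2) \setminus B_t(x_0, 1/3)$; and (c) $\heat \phi \ge -C_1(n, a)$ in the barrier sense. The natural candidate is $\phi = \eta(d_t(x_0, \cdot))$ for a smooth nondecreasing convex $\eta$ supported in $[1/4, 1/2)$; property (c) follows from Perelman-type distance distortion estimates together with the Shi-type gradient bound $|\nabla \Rm| \le C_0 t^{-3/2}$ supplied by Lemma \ref{l-curvestimate}, applied to control $\heat d_t(x_0, \cdot)$.

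Third, let
$$K_{\min}(x, t) = \min\{ R_{v \bar v w \bar w}(x, t) : v, w \in T^{1,0}_x M,\ |v|_{g(t)} = |w|_{g(t)} = 1 \}$$
denote the minimum bisectional curvature. For constants $A = A(n, a)$, $M = M(n, a)$ large and auxiliary $\epsilon > 0$ small, I apply Lemma \ref{l-max} to $F = K_{\min} + A \phi + \epsilon e^{M t}$. Since $F > 0$ at $t = 0$, if $F$ first vanishes at some $(x_1, t_0)$ with $t_0 > 0$, then the bound $|K_{\min}| \le n a/t$ together with (b) force $x_1$ to lie in the interior of $B_{t_0}(x_0, 1/3)$ once $A$ is sufficiently large. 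At $(x_1, t_0)$, choose unit $(1,0)$-vectors $v, w$ realizing $K_{\min}$ and extend them smoothly in a neighborhood; Mok's computation for the $R * R$ reaction term then gives
$$\heat R_{v \bar v w \bar w} \ge -C(n) K_{\min}^2$$
because the reaction polynomial is nonnegative on the invariant cone $\{R_{i \bar i j \bar j} \ge K_{\min}\}$. Combining with (c),
$$0 \ge \heat F \ge -C(n) K_{\min}^2 - A C_1 + \epsilon M e^{M t_0},$$
and using $|K_{\min}| \le n a/t$ with $M$ large and $\hat S$ small makes the right side strictly positive, a contradiction. Letting $\epsilon \to 0$ yields $K_{\min} \ge -A \phi$; by (a) this produces $K_{\min} \ge 0$ on $B_t(x_0, 1/4)$, stronger than the asserted bound $\ge -1$ on $B_t(x_0, 1/8)$.

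The hard part will be the non-smoothness of $K_{\min}$: as a pointwise infimum over unit vectors, it is only Lipschitz, so strictly speaking Lemma \ref{l-max} does not apply to $F$ directly. I would handle this by the standard trick of replacing $K_{\min}$ near the contact point by the smooth scalar $R_{v \bar v w \bar w}$ for a fixed smooth unitary extension of the minimizing pair $(v, w)$ at $(x_1, t_0)$; a contact of $F$ with zero at $(x_1, t_0)$ implies the same for $R_{v \bar v w \bar w} + A \phi + \epsilon e^{M t}$, to which Lemma \ref{l-max} applies in the usual manner. A secondary technical point is the verification that Mok's nullvector argument yields the stated reaction-term nonnegativity when $(v, w)$ is a genuine minimizer rather than a formal nullvector in the tensor maximum principle; this is exactly the content of Mok's original argument and requires only that all $R_{i \bar i j \bar j} \ge K_{\min}$ at the contact point, which is automatic.
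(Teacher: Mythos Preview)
Your general strategy --- rescale, build a Perelman-type cutoff, invoke Mok's null-vector computation for the reaction term, and run a local maximum principle on $K_{\min}$ plus a barrier --- is the correct framework, and is presumably what the cited Theorem~3.2 of \cite{LeeTam2017} carries out (the paper's own proof here is just a pointer to that reference together with the rescaling remark). But the execution has genuine gaps. First, the contradiction does not close: at the contact point you obtain $0 \ge -C(n)K_{\min}^2 - AC_1 + \epsilon M e^{Mt_0}$, yet $\epsilon\to 0$ while the other two terms are fixed negative, so nothing is contradicted; relatedly, a \emph{fixed} $A$ cannot force $x_1\in B_{t_0}(x_0,\tfrac{1}{3})$ because the competing bound $|K_{\min}|\le na/t_0$ blows up as $t_0\to 0$. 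Second, the reaction bound $\heat R_{v\bar v w\bar w}\ge -C(n)K_{\min}^2$ is not what the shifted Bando--Mok argument yields: shifting the curvature operator by $K_{\min}$ produces a cross term of order $|K_{\min}|\cdot|\Rm|\sim |K_{\min}|\cdot a/t$, which dominates the quadratic term whenever $|K_{\min}|\ll a/t$ and must be kept. Third, under $|\Rm|\le a/t$ Perelman's distance-distortion estimate gives only $\heat d_t\ge -C\sqrt{a/t}$, so a time-uniform bound $\heat\phi\ge -C_1$ is not available from $\eta(d_t)$; a $t^{-1/2}$ factor has to be absorbed into the barrier. Your appeal to Lemma~\ref{l-curvestimate} for $|\nabla\Rm|$ is in any case circular (that lemma already assumes a $\mathrm{BK}$ lower bound along the flow, which is what you are proving, and also a volume hypothesis absent here), though fortunately Perelman's estimate needs only Ricci bounds. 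Finally, Lemma~\ref{l-max} is a global principle on complete noncompact manifolds; in this local setting the cutoff is supposed to replace it, not be combined with it.

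These defects show up in the conclusion as well: you claim $K_{\min}\ge 0$ on $B_t(x_0,\tfrac14)$, but local preservation of nonnegative bisectional curvature is false in general --- negative curvature can propagate in from outside the ball --- which is precisely why the lemma only asserts $K_{\min}\ge -r^{-2}$ for $t\le \hat S r^2$. A barrier that does work has the shape $K_{\min}+\ell(t)+A\phi$ with $\ell(0)=0$ and $\ell'(t)$ large enough at the contact point to dominate both the $|K_{\min}|\cdot a/t$ reaction error and the $t^{-1/2}$ cutoff error (using that there $|K_{\min}|\le \ell(t)+A\sup\phi$ once you have forced the contact into the interior), together with $\ell(t)\le 1$ for $t\le\hat S(n,a)$; this delivers $K_{\min}\ge -1$ on the inner ball, not $K_{\min}\ge 0$.
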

\begin{proof} This follows from Theorem 3.2 in \cite{LeeTam2017} by rescaling. Note that in \cite{LeeTam2017}, it is assumed that $g(t)$ is completed. However the proof also works for our case.

\end{proof}

We also need following the shrinking balls lemma in \cite{SimonTopping2016}.

\begin{lma}\label{l-balls}
 There exists a constant $\beta=\beta(m)\geq 1$ depending only on $m$ such that the following is true. Suppose $(N^m,g(t))$ is a Ricci flow for $t\in [0,S]$ and $x_0\in N$   with $B_0(x_0,r)\subset\subset M$ for some $r>0$, and $\Ric(g(t))\leq (n-1)a/t$ on $B_0(x_0,r)$ for each $t\in (0,S]$. Then
$$B_t\left(x_0,r-\beta\sqrt{a t}\right)\subset B_0(x_0,r).$$
\end{lma}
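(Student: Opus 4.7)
The plan is to reduce the lemma to the classical Hamilton--Perelman distance distortion estimate via a continuity argument in $t$, following the template of \cite{SimonTopping2016}. Recall the Hamilton--Perelman estimate: at a time $t$ where $d_t(x_0,y)$ is differentiable in the barrier sense, if $\Ric(g(t)) \le (m-1)K$ on $B_t(x_0,r_0)\cup B_t(y,r_0)$ for some $r_0>0$ with $d_t(x_0,y)\ge 2r_0$, then
\[
\frac{d^{-}}{dt}\,d_t(x_0,y)\ \ge\ -2(m-1)\Bigl(\tfrac{2}{3}Kr_0+r_0^{-1}\Bigr).
\]
With $K=a/t$ and the optimal choice $r_0=\sqrt{t/a}$, both terms on the right are of order $\sqrt{a/t}$ and we obtain the scale-invariant inequality
\[
\frac{d^{-}}{dt}\,d_t(x_0,y)\ \ge\ -C_1(m)\sqrt{a/t},
\]
valid whenever the Ricci bound holds on the two balls of radius $r_0$ around $x_0$ and $y$ at time $t$.

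Set $\phi(t):=r-\beta\sqrt{at}$ (if $\phi(t)\le 0$ the conclusion is vacuous), and define
\[
T^{*}\ :=\ \sup\bigl\{\tilde t\in[0,S]\ :\ B_s(x_0,\phi(s))\subset B_0(x_0,r)\text{ for all }s\in[0,\tilde t]\bigr\}.
\]
At $t=0$ the inclusion holds trivially, and by continuity of $g(t)$ in $t$ we have $T^{*}>0$. The aim is to show $T^{*}=S$, which is precisely the desired inclusion. Suppose for contradiction $T^{*}<S$.

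Pick any $y\in \overline{B_{T^{*}}(x_0,\phi(T^{*}))}$. For each $s\in(0,T^{*})$ consider a minimizing $g(s)$-geodesic $\gamma_s$ from $x_0$ to $y$; it lies in the closed metric ball of radius $d_s(x_0,y)\le\phi(s)$, and hence, by the definition of $T^{*}$, it stays inside $B_0(x_0,r)$. In particular the Ricci upper bound $\Ric(g(s))\le (m-1)a/s$ is available along $\gamma_s$, and a parallel argument puts the $g(s)$-balls of radius $r_0=\sqrt{s/a}$ around the endpoints inside $B_0(x_0,r)$ for $s$ close enough to $T^{*}$. Apply the distance distortion estimate and integrate from $s_0>0$ to $T^{*}$, using that $d_s(x_0,y)$ is absolutely continuous on $(s_0,T^{*})$ via a standard barrier/liminf argument:
\[
d_{s_0}(x_0,y)-d_{T^{*}}(x_0,y)\ \le\ 2C_1(m)\bigl(\sqrt{aT^{*}}-\sqrt{as_0}\bigr).
\]
Sending $s_0\to 0^{+}$ (the small-time regime, where $d_s(x_0,y)<2r_0$, is easily handled separately since it forces $d_s<2\sqrt{s/a}\to 0$), we obtain $d_0(x_0,y)\le d_{T^{*}}(x_0,y)+2C_1(m)\sqrt{aT^{*}}$. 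Choosing $\beta:=2C_1(m)+1$ then gives $d_0(x_0,y)\le \phi(T^{*})+2C_1(m)\sqrt{aT^{*}}<r$ whenever $d_{T^{*}}(x_0,y)<\phi(T^{*})$, so $B_{T^{*}}(x_0,\phi(T^{*}))\subset B_0(x_0,r)$; by continuity the inclusion persists for $t$ slightly larger than $T^{*}$, contradicting maximality.

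The main obstacle is the self-referential book-keeping: Perelman's distance distortion is only applicable where the Ricci bound holds along the time-$s$ minimizing geodesic, but that very fact is exactly what the continuity argument is trying to establish. The resolution is the shrinking-radius $\phi(s)=r-\beta\sqrt{as}$, which is tailored precisely so that the distortion incurred between times $s_0$ and $T^{*}$ matches the amount by which $\phi$ contracts, allowing the bootstrap to close. Secondary technical points --- the barrier formulation of $\frac{d^{-}}{dt}d_t$, the small-$s$ regime where the Perelman hypothesis $d_s\ge 2r_0$ fails, and the mild issue that $y$ may only lie in the closure of the ball --- are all standard and do not affect the structure of the argument.
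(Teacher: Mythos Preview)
The paper does not give its own proof of this lemma; it simply quotes the result from Simon--Topping \cite{SimonTopping2016}. Your argument---a continuity/bootstrap in $t$ combined with the Hamilton--Perelman distance-distortion estimate with the scale-matched choice $r_0=\sqrt{t/a}$---is exactly the Simon--Topping proof, and your sketch is correct in outline and in the key computations (optimizing $r_0$, integrating $C_1(m)\sqrt{a/t}$, and choosing $\beta$ to close the bootstrap).

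One minor point worth tightening if you write this up in full: you note that the Perelman estimate requires the Ricci bound on the $g(s)$-balls $B_s(x_0,r_0)\cup B_s(y,r_0)$, not merely along the minimizing geodesic, and you assert that ``a parallel argument'' places these balls inside $B_0(x_0,r)$. This is the one spot where the self-referential bookkeeping actually bites: a priori the inductive hypothesis only gives $B_s(x_0,\phi(s))\subset B_0(x_0,r)$, while you need $B_s(x_0,\phi(s)+r_0)\subset B_0(x_0,r)$. The fix is exactly the one Simon--Topping use: the extra margin $r_0=\sqrt{s/a}$ is itself of order $\sqrt{s}$ and is absorbed by taking $\beta$ slightly larger (your ``$+1$'' in $\beta=2C_1(m)+1$ in fact does this once $a\ge 1$, and a rescaling handles general $a$). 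You clearly understand this---you flag it as the ``main obstacle''---but the sentence as written underspecifies the resolution.
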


We are ready to prove the theorem.

\begin{proof}[Proof of Theorem \ref{t-KRF-existence-1}]

Suppose (i) is true. Then $g(t)$ must have nonnegative bisectional curvature by \cite{LeeTam2017} (see also \cite{HuangTam2015}). Hence (ii) is true. The first part of (iii) follows from (i) and lower bound control lemma \cite[Corollary 6.2]{Simon2012}. If $g_0$ has maximal volume growth, then one may prove similarly by using  parabolic rescaling as in    \cite{HuangTam2015} for example, to conclude that   that $g(t)$ also has maximal volume growth.

To prove (i), let us define some constants:
\begin{itemize}
  \item Let $\a(n)>0$ be the constant in Lemma \ref{l-extension-1}.
  \item Let $C_0(n,v_0)$, $\wt S(n,v_0)$ be the constants in Lemma \ref{l-curvestimate}.
  \item Let $\mu=\mu(n,v_0)=(1+\a C_0^{-1})^\frac12-1>0$.
  \item Let $a=a(n,v_0)=(1+\mu)^2 C_1C_0$ where $C_1=C_1(n,v_0)$ is the constant in Lemma \ref{l-curv1} with $A_1=\max\{C_0,\a^{-1}\}$.
  \item Let $\hat S=\hat S(n,a)=\hat S(n,v_0)$ be the constant in Lemma \ref{l-lowerbound-BK-1}. We may assume that $\wt S\le\hat S  $ by choosing a smaller $\wt S$.
  \item Let $\e=\max\{  32\wt S^{-\frac12}, 4\beta a^\frac12, 2\beta a^\frac12+4C_0^\frac12\}+1$
   .
\end{itemize}

 Fix $p\in M$, for any $R>>1$ we want to construct \KR flow on $B_0(p,R-1)\times[0,S]$ for some $S>0$ depending only on $n,v_0$ such that
$$
|\Rm(g(t))|\le \frac at
$$
on $B_0(p,R-1)\times(0,S]$. Suppose this can be done for all $R>>1$. Denote the corresponding solution by $g_R(t)$. By \cite{Chen2009,Simon2008},  for any domain $\Omega\subset\subset M$   the curvature of $g_R(t)$ are uniformly bounded on $\Omega\times [0,S]$ for $R$ large enough. Hence by the local estimates for higher order derivatives of the curvature tensors of Ricci flow \cite{CaoZhuChen2008,Shi1989a}, we conclude that passing to a subsequence if necessary, $g_R(t)$ converges to a solution $g(t)$ of the \KR flow on $M\times[0,S]$ so that $|\Rm(g(t))|\le \frac at$ on $M\times(0,S]$. The fact that $g(t)$ is complete follows immediately from Lemma \ref{l-balls}.

   Hence it remains to construct \KR flow on $B_0(p,R-1)\times[0,S]$ as mentioned above. Let
    \be\label{e-A}
    A=\min\left\{\frac12\displaystyle{\lf(\frac{\mu}{\e(1+\mu)}\ri)^2}, (32\e^{-1})^2\right\}.
    \ee
    $A$ depends only on $n, v_0$.

     Choose $\rho>0$ small enough so that $|\Rm(g_0)|\le \rho^{-2}$ on $B_0(R+1)$, $\mathrm{inj}_0(x)\ge \rho$ for $x\in B_0(p,R+1)$ and  $B_0(x,\rho)\subset B_0(p,R+2)$ for $x\in B_0(p,R+1)$. Here  $\mathrm{inj}_0(x)$ is the injectivity radius with respect to $g_0$. Apply Lemma \ref{l-extension-1} to $(M,g_0)$ with $U= B_0(p,R+1)$, there is a solution $g(t)$ of the \KR flow with $g(0)=g_0$ in $B_0(p,R)\times[0,t_1]$. We can choose $t_1$ small enough, so that  $t_1\le A$ and
\be\label{e-krf-1}
|\Rm(x,t)|\le \frac at
\ee
for $x\in B_0(p,R)$ and for $t\in (0,t_1]$.

Define $ t_k$, $r_k$   inductively. Let $r_1=0$ and
\be\label{e-tr-def}
t_{k+1}=t_k(1+\mu)^2=t_1(1+\mu)^{2(k-1)};\ \ {\rm and}\ \ r_{k+1} =r_k+ \e t_k^\frac12=\e\sum_{i=1}^kt_i^\frac12.
\ee
It is easy to see that $t_k\to\infty, r_k\to\infty$ as $k\to\infty$.\vskip .2cm

Consider the statement $P(k)$ defined below.\vskip .2cm

{\sl $P(k)$:  There is a solution of the \KR flow $g(t)$ on $B_0(p, R-r_k)\times[0,t_k]$ with $g(0)=g_0$ such that
$$
|\Rm(g(t))|\le \displaystyle{\frac at}
$$
on $B_0(p, R-r_k)\times[0,t_k]$ with $t_k\le A$, $r_k\le 1$.}

\vskip .2cm

From the above, we see that $P(1)$ is true. Let $k$ be the largest integer such that $P(k)$ is true, $t_k\le A$ and $r_k\le 1$.  Then there are three possibilities:

\underline{Case 1}:   $r_{k+1}\ge1$. Then
\bee
\begin{split}
1\le& \e\sum_{i=1}^k t_i^\frac12\\
=& \e t_k^\frac12\sum_{i=1}^k(1+\mu)^{1-i}
\end{split}
\eee
and
$$
A\ge t_k\ge \lf(\frac{\mu}{ \e(1+\mu)}\ri)^2\ge2A
$$
by the definition of $A$. This is impossible.

\underline{Case 2}: $r_{k+1}< 1$ and $t_{k+1}\ge A$.  In this case,
\bee
\begin{split}
A\le t_{k+1}=t_k(1+\mu)^2
\end{split}
\eee
and so $t_k\ge (1+\mu)^{-2}A=:\sigma_1$. Since $P(k)$ is true, there is a solution of the \KR flow on $B_0(p, R-1)\times[0,\sigma_1]$ with $|\Rm(g(t))|\le a/t$ because $r_k\le 1$. Note that $\sigma_1$ depends only on $n, v_0$.

\underline{Case 3}: $r_{k+1}<1$ and $t_{k+1}<A$. We want to prove that this is also impossible.

 Let $g(t)$ be the solution of the \KR flow in $P(k)$. In this case, if $x\in B_0(p,R-r_{k+1}+\frac{1}{2}\epsilon t_k^{1/2})$, then $B_0(x, \frac{1}{2}\e t_k^\frac12)\subset B_0(p,R-r_k)$. Since $P(k)$ is true, by Lemma \ref{l-balls},  applied to $N=B_0(p,R-r_k)$ and $g(t)$, $r=\frac12\e t_k^\frac12$,    for $t\in [0,t_k]$
\bee
B_t(x,\frac14\e t_k^\frac12)\subset B_t(x, \frac12\e t_k^\frac 12-\beta a^\frac12t^\frac12)\subset B_0(x,    \frac12  \e t_k^\frac12)\subset \subset B_0(R-r_k)
\eee
because $\e\ge 4\beta a^\frac12$. By Lemma \ref{l-lowerbound-BK-1}, with   $r=\frac14 \e t_k^\frac12$, we have
\be
\mathrm{BK}(g(t))\ge -(\frac14\e t_k^\frac12)^{-2}\ge -(\frac1{32}\e t_k^\frac12)^{-2}
\ee
 on $B_t(x, \frac1{32}\e t_k^\frac12)$ for all $t\in [0,t_k]$ because $t_k\le \wt S(\frac1{32}\e)^2t_k\le \hat S(\frac1{32}\e)^2t_k \le \hat S(\frac1{4}\e)^2t_k $ by the choice of $\e$. Since $t_k\le A$ we have $\frac1{32}\e t_k^\frac12\le 1$ by \eqref{e-A},  and so $V_0(x,r)\ge v_0 r^{2n}$ with $r= \frac1{32}\e t_k^\frac12$ by volume comparison and the fact that $V_0(x,1)\ge v_0$. By the choice of $\e$, we have $t_k\le \wt Sr^2$. We can apply  Lemma \ref{l-curvestimate} to conclude that
$$
|\Rm(x,t_k)|\le \frac{C_0}{t_k}\ \ {\rm and}\ \  |\nabla Rm|(x,t_k)\leq \frac{C_0}{t_k^{3/2}},$$
and the injectivity radius satisfies
$$\mathrm{inj}_{g(t)}(x)\geq (C_0^{-1}t_k)^\frac12. $$

This is true for all $x\in B_0(p, R-r_{k+1}+\frac12\epsilon t_k^{\frac12})$. By Lemma \ref{l-extension-1},   applied to $U= B_0(p, R-r_{k+1}+\frac12\epsilon t_k^{\frac12})$, initial metric is $g(t_k)$, and $\rho^2=C_0^{-1}t_k$, there is a solution $h(t)$ of the \KR flow on $[t_k, t_k+\a C_0^{-1}t_k]=[t_k,t_{k+1}]$ with $h(t_k)=g(t_k)$ and
$$
\a g(t_k)\le h(t) \le \a^{-1} g(t_k)
$$
on the component $X$  of $p$ in the open set  $$\left\{x|\   B_{t_k}(x, C_0^{-\frac12}t_k^\frac12)\subset\subset   B_0(p, R-r_{k+1}+\frac12\epsilon t_k^\frac12)\right\}.$$

We claim that for $x\in B_0(p,R-r_{k+1})$, $B_{t_k}(x,C_0^{-\frac12}t_k^\frac12)\subset X$. If the claim is true, by Lemma \ref{l-curv1} for all $t\in [t_k,t_{k+1}]$,
$$|Rm(h(t))|(x)\leq \frac{C_0C_1}{t_k}=\frac{(1+\mu)^2C_0C_1}{t_{k+1}}=\frac a{t_{k+1}}\leq \frac{a}{t}.$$

Then if we extend $g(t)$ to be $h(t)$ for $t\in [t_k,t_{k+1}]$,  we see that $P(k+1)$ is true. This contradicts the maximality of $k$ so that $P(k)$ is true. This implies that \underline{Case 3} cannot happen.

To prove the claim, let $x\in B_0(p,R-r_{k+1})$, $y\in B_{t_k}(x,C_0^{-\frac12}t_k^\frac12)$. Since $P(k)$ is true,   by Lemma \ref{l-balls} we have
\begin{align*}
B_{t_k}(y,C_0^{-\frac12}t_k^\frac12)
&\subset B_{t_k}(x,2C_0^{-\frac12}t_k^\frac12)\\
&\subset B_{0}\big (x, (2C_0^{-\frac12}+\beta a^\frac12)t_k^\frac12\big)\\
&\subset B_0\left(p,R-r_{k+1}+(2C_0^{-\frac12}+\beta a^\frac12)t_k^\frac12 \right)\\
&\subset\subset B_0(p,R-r_{k+1}+\frac12 \epsilon t_k^\frac12)
\end{align*}
because $\e>4C_0^{-\frac12}+2\beta a^\frac12$.

Hence if we let $S=A(1+\mu)^{-2}$, we can construct a solution of the \KR flow on $B_0(p,R-1)\times[0,S]$ with $g(0)=g_0$ and $|\Rm(g(t))|\le a/t$. This completes the proof of the theorem.

\end{proof}

\appendix
\section{Chern connection}\label{chern connection appendix}
In this section, we collect some useful formulas for the Chern connection. Those materials can be found in \cite{TosattiWeinkove2015,ShermanWeinkove2013}. Let $(M,g)$ be a Hermitian manifold. The {\it Chern connection} of $g$ is defined as follows: In local holomorphic coordinates $z^i$,
 for a vector field $X_i\p_i$, where $\p_i:=\frac{\p}{\p z^i}$, $\p_{\bar i}=\frac{\p}{\p \bar z^i}$,
 $$\nabla_iX^k=\p_{i}X^k+\Gamma_{ij}^kX^j;\ \nabla_{\bar i}X^k=\p_{\bar i}X^k.
 $$
 For a $(1,0)$ form $a=a_idz^i$,
 $$
 \nabla_ia_j=\p_i a_j-\Gamma_{ij}^ka_k; \ \nabla_{\bar i}a_j=\p_{\bar i}a_j.
 $$
 Here $\nabla_i:=\nabla_{\p_i}$, etc. $\Gamma$ are the coefficients of $\nabla$,
with
$$\Gamma_{ij}^k=g^{k\bar l}\partial_i g_{j\bar l}.$$
Noted that Chern connection is a connection such that $\nabla g=\nabla J=0$.   The {\it torsion} of $g$ is defined to be
$$T_{ij}^k=\Gamma_{ij}^k-\Gamma_{ji}^k.$$
We remark that $g$ is \K if and only if $T=0$. Define the \textit{Chern curvature tensor} of $g$ to be
$$R_{i\bar jk}\,^l=-\partial_{\bar j}\Gamma_{ik}^l.$$
We raise and lower indices by using metric $g$. The Chern-Ricci curvature is defined by
$$R_{i\bar j}=g^{k\bar l}R_{i\bar j k\bar l}=-\partial_i \partial_{\bar j}\log \det g.$$
Note that if $g$ is not \K, then $R_\ijb$ may not equal to $g^{k\bar l}R_{ k\bar li\bar j}$
\begin{lma}
The commutation formulas for the Chern curvature are given by
\begin{align*}
[\nabla_i,\nabla_{\bar j}]X^l=R_{i\bar j k}\,^l X^k,\quad\quad [\nabla_i,\nabla_{\bar j}]a_k=-R_{i\bar j k}\,^l a_l;\\
[\nabla_i,\nabla_{\bar j}]X^{\bar l}=-R_{i\bar j}\,^{\bar k}\,_l X^{\bar l},\quad\quad [\nabla_i,\nabla_{\bar j}]a_k=R_{i\bar j}\,^{\bar k}\,_{\bar l} a_{\bar k}.
\end{align*}
\end{lma}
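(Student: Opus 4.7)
The statement is a direct computation in local holomorphic coordinates, analogous to the standard Levi-Civita Ricci identity but simpler because the Chern connection's only nonzero Christoffel symbols are of type $\Gamma_{ij}^k$ (with holomorphic upper and lower indices) and their complex conjugates. My plan is to verify the first identity in detail, then indicate how the remaining three follow by the same computation with obvious modifications.

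First, for $[\nabla_i,\nabla_{\bar j}]X^l$: since the Chern Christoffels have no components with barred lower indices, $\nabla_{\bar j}$ acts on a $(1,0)$-index as $\partial_{\bar j}$, and $\nabla_i$ acts on a $(0,1)$-index as $\partial_i$. Treating $\nabla_{\bar j}X^l$ as a mixed $(1,1)$-tensor and using the defining formulas, I would compute
\begin{equation*}
\nabla_i\nabla_{\bar j}X^l=\partial_i\partial_{\bar j}X^l+\Gamma_{im}^l\,\partial_{\bar j}X^m,
\end{equation*}
and
\begin{equation*}
\nabla_{\bar j}\nabla_iX^l=\partial_{\bar j}\bigl(\partial_iX^l+\Gamma_{ik}^lX^k\bigr)=\partial_{\bar j}\partial_iX^l+\Gamma_{ik}^l\partial_{\bar j}X^k+(\partial_{\bar j}\Gamma_{ik}^l)X^k.
\end{equation*}
Subtracting, the $\partial_i\partial_{\bar j}$ and $\Gamma\,\partial X$ terms cancel, leaving
\begin{equation*}
[\nabla_i,\nabla_{\bar j}]X^l=-(\partial_{\bar j}\Gamma_{ik}^l)X^k=R_{i\bar jk}{}^lX^k,
\end{equation*}
by the definition of the Chern curvature.

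For the $(1,0)$-covector identity $[\nabla_i,\nabla_{\bar j}]a_k=-R_{i\bar jk}{}^la_l$, I would run exactly the same computation using $\nabla_ia_k=\partial_ia_k-\Gamma_{ik}^la_l$ and $\nabla_{\bar j}a_k=\partial_{\bar j}a_k$; the sign change arises because $\Gamma$ appears with opposite sign on covariant indices. For the two antiholomorphic identities (involving $X^{\bar l}$ and $a_{\bar k}$), I would either repeat the computation using the conjugate coefficients $\overline{\Gamma_{ij}^k}$ and the identification $\overline{R_{i\bar jk}{}^l}$, or simply complex-conjugate the first two identities and relabel indices; one should also double-check the index placement in the statement, which appears to contain minor typographical slips (free versus summed barred indices) that can be resolved during writeup.

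This lemma has no real obstacle: it is pure bookkeeping. The only step that requires care is tracking which type of index each factor carries, so that $\nabla$ is expanded with the correct Christoffel contribution (nonzero only on holomorphic upper and covariant indices, zero on barred ones). Hence the proof will consist of the one displayed calculation above together with a remark that the three remaining identities follow identically.
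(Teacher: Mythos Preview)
Your computation is correct and is exactly the standard verification of the Ricci identity for the Chern connection; the paper itself does not supply a proof of this lemma but merely records it as a known formula with references to \cite{TosattiWeinkove2015,ShermanWeinkove2013}. Your remark about apparent typographical slips in the barred-index identities is also apt and worth keeping in the writeup.
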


When $g$ is not K\"ahler, the Bianchi identities maybe fail. The failure can be measured by the torsion tensor.

\begin{lma}\label{l-Chern-connection-1}
In a holomorphic local coordinates, let $T_{ij\bar k}=g_{p\bar k}\Gamma_{ij}^p$,  we have
\begin{align*}
R_{i\bar jk\bar l}-R_{k\bar ji\bar l}&=-\nabla_{\bar j}T_{ik\bar l},\\
R_{i\bar jk\bar l}-R_{i\bar lk\bar j}&=-\nabla_{i}T_{\bar j\bar lk},\\
R_{i\bar jk\bar l}-R_{k\bar li\bar j}&=-\nabla_{\bar j}T_{ik\bar l}-\nabla_kT_{\bar j\bar li}=-\nabla_iT_{\bar j\bar lk}-\nabla_{\bar l}T_{ik\bar j},\\
\nabla_pR_{i\bar jk\bar l}-\nabla_iR_{p\bar jk\bar l}&=-T_{pi}^rR_{r\bar jk\bar l},\\
\nabla_{\bar q}R_{i\bar jk\bar l}-\nabla_{\bar j}R_{i\bar qk\bar l}&=-T_{\bar q\bar j}^{\bar s}R_{i\bar sk\bar l}.
\end{align*}
\end{lma}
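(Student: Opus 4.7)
My plan is to prove each identity by direct computation from the definitions, exploiting three core facts about the Chern connection: (a) $\nabla g = 0$, (b) for any tensor $S$, $\nabla_{\bar j}S$ and $\nabla_j S$ act as ordinary partial derivatives on indices of conjugate type (so e.g.\ on a tensor with only holomorphic upper/lower indices, $\nabla_{\bar j}$ is just $\partial_{\bar j}$), and (c) $R_{i\bar j k}{}^l=-\partial_{\bar j}\Gamma_{ik}^l$ with $\Gamma_{ik}^s=g^{s\bar q}\partial_i g_{k\bar q}$.

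For the first identity I will just compute: since $\nabla g=0$, $\nabla_{\bar j}T_{ik\bar l}=g_{p\bar l}\nabla_{\bar j}T_{ik}^{\,p}=g_{p\bar l}\,\partial_{\bar j}(\Gamma_{ik}^p-\Gamma_{ki}^p)$, whereas $R_{i\bar jk\bar l}-R_{k\bar ji\bar l}=-g_{p\bar l}\,\partial_{\bar j}(\Gamma_{ik}^p-\Gamma_{ki}^p)$ by the defining formula, giving the claim. The second identity follows from the first by complex conjugation together with the reality relation $\overline{R_{i\bar jk\bar l}}=R_{j\bar i l\bar k}$ and $\overline{T_{ik\bar l}}=T_{\bar i\bar k l}$; after relabeling indices this is exactly the second identity. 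The third identity is then obtained by telescoping: writing $R_{i\bar jk\bar l}-R_{k\bar li\bar j}$ as either $(R_{i\bar jk\bar l}-R_{k\bar ji\bar l})+(R_{k\bar ji\bar l}-R_{k\bar li\bar j})$ or $(R_{i\bar jk\bar l}-R_{i\bar lk\bar j})+(R_{i\bar lk\bar j}-R_{k\bar li\bar j})$ and applying the first two identities yields the two expressions on the right.

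For the last two identities, which are the twisted second Bianchi identity, I will expand $\nabla_p R_{i\bar jk}{}^s-\nabla_i R_{p\bar jk}{}^s$ using the Chern covariant derivative formula; this produces one antisymmetric term in $(p,i)$ acting on $\Gamma_{pi}^m-\Gamma_{ip}^m=T_{pi}^m$, which is precisely $-T_{pi}^m R_{m\bar jk}{}^s$, together with the remaining collection of terms
\[
-\partial_{\bar j}\!\left[\partial_p\Gamma_{ik}^s-\partial_i\Gamma_{pk}^s+\Gamma_{pm}^s\Gamma_{ik}^m-\Gamma_{im}^s\Gamma_{pk}^m\right].
\]
The key observation, which I expect to be the only nontrivial point, is that the expression in brackets vanishes identically: using $\partial_p \Gamma_{ik}^s=-\Gamma_{pn}^s\Gamma_{ik}^n+g^{s\bar q}\partial_p\partial_i g_{k\bar q}$ (obtained from $\partial_p g^{s\bar q}=-g^{s\bar m}(\partial_p g_{n\bar m})g^{n\bar q}$), the bracket collapses to $g^{s\bar q}(\partial_p\partial_i-\partial_i\partial_p)g_{k\bar q}=0$. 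Contracting with $g_{s\bar l}$ and using $\nabla g=0$ gives the fourth identity, and complex conjugation (or the analogous computation with $\bar q,\bar j$ in place of $p,i$) gives the fifth.

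The main obstacle will be bookkeeping in the $\nabla_p R_{i\bar jk}{}^s$ expansion and correctly identifying the vanishing of the $(2,0)$-part $\partial_p\Gamma_{ik}^s-\partial_i\Gamma_{pk}^s+\Gamma\Gamma$-terms for the Chern connection; once this holomorphic-holomorphic curvature component is shown to be zero, the twisted Bianchi identities reduce to a single torsion term and the proof is complete.
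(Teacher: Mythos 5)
Your proposal is correct: the direct computation from $R_{i\bar j k}{}^l=-\partial_{\bar j}\Gamma_{ik}^l$ for the first identity, conjugation plus telescoping for the second and third, and the expansion of $\nabla_pR_{i\bar jk}{}^s-\nabla_iR_{p\bar jk}{}^s$ with the vanishing of the $(2,0)$-part $\partial_p\Gamma_{ik}^s-\partial_i\Gamma_{pk}^s+\Gamma_{pm}^s\Gamma_{ik}^m-\Gamma_{im}^s\Gamma_{pk}^m=g^{s\bar q}(\partial_p\partial_i-\partial_i\partial_p)g_{k\bar q}=0$ all check out. The paper itself gives no proof of this lemma, referring instead to Tosatti--Weinkove and Sherman--Weinkove, where these identities are verified by essentially the same coordinate computation, so your route coincides with the standard one.
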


The next lemma gives some relations between the Riemannian connection and the Chern connection.

\begin{lma}\label{l-Chern-connection-2} Let $(M^n,g)$ be a Hermitian metric and $\nabla$ be the Chern connection and let $T$ be the torsion.
\begin{enumerate}
   \item [(i)] Suppose $|T|_g, |\nabla T|_g$ are bounded. Then the Riemannian curvature is bounded if and only if the Chern curvature is bounded.
   \item  [(ii)] Let $f$ be a smooth function. Then
\bee
f_{;i\bar j}=f_{i\bar j}-\frac12 g^{k\bar l}g_{p\bar j}T_{il}^p-\frac12 g^{k\bar l}g_{i\bar p}\ol{T_{jl}^p}.
\eee
in a holomorphic local coordinates where $f_{;\ijb}$ is the Hessian with respect to the Riemannian metric.
 \end{enumerate}

\end{lma}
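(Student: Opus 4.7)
The strategy is to prove (ii) by a direct computation of the Levi-Civita Christoffel symbols in holomorphic coordinates, and then to deduce (i) from the fact that the Chern connection and the Levi-Civita connection differ by a tensor that is polynomial in the torsion.

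For (ii), I would start from the general identity $f_{;\alpha\beta} = \partial_\alpha \partial_\beta f - (\Gamma^D)_{\alpha\beta}^\gamma \partial_\gamma f$, where $\Gamma^D$ denotes the Christoffel symbols of the Levi-Civita connection $D$ and Greek indices range over both holomorphic and antiholomorphic types. Since the Chern symbols satisfy $\Gamma^\nabla_{i\bar j}{}^k = 0 = \Gamma^\nabla_{i\bar j}{}^{\bar k}$, one already has $f_{i\bar j} = \partial_i \partial_{\bar j} f$, so the discrepancy between the Riemannian Hessian and the Chern Hessian is localized in the mixed symbols $(\Gamma^D)_{i\bar j}{}^k$ and $(\Gamma^D)_{i\bar j}{}^{\bar k}$. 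These are uniquely determined by metric compatibility $Dg=0$ and the torsion-free property $(\Gamma^D)_{\alpha\beta}^\gamma = (\Gamma^D)_{\beta\alpha}^\gamma$: writing out $D_i g_{j\bar k} = D_{\bar j} g_{i\bar k} = 0$ together with these symmetries gives a small linear system whose antisymmetric part reproduces exactly the Chern torsion $T_{ij}^k = \Gamma^\nabla_{ij}{}^k - \Gamma^\nabla_{ji}{}^k$. Substituting back yields the stated formula for $f_{;i\bar j}$.

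For (i), I would introduce the difference tensor $Q := D - \nabla$. By the calculation for (ii), extended to all index types, the components of $Q$ are universal linear combinations of the torsion components contracted with $g$ and $g^{-1}$, so $|Q|_g \le c(n)|T|_g$. Moreover, since $\nabla$ and $D$ differ by $Q$, we have $|DQ|_g \le |\nabla Q|_g + c(n)|Q|_g^2 \le c(n)(|\nabla T|_g + |T|_g^2)$. The standard identity relating the curvatures of two connections differing by a tensor then reads, schematically,
$$
R^D - R^\nabla = \nabla Q + Q \ast Q,
$$
where $\ast$ denotes a contraction bounded pointwise by the metric, so $|R^D - R^\nabla|_g \le C(n)(|\nabla T|_g + |T|_g^2)$. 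Under the standing hypothesis that $|T|_g$ and $|\nabla T|_g$ are bounded, the boundedness of $|R^D|_g$ and of $|R^\nabla|_g$ are therefore equivalent.

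The main obstacle is of a purely bookkeeping nature, namely solving the $Dg = 0$ linear system (together with the symmetry $(\Gamma^D)_{\alpha\beta}^\gamma = (\Gamma^D)_{\beta\alpha}^\gamma$) to isolate the mixed Levi-Civita symbols and recognize the antisymmetric piece as the Chern torsion; once that explicit expression for $Q$ is in hand, (ii) is immediate and (i) is formal from the curvature-difference identity.
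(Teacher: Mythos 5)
Your argument is correct, and it is essentially the only proof available: the paper does not prove this lemma at all, but simply records it in the appendix with a citation to Tosatti--Weinkove and Sherman--Weinkove, where the verification is exactly the direct computation you outline (determine the mixed Levi--Civita symbols from metric compatibility and symmetry, identify the antisymmetric part with the Chern torsion, then compare curvatures of two connections on the same bundle). Two points are worth making explicit when you write it out. First, your computation actually yields $f_{;i\bar j}=f_{i\bar j}-\tfrac12 g^{k\bar l}g_{p\bar j}T_{ik}^p\,f_{\bar l}-\tfrac12 g^{k\bar l}g_{i\bar p}\overline{T_{jl}^p}\,f_k$, i.e.\ the torsion terms must be contracted against $\partial f$ and $\bar\partial f$; as printed in the lemma these factors are missing (the right-hand side carries a free index), so your derivation corrects a typo rather than reproducing the display verbatim. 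Second, for (i) you should say that the Chern connection is regarded as a connection on the (complexified) tangent bundle so that $Q=D-\nabla$ is a genuine tensor, and note that the curvature-difference identity also contains a term of the form $Q(T^{\nabla}(X,Y),\cdot)$ coming from $d^{\nabla}Q$; since $|Q|_g\le c(n)|T|_g$ this term is again quadratic in the torsion, so the estimate $|R^D-R^{\nabla}|_g\le C(n)\bigl(|\nabla T|_g+|T|_g^2\bigr)$ and hence the equivalence of boundedness hold exactly as you claim (using also that the full covariant derivative $\nabla T$ controls $\nabla\overline{T}$ by conjugation).
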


\section{conformal change}

\begin{lma}\label{l-conformal-a} Let $g$ be a Hermitian metric on a complex manifold. Let $h=e^{2F}g$. Then $h$ is also Hermitian. Let $\nabla,\wt \nabla$ be the Chern connections;  $\Gamma, \wt \Gamma$ be the coefficients of Chern connections; $T,\wt T$ be the torsions; $R, \wt R$ be the curvatures of $g, h$ of respectively. In a local holomorphic coordinate neighborhood:
\begin{enumerate}
  \item [(i)] $
\wt\Gamma_{ij}^k=\Gamma_{ij}^k+2F_i\delta_j^k.
$
  \item [(ii)] $\wt T_{ij}^k=T_{ij}^k+2F_i\delta_j^k-2F_j\delta_i^k$. $\wt \nabla_{\bar l}\wt T_{ij}^k=\nabla_{\bar l}T_{ij}^k+2F_{i\bar l}\delta_j^k-2F_{j\bar l}\delta_i^k$.
  \item [(iii)] $\wt R_{r\bar si}^j=R_{r\bar si}^j-2F_{r\bar s}\delta_i^k$; $ \wt R_{k\bar l i\bar j}= e^{2F}\lf(R_{k\bar l i\bar j}-2g_{i\bar j}F_{k\bar l}\ri).$
      \item[(iv)] $\wt R_{k\bar l}=R_{k\bar l}-2nF_{k\bar l}$.

\end{enumerate}
\end{lma}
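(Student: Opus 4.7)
The plan is to verify each formula by direct computation in local holomorphic coordinates, starting from the definition of the Chern connection coefficients $\Gamma_{ij}^k = g^{k\bar l}\p_i g_{j\bar l}$ and propagating the effect of the conformal factor $e^{2F}$ through the definitions of torsion, curvature and Ricci curvature. None of the steps requires any essential idea beyond bookkeeping, so the proposal is really an outline of the order of derivation.

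First I would prove (i) by substituting $h_{j\bar l}=e^{2F}g_{j\bar l}$ into $\wt\Gamma_{ij}^k=h^{k\bar l}\p_i h_{j\bar l}$, using $h^{k\bar l}=e^{-2F}g^{k\bar l}$. The product rule gives one term $2F_i\delta_j^k$ (from differentiating $e^{2F}$) plus the original $\Gamma_{ij}^k$. Part (ii) then follows immediately: the torsion formula is obtained by antisymmetrizing (i) in $i,j$, and the formula for $\wt\nabla_{\bar l}\wt T_{ij}^k$ follows because, for the Chern connection, covariant differentiation in $\bar l$ of a tensor with only unbarred lower and upper indices is just $\p_{\bar l}$, so we simply apply $\p_{\bar l}$ to both sides of the torsion formula.

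For (iii), I would apply the definition $R_{i\bar j k}{}^l=-\p_{\bar j}\Gamma_{ik}^l$ to $\wt\Gamma$: differentiating the extra term $2F_r\delta_i^j$ in (i) produces $-2F_{r\bar s}\delta_i^j$ (correcting the apparent typo $\delta_i^k$ in the statement), giving the first identity. Lowering the index with $h_{r\bar j}=e^{2F}g_{r\bar j}$ gives the second identity. Part (iv) is the cleanest: since $\det h=e^{2nF}\det g$, one simply differentiates $\log\det h=\log\det g+2nF$ twice to obtain $\wt R_{k\bar l}=R_{k\bar l}-2nF_{k\bar l}$.

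There is no real obstacle here; the only mildly delicate point is making sure that in (ii) one correctly identifies $\wt\nabla_{\bar l}$ acting on $\wt T_{ij}^k$ as just $\p_{\bar l}$ (no connection coefficients enter because all free indices are unbarred on top and unbarred on the bottom, and the Chern connection has vanishing $(0,1)$-part of the connection on the holomorphic tangent bundle). Once this is noted, the remainder is straightforward algebra with the conformal factor.
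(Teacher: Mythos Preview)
Your proposal is correct and complete: each step is the standard direct computation in local holomorphic coordinates, and you have identified the one point that needs a word of justification (that $\wt\nabla_{\bar l}$ acting on a tensor with only unbarred indices reduces to $\p_{\bar l}$). The paper itself states this lemma in the appendix without proof, treating it as routine, so your outline is exactly the intended verification; there is nothing to compare.
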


\end{document}